\newcommand{\PreserveBackslash}[1]{\let\temp=\\#1\let\\=\temp}
\newcolumntype{C}[1]{>{\PreserveBackslash\centering}p{#1}}
\newcolumntype{R}[1]{>{\PreserveBackslash\raggedleft}p{#1}}
\newcolumntype{L}[1]{>{\PreserveBackslash\raggedright}p{#1}}
\renewcommand{\setminus}{{\smallsetminus}}
\newcommand{\cp}[1]{\vcenter{\hbox{#1}}}
\newtheorem{theorem}{Theorem}[section]
\newtheorem{lemma}[theorem]{Lemma}
\newtheorem{proposition}[theorem]{Proposition}
\newtheorem{definition}[theorem]{Definition}
\newtheorem{corollary}[theorem]{Corollary}
\newtheorem{conjecture}[theorem]{Conjecture}
\theoremstyle{remark}
\newtheorem{remark}[theorem]{Remark}
\theoremstyle{remark}
\newtheorem{example}[theorem]{Example}
\numberwithin{equation}{section}
\begin{document}
\title{\bf Discrete Fourier transforms, quantum $6j$-symbols and deeply truncated tetrahedra}

\author{Giulio Belletti and Tian Yang}

\date{}

\maketitle

\begin{abstract}  The asymptotic behavior of quantum $6j$-symbols is closely related to the volume of truncated hyperideal tetrahedra\,\cite{C}, and plays a central  role in understanding the asymptotics of the Turaev-Viro invariants of $3$-manifolds. In this paper, we propose a conjecture relating the asymptotics of the discrete Fourier transforms of quantum $6j$-symbols on one hand, and the volume of deeply truncated tetrahedra of various types on the other. As supporting evidence, we prove the conjecture in the case that the dihedral angles are sufficiently small, and provide numerical calculations in the case that the dihedral angles are relatively big. A key observation is a relationship between quantum $6j$-symbols and the co-volume function of deeply truncated tetrahedra, which is of interest in its own right and has important applications\,\cite{WY2, Ya}. More ambitiously,  we extend the conjecture to the discrete Fourier transforms of the Yokota invariants of planar graphs and volume of deeply truncated polyhedra, and provide supporting evidence.
\end{abstract}


\section{Introduction}

Quantum $6j$-symbols are the main building blocks of the Turaev-Viro invariants of $3$-manifolds\,\cite{TV}; the asymptotic behavior of the former plays a central role in understanding the asymptotics of the latter\,\cite{CY, BEL, BDKY}. In \cite{C}, it is proved that the exponential growth rate of the quantum $6j$-symbols is closely related to the volume of truncated hyperideal tetrahedra. In this paper, we aim to study the discrete Fourier transforms of quantum $6j$-symbols and the relationship between their asymptotic behavior and the volume of deeply truncated tetrahedra of various types.

To be precise, let $(I,J)$ be a partition of $\{1,\dots,6\},$ and let $\Delta$ be a deeply truncated tetrahedron of type $(I,J),$ ie., $\{e_i\}_{i\in I}$ is the set of edges of deep truncation. (See Section \ref{tetra} for more details.)

For a $6$-tuple $(b_I, a_J)=((b_i)_{i\in I},(a_j)_{j\in J})$ of integers in $\{0,\dots, r-2\},$   let $\mathrm{\widehat {Y}}_r\big(b_I; a_J\big)$ be the discrete Fourier transform of the Yokota invariant of the trivalent graph $\cp{\includegraphics[width=0.5cm]{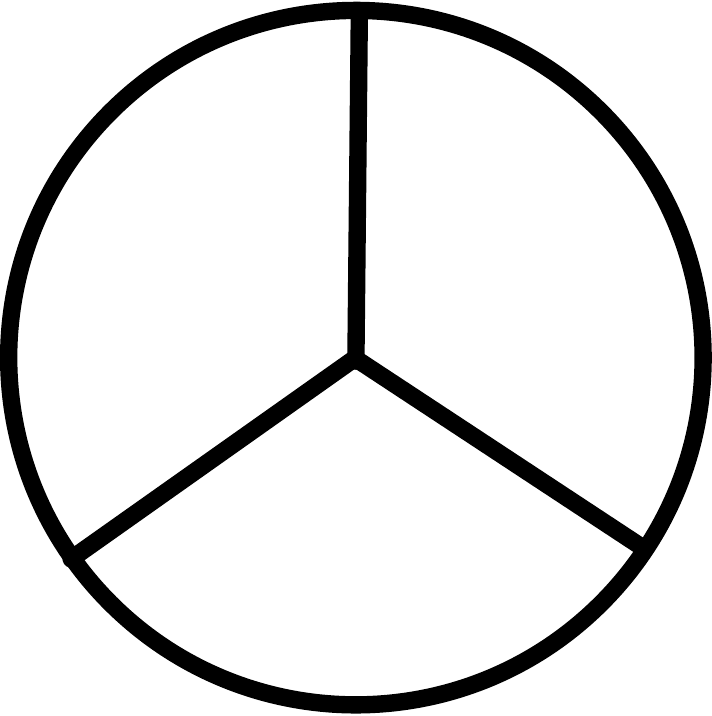}}$ with respect to $(b_I,a_J),$ ie.,
$$\mathrm{ \widehat {Y}}_r\big(b_I; a_J\big)=\sum_{(a_i)_{i\in I}}\prod_{i\in I} \mathrm{H}(a_i,b_i)\bigg|\begin{matrix}
a_1  & a_2 & a_3\\
   a_4 & a_5 & a_6
  \end{matrix}\bigg|^2$$
where the sum is over all multi-integers $(a_i)_{i\in I}$ in $\{0,\dots, r-2\}$ so that the triples $(a_1, a_2, a_3),$ $(a_1, a_5, a_6),$ $(a_2, a_4, a_6)$ and $(a_3, a_4, a_5)$  are $r$-admissible, 
$$\mathrm{H}(a_i,b_i)=(-1)^{a_i+b_i}\frac{q^{(a_i+1)(b_i+1)}-q^{-(a_i+1)(b_i+1)}}{q-q^{-1}},$$
and $\bigg|\begin{matrix} a_{1} & a_{2}  & a_{3} \\  a_{4}  &  a_{5}  &  a_{6}  \end{matrix} \bigg|$
is the quantum $6j$-symbol of the $6$-tuple $(a_{1},\dots,{a_{6}}).$ 
(See Sections \ref{6jsymbols}, \ref{yok} and \ref{pdft} for more details.)

\begin{conjecture}\label{conj} Suppose $\Delta(\theta_I;\theta_J)$ is a deeply truncated tetrahedron  of type $(I,J)$ with $\theta_I=\{\theta_i\}_{i\in I}$ the set of dihedral angles at the edges of deep truncation and  $\theta_J=\{\theta_j\}_{j\in J}$ the set of dihedral angles at the regular edges. Let $\{(b_I^{(r)}, a_J^{(r)})\}$ be a sequence of $6$-tuples  with
$$\theta_i=\Big|\pi-\lim_{r\to \infty}\frac{2\pi b_i^{(r)}}{r}\Big|$$
for $i\in I,$ and 
$$\theta_j=\Big|\pi-\lim_{r\to \infty}\frac{2\pi a_j^{(r)}}{r}\Big|$$ 
for $j\in J.$ Then evaluated at the root of unity $q=e^{\frac{2\pi \sqrt{-1}}{r}}$ and as $r$ varies over all positive odd integers,
$$\lim_{r\to \infty}\frac{2\pi}{r}\log \mathrm{\widehat {Y}}_r\big(b_I^{(r)};a_J^{(r)}\big)=2\rm{Vol}(\Delta(\theta_I;\theta_J)).$$
\end{conjecture}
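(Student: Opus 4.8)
The plan is to expand every quantum $6j$-symbol through its Racah sum formula, treat $\widehat{\mathrm{Y}}_r(b_I^{(r)};a_J^{(r)})$ as an explicit multidimensional sum of ratios of quantum factorials, run the saddle point (potential function) method on it, and identify the extremal value with $2\mathrm{Vol}(\Delta(\theta_I;\theta_J))$ using the announced relationship between quantum $6j$-symbols and the co-volume function of deeply truncated tetrahedra. Observe first that the case $I=\varnothing$ is exactly Costantino's theorem\,\cite{C}: there is no Fourier sum, $\widehat{\mathrm{Y}}_r$ is a single $|6j|^2$, and $\Delta(\varnothing;\theta_J)$ is the truncated hyperideal tetrahedron with dihedral angles $\theta_J$. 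For general $I$, I would write $\mathrm H(a_i,b_i)$ in terms of $q$-powers and each quantum factorial through the Faddeev quantum dilogarithm, so that $\widehat{\mathrm Y}_r$ becomes $\sum e^{\frac{r}{2\pi}\Phi_r(\mathbf u)}$ up to subexponential factors, the sum running over a lattice polytope whose continuous variables $\mathbf u$ are the rescaled Racah indices of the two $6j$-factors together with the rescaled truncation parameters $x_i=\tfrac{2\pi a_i}{r}$, $i\in I$, and with $\Phi_r\to\Phi$ uniformly on compact subsets of the interior. The structural point is that $\mathrm H(a_i,b_i)$, whose modulus is only $O(r)$, nonetheless contributes a term linear in $x_i$ to the potential: after cancelling the trivial $(-1)^{a_i}$ signs, the residual, $a_i$-dependent sign of $\mathrm H(a_i,b_i)$ is $\sin\!\big(\tfrac{r}{2\pi}x_i\theta_i+O(1)\big)$, so one splits each $\mathrm H$ into two exponentials and gets $2^{|I|}$ sums with potentials $\Phi_\epsilon=\Phi_0+i\sum_{i\in I}\epsilon_i\theta_i x_i$, $\epsilon\in\{\pm1\}^{I}$, where $\Phi_0$ is the potential governing $|6j|^2$. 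Thus the $\theta_i$ enter as the momenta conjugate to the $x_i$, and the discrete Fourier transform is a partial Legendre transform in the pairs $(x_i,\theta_i)$.

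Next I would apply Poisson summation to replace each of the $2^{|I|}$ sums by an integral over a box, with controlled error, then deform the integration cycle into complex space so that it passes through the critical point $\mathbf u^\ast$ of $\Phi_\epsilon$, and estimate by multidimensional steepest descent; this yields $\tfrac{2\pi}{r}\log\widehat{\mathrm Y}_r\to\max_{\epsilon}\mathrm{Re}\,\Phi_\epsilon(\mathbf u^\ast)$. A key feature — and the reason the answer depends on $\theta_I$ at all — is that the crude bound $\tfrac{2\pi}{r}\log|\widehat{\mathrm Y}_r|\le\max_{\mathbf u\ \mathrm{real}}\mathrm{Re}\,\Phi_0+o(1)$ from the triangle inequality is \emph{not} sharp: the rapid oscillation of $\prod_{i\in I}\mathrm H(a_i,b_i)$ causes genuine cancellation, so the true rate $\mathrm{Re}\,\Phi_\epsilon(\mathbf u^\ast)$ lies strictly below it and varies with $\theta_I$. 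The hypothesis that the $\theta$'s are sufficiently small is used throughout this step: it keeps the complex critical point $\mathbf u^\ast$ close to the real maximum of $\Phi_0$, lets the cycle be deformed through $\mathbf u^\ast$ without crossing the poles of the quantum dilogarithm, makes the Hessian of $\Phi_\epsilon$ at $\mathbf u^\ast$ non-degenerate, singles out one dominant branch $\epsilon$, and keeps every proper face of the polytope (where $\Phi_0$ degenerates into a potential for $6j$-symbols with collapsed edges) strictly subdominant.

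To identify $\mathrm{Re}\,\Phi_\epsilon(\mathbf u^\ast)$ with $2\mathrm{Vol}(\Delta(\theta_I;\theta_J))$, I would invoke the key observation relating $6j$-symbols to the co-volume function. Evaluating $\Phi_0$ at its critical point in the Racah variables gives, by Costantino's computation, twice the volume of a truncated tetrahedron with the dihedral angles determined by $(x_I,\theta_J)$; the critical equations in the remaining variables $x_I$, namely $\partial\Phi_0/\partial x_i=-i\epsilon_i\theta_i$, together with the Schläfli formula then force $(x_I,\theta_J)$ to be the (analytically continued) data of a deeply truncated tetrahedron of type $(I,J)$ with dihedral angle $\theta_i$ at the truncation edge $e_i$, and the explicit linear term $i\sum_{i\in I}\epsilon_i\theta_i x_i$ supplied by $\mathrm H$ is exactly the correction converting the co-volume of $\Delta(\theta_I;\theta_J)$ into its volume — so $\mathrm{Re}\,\Phi_\epsilon(\mathbf u^\ast)=2\mathrm{Vol}(\Delta(\theta_I;\theta_J))$. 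Smallness of the angles guarantees that $\Delta(\theta_I;\theta_J)$ exists, is non-degenerate, and is the solution of the critical equations selected by the steepest-descent cycle. The matching lower bound is then a stationary-phase estimate on the dominant branch: summing a $C\sqrt r$-neighbourhood of $\mathbf u^\ast$, where the non-degenerate Hessian produces a Gaussian of definite sign and size $e^{\frac{r}{2\pi}\mathrm{Re}\,\Phi_\epsilon(\mathbf u^\ast)}$ up to a subexponential factor, and bounding the complement by strict concavity of $\mathrm{Re}\,\Phi_\epsilon$ transverse to the stationary manifold, completes the proof.

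The step I expect to be the genuine obstacle is the simultaneous control of (i) the contour deformation through the complex saddle without crossing poles or picking up competing critical points, together with the proof that a single branch $\epsilon$ strictly dominates all others and all boundary strata of the polytope, and (ii) the absence of destructive interference in the leading Laplace term; all of this genuinely fails once the dihedral angles grow — the deeply truncated tetrahedron degenerates, critical points of comparable or larger real part appear, and the oscillating signs can annihilate the naive dominant term — which is precisely why the theorem is established only for sufficiently small angles, with only numerical evidence offered beyond that range. A secondary, more technical nuisance is upgrading Costantino's asymptotic analysis so that it is uniform in the auxiliary parameters $b_I$ and uniform up to the boundary of the admissibility polytope, which requires sharpening the standard quantum-dilogarithm estimates.
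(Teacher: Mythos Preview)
First, a framing remark: the statement you were handed is a \emph{conjecture}; the paper does not prove it in general but only under the extra hypothesis that all dihedral angles are sufficiently small (Theorem~\ref{main}). You clearly understood this, since your argument repeatedly invokes smallness of the $\theta$'s and ends by saying the method breaks down beyond that range. So what you have outlined is a proof of Theorem~\ref{main}, not of Conjecture~\ref{conj}, and on that reading your strategy matches the paper's almost exactly: rewrite $\widehat{\mathrm Y}_r$ via the Racah sum and the quantum dilogarithm, split each $\mathrm H(a_i,b_i)$ into two exponentials to obtain $2^{|I|}$ potentials $\mathcal W^{\epsilon_I}$, apply Poisson summation, run the saddle point method on the leading Fourier coefficient, and identify the critical value with $2\mathrm{Vol}(\Delta(\theta_I;\theta_J))$ through the co-volume function and Schl\"afli.

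There is, however, one point where your picture diverges from what actually happens and would cause trouble if you tried to carry it out. You write that the small-angle hypothesis ``singles out one dominant branch $\epsilon$'' and later that ``a single branch $\epsilon$ strictly dominates all others''. This is not the case. In the paper, \emph{every} sign pattern $\epsilon_I\in\{1,-1\}^I$ yields a critical point $z^{\epsilon_I}$ with the \emph{same} critical value $4\pi^2+4\sqrt{-1}\,\mathrm{Vol}(\Delta(\theta_I;\theta_J))$ (Proposition~\ref{crit}); the $2^{|I|}$ branches all contribute at the same exponential rate. The danger is therefore not that subdominant branches pollute the estimate, but that the $2^{|I|}$ leading contributions could cancel one another at the level of prefactors. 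The paper handles this with Corollary~\ref{5.8}: at $\beta_i=\alpha_j=\pi$ all $z^{\epsilon_I}$ coincide, all $\mathcal W^{\epsilon_I}$ are the same function, so the prefactors are identical and their sum is manifestly nonzero; continuity then gives non-cancellation for small angles. Your item~(ii) (``absence of destructive interference'') gestures at this, but the mechanism you describe --- one branch dominating --- is the wrong one, and if you tried to prove strict domination you would fail because it is false.

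A minor technical comment: your description of how $\mathrm H(a_i,b_i)$ feeds into the potential is slightly garbled. The relevant contribution is the bilinear term $-2\epsilon_i(\alpha_i-\pi)(\beta_i-\pi)$ in $\mathcal W^{\epsilon_I}$ (see Proposition~\ref{computation}), coming from $(a_i-\tfrac{r}{2})(b_i-\tfrac{r}{2})$ in the exponent of $\mathrm H$, not a term linear in $x_i$ with coefficient $\theta_i$. The critical-point equation $\partial\mathcal W^{\epsilon_I}/\partial\alpha_i=0$ then reads $\partial U/\partial\alpha_i=\epsilon_i(\beta_i-\pi)$, and it is the combination of Theorem~\ref{co-vol} (the co-volume identity) with Lemma~\ref{Sch} that forces $\alpha_i=\pi+\epsilon_i\mu_i\sqrt{-1}\,l_i$ and turns the critical value into the volume --- the linear $\theta_il_i$ terms from the co-volume cancel exactly against the bilinear term evaluated at the saddle. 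Your Legendre-transform intuition is morally right, but the algebra runs through the imaginary direction, not the real one.
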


As a convincing supporting evidence, we prove the following main result of this paper.

\begin{theorem}\label{main}
For any $(I,J),$ there exists an $\epsilon>0$ such that if all the dihedral angles of  $\Delta(\theta_I;\theta_J)$ are  less than $\epsilon,$ then Conjecture \ref{conj} is true.
\end{theorem}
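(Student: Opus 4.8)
The plan is to treat $\widehat{\mathrm Y}_r\big(b_I^{(r)};a_J^{(r)}\big)$ as an exponential sum in the summation variables $(a_i)_{i\in I}$ and to estimate it by steepest descent, with the asymptotic expansion of the quantum $6j$-symbol from \cite{C} as input and, as the main new ingredient, the relationship between quantum $6j$-symbols and the co-volume function of deeply truncated tetrahedra advertised in the introduction. \emph{Step 1 (unpacking the discrete Fourier transform).} With $q=e^{2\pi\sqrt{-1}/r}$ one has $\mathrm H(a_i,b_i)=(-1)^{a_i+b_i}\sin\!\big(\tfrac{2\pi(a_i+1)(b_i+1)}{r}\big)\big/\sin\!\big(\tfrac{2\pi}{r}\big)$, so apart from the sub-exponential prefactor $\sin(2\pi/r)^{-|I|}$, expanding each sine into two exponentials writes $\prod_{i\in I}\mathrm H(a_i,b_i)$ as a sum over $\varepsilon\in\{\pm1\}^{I}$ of $\pm\prod_{i\in I}(-1)^{a_i}e^{\varepsilon_i 2\pi\sqrt{-1}(a_i+1)(b_i^{(r)}+1)/r}$, each of which is, as a function of $(a_i)_{i\in I}$, a pure exponential whose frequency in $a_i$ tends modulo $2\pi$ to $\pm\theta_i$ by the hypothesis relating $b_i^{(r)}$ to $\theta_i$. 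Thus $\widehat{\mathrm Y}_r$ is, up to sub-exponential factors, a sum of $2^{|I|}$ pieces, each an exponential sum over $(a_i)_{i\in I}$ of a pure exponential in the $a_i$ times the squared quantum $6j$-symbol.

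\emph{Step 2 (inserting the asymptotics of the $6j$-symbol and recognizing the co-volume).} By \cite{C}, in the relevant range of colorings the square of the quantum $6j$-symbol has an asymptotic expansion $(\text{sub-exponential})\cdot e^{\frac{r}{2\pi}\mathcal C(x_I;x_J)}$, where $x_k$ is the length-type variable attached to $a_k$ and $\mathcal C$ is (the holomorphic extension of) a co-volume function of the associated generalized hyperbolic tetrahedron. The content of the key observation is that, with the $J$-data $x_J$ fixed and the signs $\varepsilon$ chosen appropriately, the resulting holomorphic potential $W_\varepsilon(x_I)=\mathcal C(x_I;x_J)+\sqrt{-1}\sum_{i\in I}(\pm\theta_i)\,x_i+\mathrm{const}$ is exactly the co-volume potential of the deeply truncated tetrahedron of type $(I,J)$ — this is what forces the combinatorial type $\Delta(\theta_I;\theta_J)$ into the answer. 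Combining Steps 1 and 2, each of the $2^{|I|}$ pieces is an exponential sum $\sum_{(a_i)}e^{\frac{r}{2\pi}W_\varepsilon(x_I)}\cdot(\text{sub-exponential})$.

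\emph{Step 3 (steepest descent and identification of the answer).} First I would reduce each piece to an integral (Poisson summation / Euler--Maclaurin) and estimate it by deforming onto a steepest descent contour through the critical point $x_I^\ast$ of $W_\varepsilon$. The critical equations $\nabla_{x_I}W_\varepsilon=0$, after separating real and imaginary parts, are precisely the cosine-law / Schläfli relations cutting out the deeply truncated tetrahedron $\Delta(\theta_I;\theta_J)$ inside the space of generalized tetrahedra — here one uses that $\partial_{x_i}\mathcal C=\tfrac12\times(\text{dihedral angle at }e_i)$ by the Schläfli identity for the co-volume — so that $x_I^\ast$ is the edge-length datum of $\Delta(\theta_I;\theta_J)$. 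Evaluating and taking the real part, a Legendre-transform computation ($\mathrm{Cov}=\mathrm{Vol}+\tfrac12\sum_{\text{edges}}\ell\,\theta$, with the linear terms contributed by the $\mathrm H$-factors and the angle-dependent part of $\mathcal C$ combining to produce the coefficient, the factor $2$ ultimately reflecting the square in $\big|\,\cdot\,\big|^2$) gives $\mathrm{Re}\,W_\varepsilon(x_I^\ast)=2\,\mathrm{Vol}(\Delta(\theta_I;\theta_J))$. The remaining choices of $\varepsilon$ give strictly smaller real part, so neither they nor any cross terms affect the leading exponential order; this delivers both the upper bound — note that a naive term-by-term estimate is \emph{not} enough, since the discrete Fourier transform genuinely relies on the oscillatory cancellation among the $\mathrm H$-factors to cut the growth rate down from $\max_{x_I}\mathrm{Re}\,\mathcal C$ to the saddle value — and the matching lower bound for $\tfrac{2\pi}{r}\log\widehat{\mathrm Y}_r$.

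\emph{Role of the hypothesis and the main obstacle.} The small-angle assumption is what makes Step 3 rigorous: when every dihedral angle lies below a threshold $\epsilon$ depending only on $(I,J)$, the deeply truncated tetrahedron $\Delta(\theta_I;\theta_J)$ exists as a nondegenerate compact hyperbolic polyhedron depending smoothly on the angles, the critical point $x_I^\ast$ lies in the interior of the summation region and inside the domain of validity of the $6j$-expansion where $W_\varepsilon$ is holomorphic, the Hessian of $W_\varepsilon$ at $x_I^\ast$ is nondegenerate, and the steepest descent contour can be deformed without meeting competing critical points. I expect the principal difficulty to be exactly this uniform steepest descent analysis of an exponential \emph{sum}: controlling the error terms of the $6j$-asymptotics uniformly over all relevant colorings, establishing the nondegeneracy and contour-deformation statements, and verifying that the sign and branch choices made in Steps 1--2 are mutually consistent and single out the deeply truncated tetrahedron of type $(I,J)$ rather than another generalized tetrahedron. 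The identification $\mathrm{Re}\,W_\varepsilon(x_I^\ast)=2\,\mathrm{Vol}(\Delta(\theta_I;\theta_J))$, via Schläfli and Legendre duality for the co-volume of deeply truncated tetrahedra, is the other point that will require genuine care.
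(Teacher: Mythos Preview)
Your overall strategy matches the paper's: expand each $\mathrm H(a_i,b_i)$ into two exponentials, recognize the resulting potential as a co-volume function shifted by a linear term, pass to an integral via Poisson summation, apply steepest descent, and identify the critical value with $2\,\mathrm{Vol}(\Delta(\theta_I;\theta_J))$ via Schl\"afli/Legendre duality (this last identification is the paper's Theorem~\ref{co-vol} and Proposition~\ref{crit}). Two points, however, need correction or comment.

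\medskip
\noindent\textbf{A genuine gap.} Your claim that ``the remaining choices of $\varepsilon$ give strictly smaller real part'' is false, and this matters. In fact \emph{every} one of the $2^{|I|}$ sign choices $\epsilon_I\in\{\pm1\}^I$ produces a critical point $z^{\epsilon_I}$ with exactly the same critical value $4\pi^2+4\sqrt{-1}\,\mathrm{Vol}(\Delta(\theta_I;\theta_J))$; see Proposition~\ref{crit}, where the critical point is $\big(\pi+\epsilon_i\mu_i\sqrt{-1}l_i\big)_{i\in I}$ and the sign $\epsilon_i$ merely reflects the $\alpha_i\mapsto 2\pi-\alpha_i$ symmetry of the $6j$-symbol, so that $\epsilon_i^2=\mu_i^2=1$ kills all sign dependence in the value. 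Thus at leading exponential order all $2^{|I|}$ pieces are present, and you must rule out the possibility that their Gaussian prefactors sum to zero. The paper does this by a continuity argument (Corollary~\ref{5.8}): at $\beta_i=\alpha_j=\pi$ the $2^{|I|}$ potentials and critical points coincide, all prefactors are equal and nonzero, hence their sum is nonzero, and this persists for small $\epsilon$. Without this step you do not obtain the lower bound.

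\medskip
\noindent\textbf{A methodological difference.} You propose to first insert the asymptotics of the $6j$-symbol from \cite{C} (thereby already optimizing out the internal summation index $k$) and then do steepest descent in the $|I|$ variables $(a_i)_{i\in I}$. The paper instead writes the $6j$-symbol \emph{exactly} via the quantum dilogarithm $\varphi_r$ (Proposition~\ref{6jqd}), keeps the two internal variables $k_1,k_2$ coming from the squared $6j$-symbol, and runs the saddle-point analysis in all $|I|+2$ variables simultaneously on the function $\mathcal W^{\epsilon_I}(\alpha_I,\xi_1,\xi_2)$. This buys two things: it avoids having to control the error in the $6j$-asymptotics uniformly over all relevant colorings (a difficulty you correctly flagged), and it makes the Poisson-summation bookkeeping clean, since every nonzero Fourier mode $(m_I,n_1,n_2)\neq(0,\dots,0)$ is shown to be exponentially suppressed by a single small contour shift (Proposition~\ref{other}). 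Your two-stage route could likely be made to work, but the single-stage version is what keeps the analysis tractable here.
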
 

Furthermore, we provide additional numerical evidence for Conjecture \ref{conj} with relatively big dihedral angles in Section \ref{appendix}.

An analogous Volume Conjecture for deeply truncated tetrahedra with one edge of deep truncation was suggested (and has been verified numerically in a handful of cases) in \cite[Section 5, Conjecture 3]{KM2}. Notice however that the quantities inside the logarithm in Conjecture \ref{conj} and in \cite{KM2}, albeit similar-looking, are different.
\\

We also propose a more ambitious conjecture for the discrete Fourier transforms of the Yokota invariants of planar graphs which are the $1$-skeleton of deeply truncated polyhedra. (See Sections \ref{dtp}, \ref{yok} and \ref{pdft} for more details.)

\begin{conjecture}\label{Ambconj}
 Let $\Gamma\subset S^3$ be a planar graph and let $P$ be a deeply truncated polyhedron with $1$-skeleton $\Gamma.$ Let $\{\theta_i\}_{i\in I}$ be the angles at the edges of deep truncation and let $\{\theta_j\}_{j\in J}$ be the dihedral angles of $P$ at the regular edges. Let $\{(b^{(r)}_I,a^{(r)}_J)\}$ be a sequence of  colorings of $\Gamma$ such that
 $$\theta_i=\Big|\pi-\lim_{r\to \infty}\frac{2\pi b_i^{(r)}}{r}\Big|$$
for $i\in I$ and 
$$\theta_j=\Big|\pi-\lim_{r\to \infty}\frac{2\pi a_j^{(r)}}{r}\Big|$$ 
for $j\in J,$ and let  $\mathrm{\widehat {Y}}_r\big(\Gamma, b_I^{(r)};a_J^{(r)}\big)$ be the discrete Fourier transform of the Yokota invariant of $\Gamma$ with respect to $(b^{(r)}_I,a^{(r)}_J).$
Then as $r$ varies over all positive odd integers,
$$\lim_{r\to \infty}\frac{2\pi}{r}\log \mathrm{\widehat {Y}}_r\big(\Gamma,b_I^{(r)}; a_J^{(r)}\big)=2\rm{Vol}(P).$$

\end{conjecture}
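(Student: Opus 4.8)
The plan is to deduce Conjecture \ref{Ambconj} from Theorem \ref{main} by a cut-and-glue argument: triangulate the deeply truncated polyhedron $P$ into deeply truncated tetrahedra and propagate the tetrahedral asymptotics through the gluing. First I would fix a geometric triangulation $\mathcal T$ of $P$ by deeply truncated tetrahedra $\tau$, chosen so that every deep-truncation edge (carrying a colour $b_i^{(r)}$) and every regular edge (carrying $a_j^{(r)}$) of $\Gamma$ lies on $\partial P$ and hence in a single $\tau$, while the new edges $\mathcal E_{\mathrm{int}}$ introduced by $\mathcal T$ are interior. By standard recoupling theory (equivalently, the gluing formula for Turaev--Viro-type state sums), the Yokota invariant of $\Gamma$ with a fixed admissible colouring expands as a sum over colourings of $\mathcal E_{\mathrm{int}}$ of a product, over the $\tau\in\mathcal T$, of their (squared, as in the tetrahedral case) quantum $6j$-symbols, corrected by explicit quantum-dimension and theta-symbol factors on the edges and $2$-faces of $\mathcal T$. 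Combined with the definition of the discrete Fourier transform this gives
$$\widehat{Y}_r\big(\Gamma, b_I^{(r)}; a_J^{(r)}\big)=\sum_{(a_i)_{i\in I}}\ \sum_{\mathbf c\,\co\,\mathcal E_{\mathrm{int}}\to\{0,\dots,r-2\}}\ \prod_{i\in I}\mathrm H\big(a_i,b_i^{(r)}\big)\ \prod_{\tau\in\mathcal T}\big|(6j)_\tau\big|^{2}\ \prod_{f\subset\mathcal T}\big|\Theta_f\big|^{\pm 2},$$
with the exponents and placement of the normalizing factors dictated by $\mathcal T$, and every admissibility constraint coming from a vertex of $\Gamma$ or of $\mathcal T$.

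Next I would run a multidimensional Laplace/saddle-point analysis on this state sum, parallel to the one behind Theorem \ref{main}. Being a saddle-point argument, the proof of Theorem \ref{main} in fact supplies, for each $\tau$ with small dihedral angles, a full asymptotic expansion of the Fourier-transformed tetrahedral factor of the form $\exp\!\big(\tfrac r{2\pi}\Phi_\tau(\mathbf u_\tau)\big)$ times controlled polynomial corrections, where $\Phi_\tau$ is a holomorphic potential whose unique relevant critical point realizes the deeply truncated hyperbolic tetrahedron $\Delta(\theta_{I_\tau};\theta_{J_\tau})$ and whose critical value is $2\mathrm{Vol}(\Delta(\theta_{I_\tau};\theta_{J_\tau}))$; the theta- and dimension-factors are lower order. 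Summing over $\mathcal E_{\mathrm{int}}$ and over $(a_i)_{i\in I}$, the exponential growth rate of $\widehat Y_r(\Gamma,\cdot)$ equals the maximum of $\mathrm{Re}\sum_\tau\Phi_\tau$ over the rescaled admissible region. The critical-point equations of $\sum_\tau\Phi_\tau$ in the interior variables are exactly the conditions that the $\Delta(\theta_{I_\tau};\theta_{J_\tau})$ have matching internal dihedral angles and total angle $2\pi$ around each interior edge, i.e. that they assemble into a hyperbolic structure on $P$ with the prescribed boundary angles; at such a solution, by additivity of volume under $\mathcal T$ together with the co-volume identity used in the tetrahedral case (Schläfli's formula makes the variation of $\sum_\tau 2\mathrm{Vol}$ in the interior edge lengths vanish at cone angle $2\pi$), the critical value is $2\mathrm{Vol}(P)$, yielding $\tfrac{2\pi}{r}\log\widehat Y_r(\Gamma, b_I^{(r)};a_J^{(r)})\to 2\mathrm{Vol}(P)$.

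The principal obstacle is the geometric existence-and-rigidity input this needs but which, for a general polyhedron, goes beyond the explicit Gram-matrix computation available for a single tetrahedron: one must know that for sufficiently small $\theta_I,\theta_J$ the polyhedron $\Delta(\theta_I;\theta_J)$ exists, is unique and varies smoothly, that the associated critical point of $\sum_\tau\Phi_\tau$ is nondegenerate (so the Hessian of the total potential is nonsingular and the Laplace method produces a genuine Gaussian integral), and that it stays uniformly interior to the admissible region (no boundary contribution, no competing saddle of larger real part). This amounts to a ``deeply truncated'' analogue of the Andreev--Rivin--Bao--Bonahon rigidity theory plus a convexity/nondegeneracy statement for the total volume functional under subdivision, and smallness of the angles is precisely what keeps the truncation hyperplanes in the expected combinatorial position and the saddle interior and simple. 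A secondary, technical obstacle is to make the recoupling decomposition and the interchange of the Fourier and interior sums rigorous for an arbitrary trivalent planar $\Gamma$, including the degenerate admissibility strata and the colourings near the ends of $\{0,\dots,r-2\}$, which must be shown to contribute subexponentially. Granting these, the argument establishes Conjecture \ref{Ambconj} for all sufficiently small dihedral angles, in exact parallel with Theorem \ref{main}; removing the smallness hypothesis, where $P$ can degenerate or the saddle migrate to the boundary, is the remaining open problem, consistent with the numerical nature of the large-angle evidence.
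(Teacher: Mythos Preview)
The statement you are addressing is Conjecture~\ref{Ambconj}, which the paper does \emph{not} prove. There is no proof in the paper to compare against: the authors only assert, in the remark following the conjecture and without supplying an argument, that the techniques behind Theorem~\ref{main} establish the small-angle case for the restricted class of graphs obtained from the tetrahedral graph by iterated triangle moves $\cp{\includegraphics[width=1.5cm]{triangle}}$. Your proposal is therefore not a reconstruction of a known proof but a research programme aimed at the small-angle case of the conjecture for \emph{all} planar graphs, which is strictly more than the paper claims.

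Your outline differs from the paper's hinted route. The paper's suggestion is inductive in the combinatorics of $\Gamma$: each triangle move adds one tetrahedron and one new summation variable, so the potential grows by one copy of $U$ and the saddle-point analysis of Section~\ref{Asy} extends step by step, with the nondegeneracy of the enlarged Hessian checked perturbatively near $(\pi,\dots,\pi)$. Your route instead fixes a geometric triangulation of $P$ into deeply truncated tetrahedra and writes $\mathrm Y_r(\Gamma,\cdot)$ as a state sum over interior-edge colourings. This is more general in scope but introduces two genuine obstacles that the inductive approach sidesteps. First, the recoupling identity you invoke is not the definition of the Yokota invariant (which is via desingularization of vertices, Section~\ref{yok}); you would need to prove that $\mathrm Y_r(\Gamma,\cdot)$ admits the claimed tetrahedral expansion with squared $6j$-symbols and the precise theta/dimension weights, for an arbitrary triangulation of the dual polyhedron. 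Second, and more seriously, Theorem~\ref{main} cannot be applied as a black box to the tetrahedra $\tau$: their interior edges carry summation variables with weights different from the kernel $\mathrm H(a_i,b_i)$, so the per-$\tau$ object is neither the quantity $\widehat{\mathrm Y}_r$ of Theorem~\ref{main} nor the quantity of Conjecture~\ref{conj}. You correctly switch to a global saddle-point analysis of $\sum_\tau\Phi_\tau$, but then the input you need is not Theorem~\ref{main} itself but the internal machinery of Section~\ref{Asy} (Propositions~\ref{crit}--\ref{error}) redone with $|I|+|\mathcal E_{\mathrm{int}}|+2|\mathcal T|$ integration variables, together with a nondegeneracy statement for the global Hessian that, as you note, amounts to an infinitesimal rigidity theorem for deeply truncated polyhedra. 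None of this is in the paper, and the existence of a triangulation of $P$ by \emph{deeply truncated} tetrahedra with the boundary pattern you require is itself not obvious.

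In short: your sketch is a plausible strategy for the small-angle case, you have correctly identified its principal gaps, and it is both broader and further from complete than what the paper asserts; but there is no proof in the paper against which to grade it.
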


\begin{remark} By using the same techniques as in the proof of Theorem \ref{main}, one can prove that Conjecture \ref{Ambconj} is true for all the graphs  obtained from $\cp{\includegraphics[width=0.5cm]{Yokota}}$ by doing a sequence of the following triangle moves $\cp{\includegraphics[width=1.5cm]{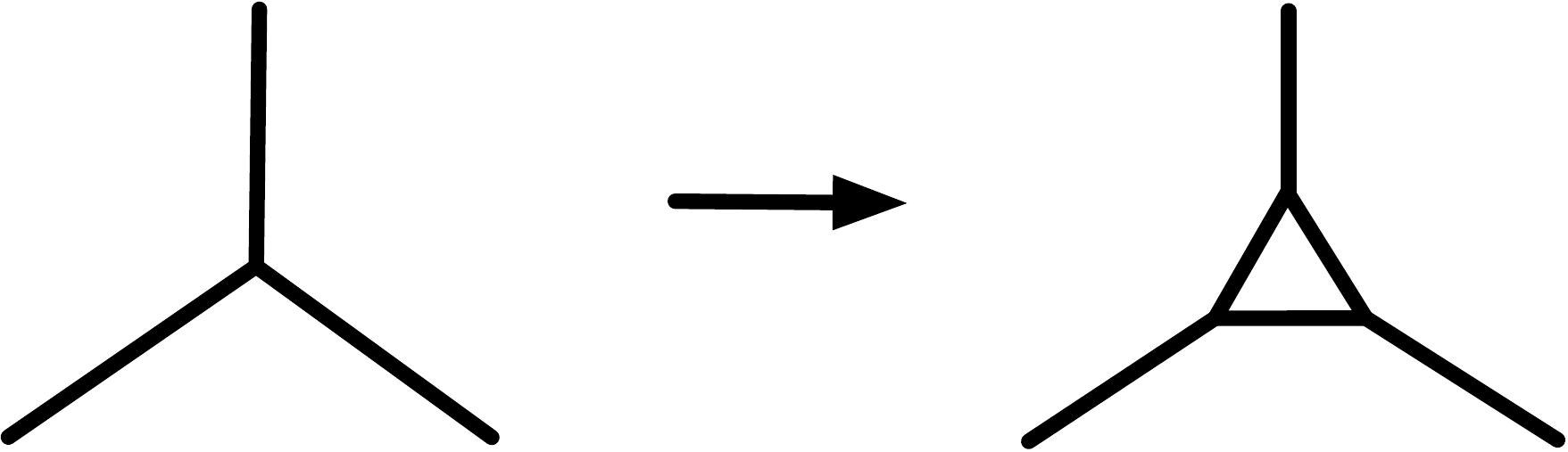}}$ and with sufficiently small dihedral angles, providing supporting evidences to Conjecture \ref{Ambconj}.
\end{remark}

\bigskip

\noindent\textbf{Outline of the proof of Theorem \ref{main}.}  
We follow the guideline of Ohtsuki's method. In Proposition \ref{computation}, we compute the discrete Fourier transform of the Yokota invariants of  the graph $\cp{\includegraphics[width=0.5cm]{Yokota}},$ writing them as a sum of values of a holomorphic function $f_r$ at integer points. The function $f_r$ comes from Faddeev's quantum dilogarithm function. Using Poisson Summation Formula, we in Proposition \ref{Poisson} write the invariants as a sum of the Fourier transforms of $f_r$ computed  in Propositions \ref{4.2}. In Proposition \ref{crit} we show that the critical value of the functions in the leading Fourier transforms has real part the volume of the deeply truncated tetrahedron. The key observation is Theorem \ref{co-vol} relating the asymptotics of quantum $6j$-symbols with  the co-volume function of deeply truncated tetrahedra, which is of interest in its own right. Then we estimate the leading Fourier transforms in Sections \ref{leading} using the Saddle Point Method (Proposition \ref{saddle}). Finally, we estimate the non-leading Fourier transforms and the error term respectively in Sections \ref{ot} and \ref{ee} showing that they are neglectable, and prove Theorem \ref{main} in Section \ref{pf}. We discuss the result in as general a case as possible, and only in the last section we use the assumption that the dihedral angles are sufficiently small.
\\

\noindent\textbf{Acknowledgments.}  The second author would like to thank Francis Bonahon, Zhengwei Liu, Feng Luo and Ka Ho Wong for helpful discussions. The second author is partially supported by NSF Grant DMS-1812008.


\section{Preliminaries}

\subsection{Deeply truncated tetrahedron and co-volume function}\label{tetra}

\begin{definition} A  \emph{ deeply truncated tetrahedron} is a compact hyperbolic polyhedron with faces $H_1,$ $H_2,$ $H_3,$ $H_4,$ $T_1,$ $T_2,$ $T_3,$ and $T_4$ such that
\begin{enumerate}[(1)]
\item For each  $i\in \{1,2,3,4\},$ $T_i\cap H_i=\emptyset.$

\item For each $\{i,j\} \subset \{1,2,3,4\},$ $T_i\cap H_j\neq\emptyset,$ and the dihedral angle between them is always $\frac{\pi}{2}.$

\item For each $\{i,j\} \subset \{1,2,3,4\},$ either $T_i\cap T_j\neq\emptyset$ or $H_i\cap H_j\neq\emptyset,$ but not both.
\end{enumerate}
\end{definition}

From the definition, we see that each face $H_i$ or $T_i$ is one of the following four types: (1) a hyperbolic triangle, (2) a hyperbolic quadrilateral with two right angles, (3) a hyperbolic pentagon with four right angles and (4) a hyperbolic hexagon with six right angles.

We only consider the intersection of $H_i$ and $H_j$ or the intersection of $T_i$ and $T_j$ as the \emph{edge} of the deeply truncated tetrahedron; therefore there are in total six edges. We call an edge between $H_i$ an $H_j$ a \emph{regular edge} and an edge between $T_i$ and $T_j$ an \emph{edge of deep truncation}.

A truncated hyperideal tetrahedron is one example of deeply truncated tetrahedra, where the $H_i$'s are the hexagonal faces, and $T_i$'s are the triangles from truncations (see Figure \ref{hyperideal}). For a truncated hyperideal tetrahedron, there are six regular edges and no edge of deep truncation. The name ``edge of deep truncation'' comes from the idea that if the truncations are ``deep'' enough, then two triangles $T_i$ and $T_j$ coming from truncations may intersect. That is also why the tetrahedron is called ``deeply truncated''. Deeply truncated tetrahedra were first studied by Kolpakov and Murakami in \cite{KM}.

\begin{figure}[htbp]
\centering
\includegraphics[scale=0.3]{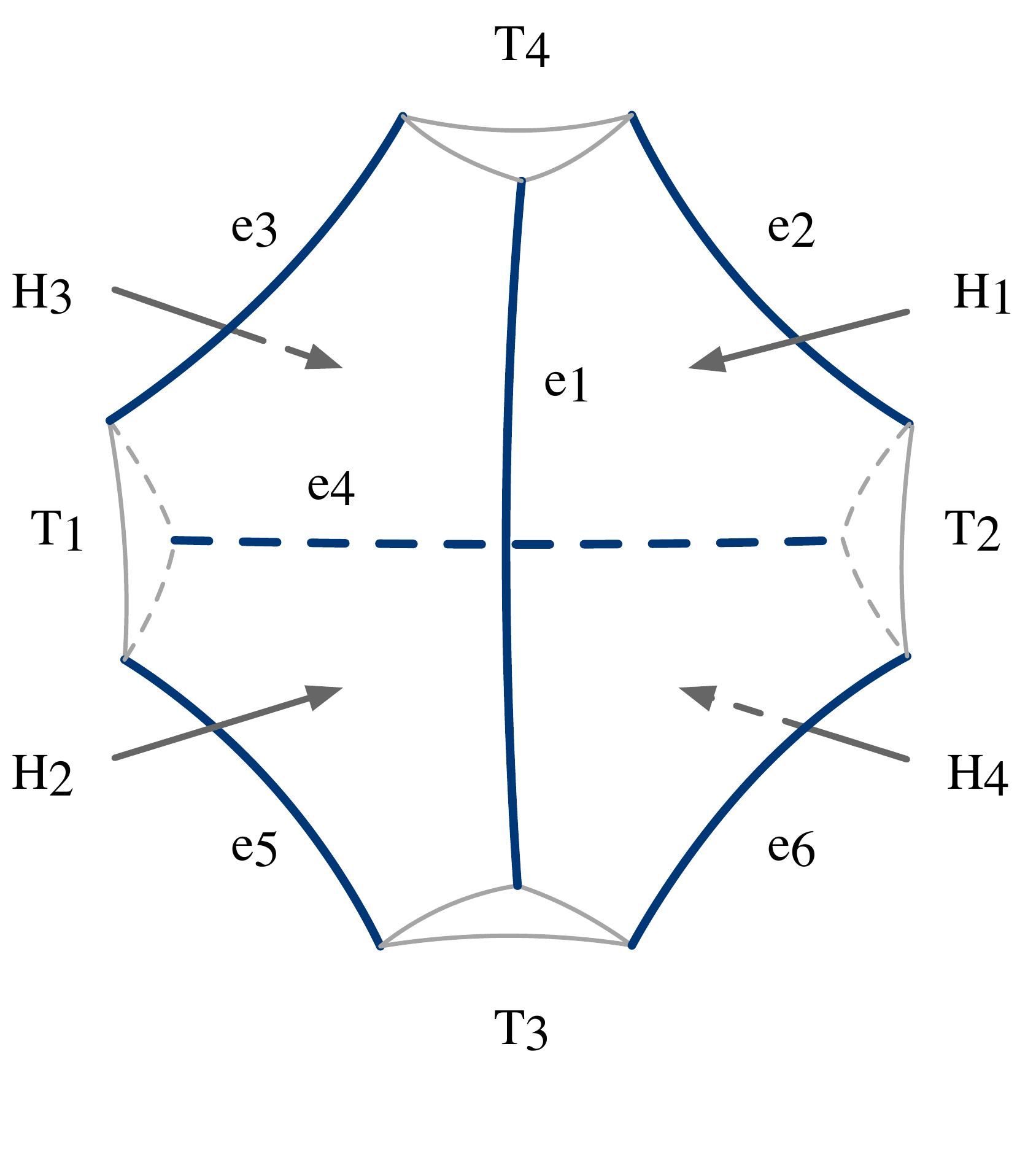}
\caption{Truncated hyperideal tetrahedron}
\label{hyperideal}
\end{figure}

 Let $(I,J)$ be a partition of $\{1,\dots,6\}.$ A deeply truncated tetrahedron $\Delta$ is of type $(I,J)$ if $\{e_i\}_{i\in I}$ is the set of edges of deep truncation. Up to permutation of indices and interchange the role of $H_i$'s and $T_i$'s, all the types of deeply truncated tetrahedra besides the truncated hyperideal tetrahedron are listed in Figure \ref{deep}.

\begin{figure}[htbp]
\centering
\includegraphics[scale=0.25]{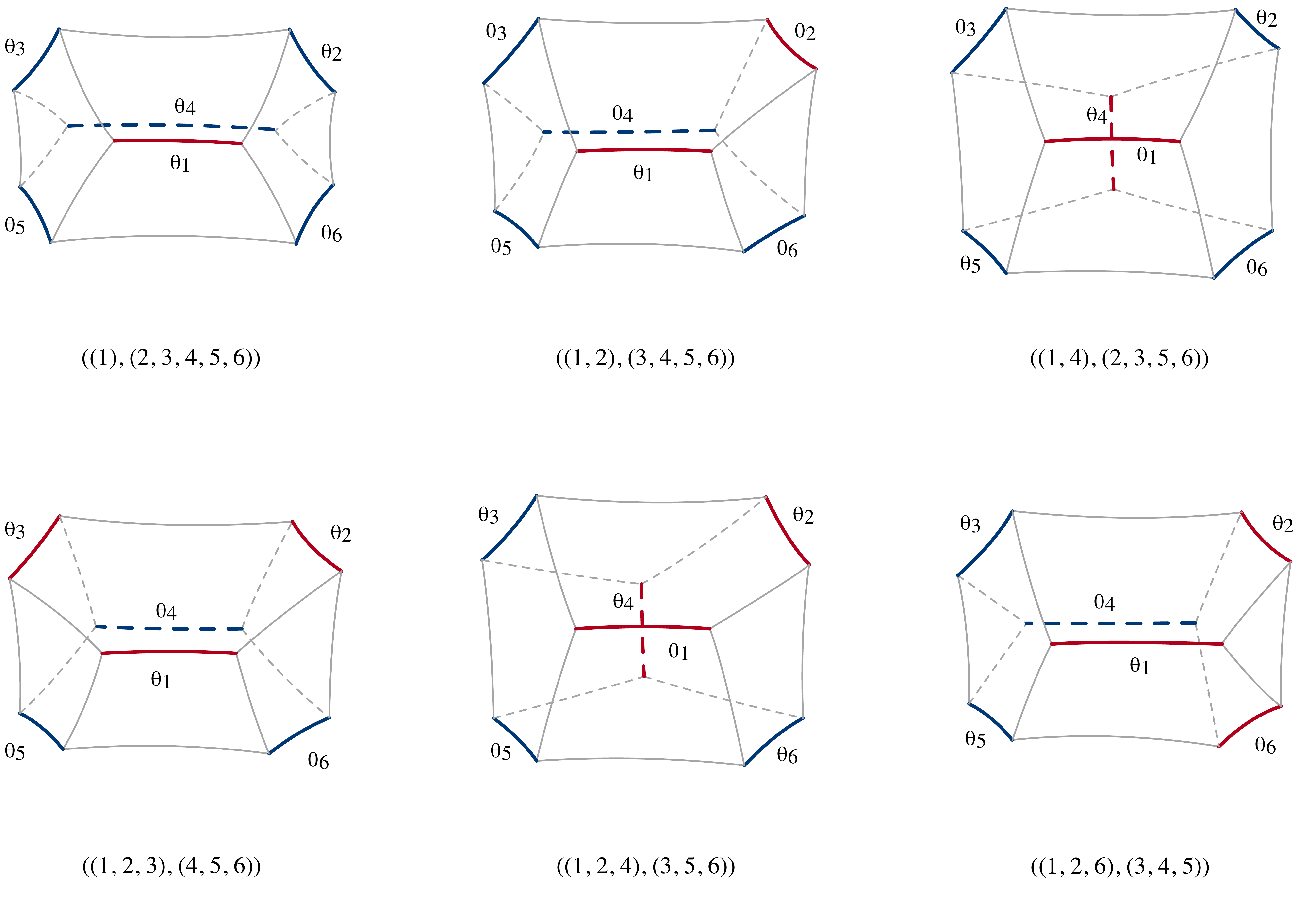}
\caption{Deeply truncated tetrahedra: The red edges are the edges of deep truncation and the blue edges are the regular edges. The dihedral angles at the grey edges are all right angles.}
\label{deep}
\end{figure}

Let $\Delta=\Delta(\theta_I,\theta_J)$ be the deeply truncated tetrahedron of type $(I,J);$ let $\{\theta\}_{i\in I}$ be the  dihedral angles at the edges of deep truncation and let $\{\theta_j\}_{j\in J}$ be the dihedral angles at the regular edges. Let $\{l_i\}_{i\in I}$ be the lengths of the edges of deep truncation. Let $c_i=\cosh l_i$ for $i\in I,$ and let $c_j=\cos\theta_j$ for $j\in J.$ If without loss of generality we assume that $1\in I,$ then by the hyperbolic cosine law of polygons of various types (see \cite[Appendix A]{GL2}) and a direct computation, we have
\begin{equation}\label{cos1}
\cos\theta_1=\frac{c_4+c_2c_6+c_3c_5+(c_2c_5+c_3c_6)c_1-c_4c_1^2}{\sqrt{-1+c_1^2+c_2^2+c_3^2+2c_1c_2c_3}\sqrt{-1+c_1^2+c_5^2+c_6^2+2c_1c_5c_6}}.
\end{equation}
If we consider the following Gram matrix 
\begin{equation*}
\begin{split}
G=\left[
\begin{array}{cccc}
1 & -c_1 & -c_2 & -c_6\\
-c_1& 1 & -c_3 & -c_5\\
-c_2 & -c_3 & 1 & -c_4 \\
-c_6 & -c_5 & -c_4  & 1 \\
 \end{array}\right],
 \end{split}
\end{equation*}
 then (\ref{cos1}) can be written as
\begin{equation}\label{cos}
\cos\theta_1=\frac{G_{34}}{\sqrt{G_{33}G_{44}}}.
\end{equation}

A deeply truncated tetrahedron $\Delta$ of type $(I,J)$ is up to isometry determined by $\{l_i\}_{i\in I}$ and $\{\theta_j\}_{j\in J}.$  In particular, if we fix $\{\theta_j\}_{j\in J},$ then the dihedral angles $\{\theta_i\}_{i\in I}$ at the edges of deep truncation are functions of $\{l_i\}_{i\in I}$ and as a consequence the volume of $\Delta$ is a function of $\{l_i\}_{i\in I}.$

\begin{definition}[\cite{Luo, LY}]\label{cov}
For a fixed $\{\theta_j\}_{j\in J},$ let $\mathrm{Vol}$ and $\{\theta_i\}_{i\in I}$ respectively be the volume of $\Delta$ and the dihedral angles at the edges of deep truncation as functions of $l_I=(l_i)_{i\in I},$ and consider the following co-volume function $\mathrm{Cov}$ defined by
$$\mathrm{Cov}(l_I)=\mathrm{Vol}+\frac{1}{2}\sum_{i\in I}\theta_i\cdot l_i .$$
\end{definition}

The key property of the co-volume function is the following

\begin{lemma}\label{Sch} For $i\in I,$
$$\frac{\partial \mathrm{Cov}}{\partial l_i}=\frac{\theta_i}{2}.$$
\end{lemma}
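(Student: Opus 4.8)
The plan is to derive Lemma \ref{Sch} directly from the Schl\"afli differential formula for compact hyperbolic polyhedra. Recall that for a smooth one-parameter family of compact hyperbolic polyhedra of a fixed combinatorial type, the Schl\"afli formula reads
$$d\mathrm{Vol}=-\frac{1}{2}\sum_{e}\ell_e\,d\theta_e,$$
where the sum is over all edges $e$ of the polyhedron, $\ell_e$ denotes the length of $e$, and $\theta_e$ the dihedral angle along $e$. I would apply this to the family $\Delta(\theta_I,\theta_J)$ with $\theta_J$ held fixed and $l_I$ varying.

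Next I would organize the edges of $\Delta$ into the three classes dictated by the definition of a deeply truncated tetrahedron: the edges of deep truncation $e_i$ with $i\in I$, carrying length $l_i$ and dihedral angle $\theta_i$; the regular edges $e_j$ with $j\in J$, carrying dihedral angle $\theta_j$; and the edges coming from the intersections $T_i\cap H_j$, which by part (2) of the definition all have dihedral angle exactly $\frac{\pi}{2}$. Since we hold $\theta_J$ fixed we have $d\theta_j=0$ for $j\in J$, and since the right-angle edges have constant dihedral angle $\frac{\pi}{2}$ they contribute $d(\frac{\pi}{2})=0$ as well. Hence the Schl\"afli formula collapses to $d\mathrm{Vol}=-\frac{1}{2}\sum_{i\in I}l_i\,d\theta_i$, i.e. $\frac{\partial \mathrm{Vol}}{\partial l_i}=-\frac{1}{2}\sum_{k\in I}l_k\,\frac{\partial \theta_k}{\partial l_i}$ for each $i\in I$. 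Differentiating the definition $\mathrm{Cov}(l_I)=\mathrm{Vol}+\frac{1}{2}\sum_{k\in I}\theta_k l_k$ with respect to $l_i$ gives
$$\frac{\partial \mathrm{Cov}}{\partial l_i}=\frac{\partial \mathrm{Vol}}{\partial l_i}+\frac{1}{2}\theta_i+\frac{1}{2}\sum_{k\in I}l_k\,\frac{\partial \theta_k}{\partial l_i},$$
and substituting the identity above makes the first and last terms cancel, leaving $\frac{\partial \mathrm{Cov}}{\partial l_i}=\frac{\theta_i}{2}$.

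The only genuine point requiring care --- and the step I would expect to be the main obstacle --- is verifying the hypotheses under which Schl\"afli applies: that on the relevant open domain of parameters $l_I$ the polyhedron $\Delta$ stays of constant combinatorial type, and that $\mathrm{Vol}$ and the angles $\theta_i$ depend smoothly (indeed real-analytically) on $l_I$. This smoothness is already implicit in Definition \ref{cov}, and can be made explicit from the Gram-matrix formula \eqref{cos}, which exhibits each $\cos\theta_i$ as an algebraic function of the $c_i=\cosh l_i$ (and of the fixed $c_j=\cos\theta_j$); alternatively one may simply cite the setup of the co-volume function in \cite{Luo, LY}. With that in place the lemma is an immediate consequence of the Schl\"afli formula.
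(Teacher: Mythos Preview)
Your proof is correct and follows essentially the same approach as the paper: both invoke the Schl\"afli formula, expand $\frac{\partial \mathrm{Cov}}{\partial l_i}$ via the chain and product rules, and observe the cancellation that leaves $\frac{\theta_i}{2}$. Your version is slightly more explicit in accounting for why the regular edges ($d\theta_j=0$) and the right-angle edges ($d(\tfrac{\pi}{2})=0$) drop out of the Schl\"afli sum, whereas the paper simply quotes $\frac{\partial\mathrm{Vol}}{\partial\theta_i}=-\frac{l_i}{2}$ and proceeds directly.
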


\begin{proof} By the Schl\"afli formula, we have
$$\frac{\partial \mathrm{Vol}}{\partial \theta_i}=-\frac{l_i}{2}.$$
Then by the chain rule and the product rule, we have
\begin{equation*}
\begin{split}
\frac{\partial \mathrm{Cov}}{\partial l_i}=&\sum_{k\in I}\frac{\partial \mathrm{Vol}}{\partial\theta_k}\cdot\frac{\partial \theta_k}{\partial l_i}+\frac{1}{2}\sum_{k\in I}\frac{\partial}{\partial l_i}\Big(\theta_k\cdot l_k\Big)\\
=&-\sum_{k\in I}\frac{l_k}{2}\cdot \frac{\partial \theta_k}{\partial l_i}+\frac{1}{2}\sum_{k\in I}\cdot\frac{\partial \theta_k}{\partial l_i}\cdot l_k+\frac{\theta_i}{2}=\frac{\theta_i}{2}.
\end{split}
\end{equation*}
\end{proof}

Finally we include a sufficient condition to determine whether a deeply truncated tetrahedron with specified parameters exists.

\begin{proposition}\label{prop:criterion}
Let $(I,J)$ be a partition of $\{1,\dots,6\},$ $\{l_i\}_{i\in I}$ be positive real numbers and $\{\theta_j\}_{j\in J}$ be real numbers in $[0,\pi].$ Let $c_i=\cosh l_i$ for $i\in I$ and $c_j=\cos\theta_j$ for $j\in J,$ and let  \begin{displaymath}
 G=\left[
\begin{array}{cccc}
1 & -c_1 & -c_2 & -c_6\\
-c_1& 1 & -c_3 & -c_5\\
-c_2 & -c_3 & 1 & -c_4 \\
-c_6 & -c_5 & -c_4  & 1 \\
 \end{array}\right]
\end{displaymath}
be the Gram matrix defined as above.
If
\begin{enumerate}[(1)]
 \item $\textrm{Sign}(G)=(3,1),$
 \item $G_{st}>0$ for $s\neq t$ and
 \item $G_{ss}<0,$
\end{enumerate}
then there exists a deeply truncated tetrahedron of type $(I,J)$ with lengths $\{l_i\}_{i\in I}$ of the edges of deep truncation and 
dihedral angles $\{\theta_j\}_{j\in J}$ at the regular edges.
\end{proposition}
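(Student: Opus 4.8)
The plan is to realize $G$ as the Gram matrix of the four face planes of a hyperideal tetrahedron and then truncate its four hyperideal vertices, checking that the resulting compact polyhedron is a deeply truncated tetrahedron of type $(I,J)$ with the prescribed data. Throughout, $G_{ij}$ denotes the $(i,j)$-cofactor of $G$ (equivalently the $(i,j)$-entry of $\mathrm{adj}(G)$), as in (\ref{cos}). Since $G$ is real symmetric with $\mathrm{Sign}(G)=(3,1)$, it is the Gram matrix of a basis $e_1,\dots,e_4$ of the Minkowski space $\mathbb{R}^{3,1}$ (with Lorentzian form $\langle\cdot,\cdot\rangle$ and $\mathbb{H}^3$ its hyperboloid model). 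As each diagonal entry of $G$ equals $1$, every $e_i$ is a unit spacelike vector, hence determines a hyperplane $H_i=e_i^\perp\cap\mathbb{H}^3$ and a half-space $H_i^-=\{x\in\mathbb{H}^3:\langle x,e_i\rangle\le 0\}$; since $e_1,\dots,e_4$ form a basis, any three of them are independent, so no three of the $H_i$ share a geodesic, i.e. the $H_i$ are in general position. Let $\{e^1,\dots,e^4\}$ be the dual basis, with Gram matrix $G^{-1}$; then $v_i:=[e^i]$ (in the projective model) is the unique common point of the three planes $H_j$, $j\neq i$.

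First I would show that $P_0:=\bigcap_{i=1}^4 H_i^-$ is the underlying polyhedron of a hyperideal tetrahedron. Because $\mathrm{Sign}(G)=(3,1)$, we have $\det G<0$; hence hypothesis (3) gives $\langle e^i,e^i\rangle=(G^{-1})_{ii}=G_{ii}/\det G>0$, so every $v_i$ is hyperideal. For nonemptiness of $\mathrm{int}\,P_0$: hypotheses (2) and (3) say $\mathrm{adj}(G)$ has positive off-diagonal and negative diagonal entries, so after dividing by $\det G<0$, the matrix $G^{-1}$ has positive diagonal and negative off-diagonal entries; applying the Perron--Frobenius theorem to $tI-G^{-1}$ for large $t$ produces an eigenvector $v=(v_1,\dots,v_4)$ of $G^{-1}$ for its unique negative eigenvalue with all $v_i>0$. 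Then $w:=-\sum_i v_i e^i$ satisfies $\langle w,w\rangle=v^{\top}G^{-1}v<0$ and $\langle w,e_i\rangle=-v_i<0$ for all $i$, so the point of $\mathbb{H}^3$ represented by $w$ lies in $\mathrm{int}\,P_0$. Thus $P_0$ is a hyperideal tetrahedron with faces $H_1,\dots,H_4$ and hyperideal vertices $v_1,\dots,v_4$; alternatively one may here appeal to the structure theory of hyperideal polyhedra (\cite{KM, Luo, LY}).

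Next I would truncate: for each $i$, cut $P_0$ along the polar hyperplane $T_i=(e^i)^\perp\cap\mathbb{H}^3$ (a genuine plane, as $e^i$ is spacelike), discarding the side containing $v_i$, and let $\Delta$ be the resulting polyhedron, which is compact. From $\langle e^i,e_j\rangle=\delta_{ij}$ one gets $T_i\perp H_j$ for $j\neq i$ and $T_i\cap H_i=\emptyset$, giving conditions (1)--(2) in the definition of a deeply truncated tetrahedron. To identify the combinatorial type, fix a pair $\{a,b\}\subset\{1,2,3,4\}$ with complement $\{a',b'\}$, and let $e_k$ be the edge associated to $\{a,b\}$, so $\langle e_a,e_b\rangle=-c_k$. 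By Jacobi's identity for minors of an adjugate, the principal $\{a',b'\}$-minor of $G^{-1}$ equals $\det\!\big(G[\{a,b\}]\big)/\det G=(1-c_k^2)/\det G$, which (as $\det G<0$) is positive exactly when $|c_k|>1$, i.e. exactly when $k\in I$. Hence $T_{a'}$ and $T_{b'}$ intersect (producing an edge of deep truncation) precisely for $k\in I$, while $H_a$ and $H_b$ intersect (producing a regular edge) precisely for $k\in J$; so $\Delta$ has type $(I,J)$. Finally, for $k\in J$ the dihedral angle of $\Delta$ along $H_a\cap H_b$ is $\arccos(-\langle e_a,e_b\rangle)=\arccos(c_k)=\theta_k$, and for $k\in I$ the orthogonalities $T_{a'},T_{b'}\perp H_a,H_b$ force the edge $T_{a'}\cap T_{b'}$ to be the common perpendicular of the ultraparallel planes $H_a,H_b$, hence of length $\cosh^{-1}|\langle e_a,e_b\rangle|=\cosh^{-1}(c_k)=l_k$. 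This produces the required deeply truncated tetrahedron.

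I expect the real difficulty to be geometric, not algebraic: the signature computation, the Perron--Frobenius step, and the Jacobi minor identity are all routine, but one must still check carefully that $P_0$ is genuinely the nondegenerate underlying polyhedron of a hyperideal tetrahedron, that truncating all four hyperideal vertices yields a \emph{compact} polyhedron, and that $\Delta$ has no incidences or coincidences beyond those read off from the minor computation — and this must be verified uniformly over the finitely many types $(I,J)$ of Figure \ref{deep}. For these points I would lean on the established theory of hyperideal hyperbolic polyhedra rather than reprove it from scratch.
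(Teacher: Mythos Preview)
Your proposal is correct and follows the standard construction of a generalized tetrahedron from its Gram matrix in Minkowski space, which is precisely the content of Ushijima's Theorem~3.2 that the paper invokes; the paper's own proof consists solely of the observation that Ushijima's argument applies verbatim, with condition~(3) added to force all four vertices to be hyperideal. You have simply unpacked that citation---the Perron--Frobenius step to locate an interior point, the Jacobi minor identity to read off the type $(I,J)$, and the identification of edge lengths and dihedral angles are all part of (or routine consequences of) Ushijima's construction---and the residual verifications you flag (compactness after truncation, absence of spurious incidences) are exactly the ``first part of the proof'' of \cite[Theorem~3.2]{U} that the paper says carries over.
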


\begin{proof}
 The first part of the proof in \cite[Theorem 3.2]{U} works verbatim in the case of deeply truncated tetrahedra. Notice that we require that a deeply truncated tetrahedron has hyperideal vertices, while [Ushijima, Theorem 3.2] does not; this accounts for the extra condition $(3)$ (see \cite[Remark 3 after Theorem 3.2]{U}).
\end{proof}

\begin{remark}
The conditions in \cite[Theorem 3.2]{U} are necessary and sufficient. However, if the pair $(s,t)$ gives entry of the Gram matrix corresponding to an edge of deep truncation, then the condition $G_{st}>0$ would imply that the dihedral angle at this edge is less than $\frac{\pi}{2},$ which is not always the case. Hence the conditions in Proposition \ref{prop:criterion} are not necessary in general.
\end{remark}

\subsection{Deeply truncated polyhedra}\label{dtp}

We extend the definition of a deeply truncated tetrahedron to polyhedra of any combinatorial type, which are the objects involved in Conjecture \ref{Ambconj}.

Let $\Gamma\subset S^3$ be a polyhedral graph (that is, a graph that is the $1$-skeleton of a polyhedron), and let $V$ be its set of vertices and $F$ its set of faces.
\begin{definition}
A \emph{deeply truncated polyhedron} with $1$-skeleton $\Gamma$ is a compact hyperbolic polyhedron $P\subset \mathbb{H}^3$ with faces $\{T_v\}_{v\in V}\cup \{H_f\}_{f\in F}$ such that:

\begin{enumerate}[(1)]
 \item $T_v\cap H_f\neq \varnothing$ if and only if $v\in f,$
 \item if $T_v\cap H_f\neq \varnothing,$ then they intersect at a right angle, and
 \item for every edge $e$ of $\Gamma$ with endpoints $v_1,v_2$ and adjacent to faces $f_1,f_2,$ exactly one of $T_{v_1}\cap T_{v_2}$ and $H_{f_1}\cap H_{f_2}$ is non-empty.
\end{enumerate}
The edge $e$ of $\Gamma$  is a \emph{regular edge} if $H_{f_1}\cap H_{f_2}\neq \varnothing;$  otherwise it is an \emph{edge of deep truncation}. In either case, the \emph{dihedral angle} of $P$ at $e$ is the angle between the two intersecting faces. 
\end{definition}

\begin{remark}
 Notice that $P$ by itself is simply a compact hyperbolic polyhedron; to obtain a deeply truncated polyhedron with $1$-skeleton $\Gamma$ we need the additional information of the partition of its faces according to vertices and faces of $\Gamma.$ In particular, $P$ as a simplicial complex in $\mathbb{H}^3$ does \emph{not} have $1$-skeleton $\Gamma;$ by definition $\Gamma$ is the $1$-skeleton of $P$ with the additional information. 
\end{remark}

\begin{example}
A truncated  hyperideal  polyhedron $P$ with $1$-skeleton $\Gamma$\,\cite{BB} is a deeply truncated polyhedron with $1$-skeleton $\Gamma.$ The faces of $P$ give the set $H_F$ while the faces dual to the vertices of $P$ give the set $T_V.$ In this case all edges are regular.
\end{example}

\begin{remark}
 The statement of Proposition \ref{Sch} is true for deeply truncated polyhedra as well; the same proof applies verbatim.
\end{remark}


\subsection{Quantum \texorpdfstring{$6j$}{6j}-symbols}\label{6jsymbols}

Let $r$ be an odd integer and $q$ be an $r$-th root of unity. For the context of this paper we are only interested in the case $q=e^{\frac{2\pi\sqrt{-1}}{r}},$ but the definitions and results in this section work with any choice of $q.$

As is customary we define $[n]=\frac{q^n-q^{-n}}{q-q^{-1}},$ $\{n\}=q^n-q^{-n}$ and the quantum factorial 
$$[n]!=\prod_{k=1}^n[k].$$

A triple $(a_1,a_2,a_3)$ of integers in $\{0,\dots,r-2\}$ is \emph{$r$-admissible} if 
\begin{enumerate}[(1)]
\item  $a_i+a_j-a_k\geqslant 0$ for $\{i,j,k\}=\{1,2,3\}.$
\item $a_1+a_2+a_3\leqslant 2(r-2),$ 
\item $a_1+a_2+a_3$ is even.
\end{enumerate}

For an $r$-admissible triple $(a_1,a_2,a_3),$ define 
$$\Delta(a_1,a_2,a_3)=\sqrt{\frac{[\frac{a_1+a_2-a_3}{2}]![\frac{a_2+a_3-a_1}{2}]![\frac{a_3+a_1-a_2}{2}]!}{[\frac{a_1+a_2+a_3}{2}+1]!}}$$
with the convention that $\sqrt{x}=\sqrt{|x|}\sqrt{-1}$ when the real number $x$ is negative.

A  6-tuple $(a_1,\dots,a_6)$ is \emph{$r$-admissible} if the triples $(a_1,a_2,a_3),$ $(a_1,a_5,a_6),$ $(a_2,a_4,a_6)$ and $(a_3,a_4,a_5)$ are $r$-admissible.

\begin{definition}
The \emph{quantum $6j$-symbol} of an $r$-admissible 6-tuple $(a_1,\dots,a_6)$ is 
\begin{multline*}
\bigg|\begin{matrix} a_1 & a_2 & a_3 \\ a_4 & a_5 & a_6 \end{matrix} \bigg|
= \sqrt{-1}^{-\sum_{i=1}^6a_i}\Delta(a_1,a_2,a_3)\Delta(a_1,a_5,a_6)\Delta(a_2,a_4,a_6)\Delta(a_3,a_4,a_5)\\
\sum_{k=\max \{T_1, T_2, T_3, T_4\}}^{\min\{ Q_1,Q_2,Q_3\}}\frac{(-1)^k[k+1]!}{[k-T_1]![k-T_2]![k-T_3]![k-T_4]![Q_1-k]![Q_2-k]![Q_3-k]!},
\end{multline*}
where $T_1=\frac{a_1+a_2+a_3}{2},$ $T_2=\frac{a_1+a_5+a_6}{2},$ $T_3=\frac{a_2+a_4+a_6}{2}$ and $T_4=\frac{a_3+a_4+a_5}{2},$ $Q_1=\frac{a_1+a_2+a_4+a_5}{2},$ $Q_2=\frac{a_1+a_3+a_4+a_6}{2}$ and $Q_3=\frac{a_2+a_3+a_5+a_6}{2}.$
\end{definition}

Closely related, a triple $(\alpha_1,\alpha_2,\alpha_3)\in [0,2\pi]^3$ is \emph{admissible} if 
\begin{enumerate}[(1)]
\item $\alpha_i+\alpha_j-\alpha_k\geqslant 0$ for $\{i,j,k\}=\{1,2,3\},$
\item $\alpha_i+\alpha_j+\alpha_k\leqslant 4\pi.$
\end{enumerate}
A $6$-tuple $(\alpha_1,\dots,\alpha_6)\in [0,2\pi]^6$ is \emph{admissible} if the triples $\{1,2,3\},$ $\{1,5,6\},$ $\{2,4,6\}$ and $\{3,4,5\}$ are admissible.


\subsection{The Yokota invariant}\label{yok}

In this section we recall the definition of the Yokota invariant, first introduced in \cite{Y}. It is an invariant that extends the Kauffman bracket for trivalent graphs to the case of graphs with vertices of any valence. For the sake of simplicity we only deal with the case of planar graphs with no $1$- or $2$-valent vertices; the general case of framed graphs in closed oriented manifolds is not conceptually more complex.

Let $\Gamma\subset S^3$ be a trivalent planar graph, $a_I$ be a coloring of its edges with elements in $\{0,\dots,r-2\},$ and denote with $\langle \Gamma,a_I\rangle$ the \emph{Kauffman bracket} evaluated at the $r$-th root of unity $q=e^{\frac{2\pi\sqrt{-1}}{r}}$ of the pair $\left(\Gamma,a_I\right)$ (see for example \cite[Section 9]{KL} for a definition). 
 
We say that the coloring $a_I$ is \emph{$r$-admissible} if, whenever $i,j,k\in I$ are the indices of the edges of $\Gamma$ sharing a vertex, the triple $(a_i,a_j,a_k)$ is $r$-admissible. 

\begin{definition}
 A \emph{desingularization} of a planar graph $\Gamma$ with no $1$- and $2$-valent vertices is a graph $\Gamma'$ that coincides with $\Gamma$ outside of a neighborhood of the vertices of $\Gamma,$ and in a neighborhood of each vertex is a planar trivalent tree, as in Figure \ref{fig:desing}.
\end{definition}

\begin{figure}
\centering
 \begin{minipage}{.4\textwidth}\centering \begin{tikzpicture}[scale=0.5]
\centering
\draw[thick] (3,3)--(3,-3);
\draw[thick] (0.5,2)--(5.5,-2);
\draw[thick] (5.5,2)--(0.5,-2);
\end{tikzpicture}
\end{minipage}
$\xrightarrow{\hspace*{1cm}}$
\begin{minipage}{.4\textwidth}
\centering
  \begin{tikzpicture}[scale=0.4]
\centering
\draw[thick] (3,3)--(3,0);
\draw[thick] (3,2)--(3,-4);
\draw[thick] (0.5,2)--(3,0);
\draw[thick] (5.5,2)--(3,1);
\draw[thick](0.5,-3)--(3,-2);
\draw[thick] (3,-1)--(5.5,-3);
\end{tikzpicture}
\end{minipage}
\caption{Desingularization in a neigborhood of a vertex of valence $6$}\label{fig:desing}
\end{figure}
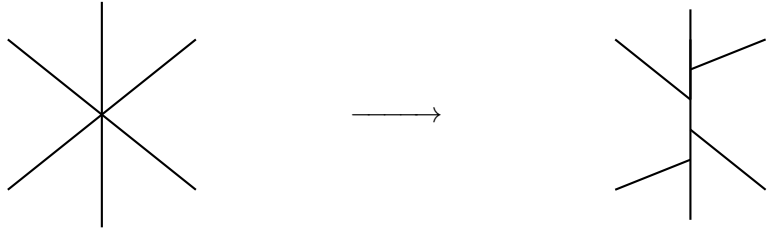

\begin{definition}
 Let $\Gamma\subset S^3$ be a planar graph with no $1$- and $2$-valent vertices and $\Gamma'$ be a desingularization of $\Gamma.$ Let $I$ be the set of edges of $\Gamma$ and $I'$ be the set of edges of $\Gamma'$ so that $I\subset I'$ in a natural way, and let $V'$ be the set of vertices of $\Gamma'.$ Then the $r$-th  \emph{Yokota invariant} of $\left(\Gamma,a_I\right)$ is 
 
 \begin{displaymath}
  \mathrm Y_r(\Gamma,a_I)= \sum_{a_{I'}}\frac{\prod_{i\in I'\setminus I} \langle \cp{\includegraphics[width=0.5cm]{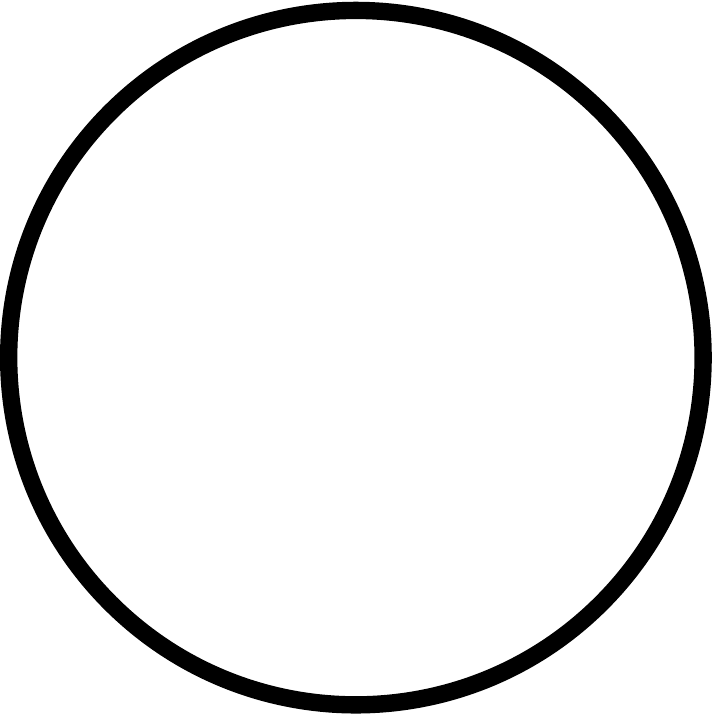}}, a_i\rangle}{\prod_{v\in V'} \langle \cp{\includegraphics[width=0.5cm]{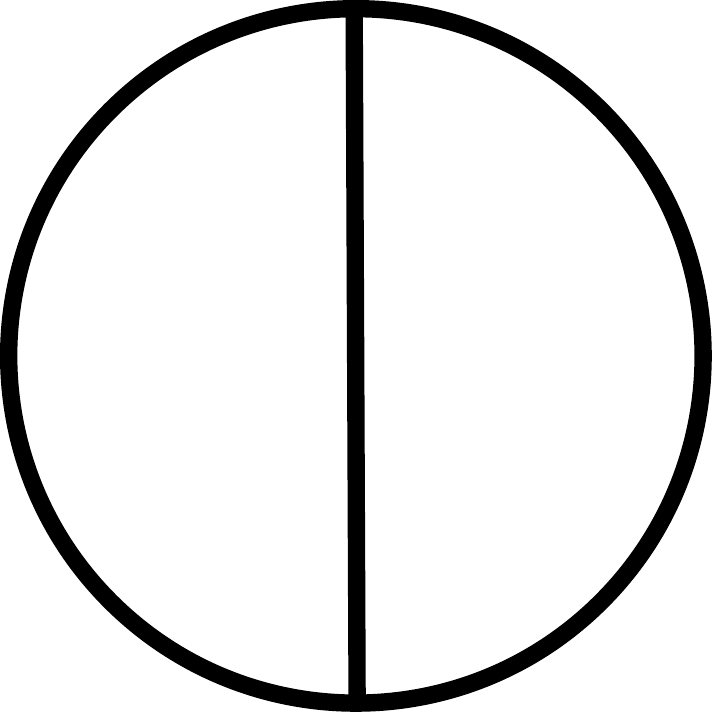}}, (a_{v_1},a_{v_2},a_{v_3})\rangle}\langle \Gamma',a_{I'}\rangle^2,
 \end{displaymath}
where the sum is over all $r$-admissible colorings $a_{I'}$ of $I'$ extending $a_I,$ and $(v_1,v_2,v_3)$ are the indices of the edges of $\Gamma'$ sharing $v.$ In particular, \begin{displaymath}
 \mathrm Y_r(\cp{\includegraphics[width=0.5cm]{Yokota}},(a_1,\dots, a_6))= \bigg|\begin{matrix} a_1 & a_2 & a_3 \\ a_4 & a_5 & a_6 \end{matrix} \bigg|^2.
 \end{displaymath}
\end{definition}

\begin{remark}
If $\Gamma$ has at least a vertex of valence greater than $3,$ then there are many different desingularizations; but $\mathrm Y_r(\Gamma,a_I)$ is independent of the choice of desingularization (\cite[Proposition 4.3]{Y}).
\end{remark}

\subsection{Discrete Fourier transforms}\label{pdft}

The discrete Fourier transforms of the Yokota invariants were introduced in \cite{BAR};  they were later shown to be a particular case of a general construction for modular tensor categories (see \cite[Section 1]{LIU} and references therein). They were used to prove the Turaev-Viro volume conjecture for a certain family of manifolds in \cite{BEL}, using a weak form of the Poisson Summation Formula; later \cite{WY2} established a strong version of the Poisson Summation Formula.

For simplicity we only deal with planar graphs in $S^3;$ and the definition could be extended to any graph in any closed, oriented $3$-manifold without any extra difficulty.

\begin{definition}
 Let $\Gamma$ be a planar graph in $S^3,$ $(I,J)$ be a partition of the edges of $\Gamma,$ and $(b_I, a_J)$ be a coloring of the edges of $\Gamma.$ The \emph{discrete Fourier transform} of the Yokota invariant of $\Gamma$ with respect to  $(b_I,a_J)$ is defined as 
 $$\mathrm{ \widehat {Y}}_r\big(\Gamma, b_I; a_J\big)=\sum_{(a_i)_{i\in I}}\prod_{i\in I} \mathrm{H}(a_i,b_i)\mathrm Y_r(\Gamma,a_I,a_J).$$
\end{definition}

\begin{proposition}[Duality of the Fourier transform of the $6j$-symbol]\label{prop:duality}
 Let $(I,J)$ be any partition of the edges of the tetrahedral graph. Then 
 \begin{displaymath}
  \mathrm{\widehat {Y}}_r(b_I;a_J)=\bigg(\frac{ r}{2\sin^2\big(\frac{2\pi}{r}\big)}\bigg)^{3-\lvert I\rvert} \mathrm{\widehat {Y}}_r(a_J;b_I).
 \end{displaymath}
\end{proposition}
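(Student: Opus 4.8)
The plan is to unfold both sides using the definition of $\mathrm{\widehat Y}_r$ and the identity $\mathrm Y_r(\cp{\includegraphics[width=0.5cm]{Yokota}},(a_1,\dots,a_6))=\big|\begin{smallmatrix} a_1 & a_2 & a_3\\ a_4 & a_5 & a_6\end{smallmatrix}\big|^2$, so that both $\mathrm{\widehat Y}_r(b_I;a_J)$ and $\mathrm{\widehat Y}_r(a_J;b_I)$ become explicit sums of products of $\mathrm H$-kernels against a squared $6j$-symbol. The statement then reduces to a single clean fact about the kernel $\mathrm H$: it is, up to the scalar $\tfrac{r}{2\sin^2(2\pi/r)}$, an involutive transform on functions of a color variable supported on $\{0,\dots,r-2\}$. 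Concretely, I would first establish the orthogonality-type relation
\begin{equation*}
\sum_{c=0}^{r-2}\mathrm H(a,c)\,\mathrm H(c,b)=\frac{r}{2\sin^2\!\big(\tfrac{2\pi}{r}\big)}\,\delta_{a,b},
\end{equation*}
valid for $a,b\in\{0,\dots,r-2\}$ at $q=e^{2\pi\sqrt{-1}/r}$ with $r$ odd. This is a finite geometric-sum computation: writing $\mathrm H(a,c)=(-1)^{a+c}\frac{q^{(a+1)(c+1)}-q^{-(a+1)(c+1)}}{q-q^{-1}}$ and expanding the product, one gets a sum over $c$ of terms $q^{\pm(a+1)(c+1)\pm(b+1)(c+1)}$ times $(-1)^{\cdots}$, and summing the resulting roots of unity over a full period (using that $r$ is odd, so $(-1)^c=q^{rc}$ can be absorbed and $c\mapsto c+1$ ranges over a complete residue system) collapses everything to the Kronecker delta with the stated normalization.

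With this in hand, the proposition is a change-of-order-of-summation argument. Starting from $\mathrm{\widehat Y}_r(a_J;b_I)=\sum_{(b_i')_{i\in I}}\prod_{i\in I}\mathrm H(b_i',a_i)\cdots$ — wait, more carefully: one side sums the squared $6j$-symbol against $\prod_{i\in I}\mathrm H(a_i,b_i)$ over the $I$-colors, while the dual side sums against $\prod_{j\in J}\mathrm H(a_j,b_j)$ over the $J$-colors. To pass from $\mathrm{\widehat Y}_r(b_I;a_J)$ to $\mathrm{\widehat Y}_r(a_J;b_I)$ one inserts, for each $j\in J$, a resolution of the identity $\sum_{c_j}\mathrm H(a_j,c_j)\mathrm H(c_j,b_j)=\frac{r}{2\sin^2(2\pi/r)}\delta_{a_j,b_j}$ — equivalently, one recognizes that applying the $\mathrm H$-transform in the $I$-variables and then in the $J$-variables to the function $\big|\begin{smallmatrix} a_1 & a_2 & a_3\\ a_4 & a_5 & a_6\end{smallmatrix}\big|^2$ recovers the function back up to the total scalar $\big(\frac{r}{2\sin^2(2\pi/r)}\big)^{|I\cup J|}=\big(\frac{r}{2\sin^2(2\pi/r)}\big)^{6}$, so the difference between transforming the $I$-set and transforming the complementary $J$-set is precisely the factor $\big(\frac{r}{2\sin^2(2\pi/r)}\big)^{6-2|I|}=\big(\frac{r}{2\sin^2(2\pi/r)}\big)^{2(3-|I|)}$; taking into account that $\mathrm{\widehat Y}_r$ already carries one power of the transform, one lands on the exponent $3-|I|$ as claimed. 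I would organize this so that the bookkeeping of powers is done once and cleanly, rather than tracking signs repeatedly.

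The main obstacle I anticipate is twofold. First, the admissibility constraints: the squared $6j$-symbol is only defined (or nonzero) on $r$-admissible $6$-tuples, so when I insert $\sum_{c_j}\mathrm H(a_j,c_j)\mathrm H(c_j,b_j)$ I must be sure the inner sum over $c_j$ may be extended to all of $\{0,\dots,r-2\}$ without picking up spurious contributions — this uses that the $6j$-symbol is declared to vanish outside admissible tuples, so enlarging the summation range is harmless, but it deserves an explicit remark. Second, getting the normalization constant and its exponent exactly right — in particular the appearance of $\sin^2(2\pi/r)$ rather than $\sin(2\pi/r)$, and the sign conventions hidden in the $(-1)^{a+b}$ prefactor of $\mathrm H$ combined with $r$ odd — is the delicate part; I would pin it down first in the rank-one orthogonality lemma above and then let it propagate formally. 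Everything else is a routine interchange of finite sums.
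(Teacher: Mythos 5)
Your overall architecture (an orthogonality property of the kernel $\mathrm H$ plus an interchange of finite sums) is in the right neighborhood of the paper's two-line proof, but there are two genuine gaps, one of which is fatal to the argument as written. First, the orthogonality lemma you propose is false. At $q=e^{2\pi\sqrt{-1}/r}$ with $r$ odd one has $\mathrm H(a,b)=(-1)^{a+b}[(a+1)(b+1)]$, and a short check shows $\mathrm H(r-2-a,b)=\mathrm H(a,b)$ for all $a,b$; hence the $\mathrm H$-transform cannot distinguish $a$ from $r-2-a$, is not injective, and no identity of the form $\sum_c\mathrm H(a,c)\mathrm H(c,b)=C\,\delta_{a,b}$ can hold (the left side takes the same value at $a$ and at $r-2-a\neq a$ while the right side would not). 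The geometric-sum computation actually gives
\begin{displaymath}
\sum_{c=0}^{r-2}\mathrm H(a,c)\,\mathrm H(c,b)=\frac{r}{2\sin^2\!\big(\tfrac{2\pi}{r}\big)}\big(\delta_{a,b}+\delta_{a+b,\,r-2}\big),
\end{displaymath}
and the extra term is not mere bookkeeping: disposing of it requires the behaviour of the squared $6j$-symbol under $a_j\mapsto r-2-a_j$ (the flipped tuple fails the parity condition in the two admissible triples containing $a_j$, so the symbol vanishes there; more generally only flips along the three quadrilaterals preserve admissibility). This symmetry input is exactly what your "resolution of the identity" step sweeps under the rug.

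Second, and more fundamentally, your proposal never supplies the actual content of the proposition, namely the self-duality of $\big|\begin{smallmatrix}a_1&a_2&a_3\\a_4&a_5&a_6\end{smallmatrix}\big|^2$ under the \emph{full} discrete Fourier transform (the case $J=\varnothing$, which the paper quotes from Barrett). Your step "applying the $\mathrm H$-transform in the $I$-variables and then in the $J$-variables recovers the function up to $(r/2\sin^2(2\pi/r))^{6}$" conflates two different operators: the composite of transforms over the \emph{disjoint} sets $I$ and $J$ is the single full transform $\mathcal F_{I\cup J}$, about which the orthogonality of $\mathrm H$ says nothing; orthogonality only controls $\mathcal F_K\circ\mathcal F_K$ over the \emph{same} set $K$. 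In fact the full transform of the squared $6j$-symbol equals $(2\sin^2(2\pi/r)/r)^{3}$ times the squared $6j$-symbol — the $J=\varnothing$ instance of the proposition itself — which is a nontrivial theorem about the tetrahedral graph, not a consequence of any property of $\mathrm H$ alone, and it is inconsistent with your claimed scalar $(r/2\sin^2(2\pi/r))^{6}$. The correct route is the paper's: quote Barrett's self-duality for $J=\varnothing$, then reduce the general $(I,J)$ to it using the (corrected) convolution identity above together with the $a\mapsto r-2-a$ symmetry of the $6j$-symbols; without Barrett's theorem there is no relation at all between the transform over $I$ and the transform over the complementary set $J$.
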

\begin{proof}
The case of $J=\varnothing$ was proven in \cite{BAR}. The general case can be reduced to the case of $J=\varnothing$ using the fact that 
\begin{displaymath}
 \sum_{i=0}^{r-2}\sum_{j=0}^{r-2}\mathrm H(k,i)\mathrm H(j,l)= \frac{2\sin^2\big(\frac{2\pi}{r}\big)}{r} \delta_{kl}.
\end{displaymath}

\end{proof}

\subsection{Dilogarithm and Lobachevsky functions}

Let $\log:\mathbb C\setminus (-\infty, 0]\to\mathbb C$ be the standard logarithm function defined by
$$\log z=\log|z|+\sqrt{-1}\cdot\arg z$$
with $-\pi<\arg z<\pi.$
 
The dilogarithm function $\mathrm{Li}_2: \mathbb C\setminus (1,\infty)\to\mathbb C$ is defined by
$$\mathrm{Li}_2(z)=-\int_0^z\frac{\log (1-u)}{u}du$$
where the integral is along any path in $\mathbb C\setminus (1,\infty)$ connecting $0$ and $z,$ which is holomorphic in $\mathbb C\setminus [1,\infty)$ and continuous in $\mathbb C\setminus (1,\infty).$

The dilogarithm function satisfies the following properties (see eg. Zagier\,\cite{Z}).
\begin{enumerate}[(1)]
\item for any $z\in \mathbb{C}\setminus(1,\infty),$\begin{equation}\label{Li2}
\mathrm{Li}_2\Big(\frac{1}{z}\Big)=-\mathrm{Li}_2(z)-\frac{\pi^2}{6}-\frac{1}{2}\big(\log(-z)\big)^2.
\end{equation} 
\item In the unit disk $\big\{z\in\mathbb C\,\big|\,|z|<1\big\},$ 
\begin{equation}\label{Li1}
\mathrm{Li}_2(z)=\sum_{n=1}^\infty\frac{z^n}{n^2},
\end{equation}
\item On the unit circle $\big\{ z=e^{2\sqrt{-1}\theta}\,\big|\,0 \leqslant \theta\leqslant\pi\big\},$ 
\begin{equation}\label{dilogLob}
\mathrm{Li}_2(e^{2\sqrt{-1}\theta})=\frac{\pi^2}{6}+\theta(\theta-\pi)+2\sqrt{-1}\cdot\Lambda(\theta).
\end{equation}
\end{enumerate}
Here $\Lambda:\mathbb R\to\mathbb R$  is the Lobachevsky function defined by
$$\Lambda(\theta)=-\int_0^\theta\log|2\sin t|dt,$$
which is an odd function of period $\pi$ (see eg. Thurston's notes\,\cite[Chapter 7]{T}).



\subsection{Quantum dilogarithm functions}

The following variant of Faddeev's quantum dilogarithm functions\,\cite{F, FKV} will play a key role in the proof of the main result. 
Let $r\geqslant 3$ be an odd integer. Then the following contour integral
\begin{equation}
\varphi_r(z)=\frac{4\pi \sqrt{-1}}{r}\int_{\Omega}\frac{e^{(2z-\pi)x}}{4x \sinh (\pi x)\sinh (\frac{2\pi x}{r})}\ dx
\end{equation}
defines a holomorphic function on the domain $$\Big\{z\in \mathbb C \ \Big|\ -\frac{\pi}{r}<\mathrm{Re}z <\pi+\frac{\pi}{r}\Big\},$$  
  where the contour is
$$\Omega=\big(-\infty, -\epsilon\big]\cup \big\{z\in \mathbb C\ \big||z|=\epsilon, \mathrm{Im}z>0\big\}\cup \big[\epsilon,\infty\big),$$
for some $\epsilon\in(0,1).$
Note that the integrand has poles at $\sqrt{-1} n,$ $n\in\mathbb Z,$ and the choice of  $\Omega$ is to avoid the pole at $0.$
\\

The function $\varphi_r(z)$ satisfies the following fundamental properties; their proof can be found in \cite[Lemma 2.1]{WY}.
\begin{lemma}
\begin{enumerate}[(1)]
\item For $z\in\mathbb C$ with  $0<\mathrm{Re}z<\pi,$
\begin{equation}\label{fund}
1-e^{2 \sqrt{-1} z}=e^{\frac{r}{4\pi \sqrt{-1}}\Big(\varphi_r\big(z-\frac{\pi}{r}\big)-\varphi_r\big(z+\frac{\pi}{r}\big)\Big)}.
 \end{equation}
 
 \item For $z\in\mathbb C$ with  $-\frac{\pi}{r}<\mathrm{Re}z<\frac{\pi}{r},$
 \begin{equation}\label{f2}
1+e^{r\sqrt{-1}z}=e^{\frac{r}{4\pi \sqrt{-1}}\Big(\varphi_r(z)-\varphi_r\big(z+\pi\big)\Big)}.
\end{equation}
\end{enumerate}
\end{lemma}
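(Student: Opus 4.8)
The plan is to verify each functional equation by reducing it to a single contour integral that can be evaluated explicitly by residues. For (\ref{fund}) I would first take the difference of the two defining integrals: since
$e^{(2(z-\frac{\pi}{r})-\pi)x}-e^{(2(z+\frac{\pi}{r})-\pi)x}=e^{(2z-\pi)x}\big(e^{-\frac{2\pi x}{r}}-e^{\frac{2\pi x}{r}}\big)=-2\,e^{(2z-\pi)x}\sinh\!\big(\tfrac{2\pi x}{r}\big)$, the factor $\sinh\!\big(\tfrac{2\pi x}{r}\big)$ in the denominator cancels and one gets
\[
\varphi_r\big(z-\tfrac{\pi}{r}\big)-\varphi_r\big(z+\tfrac{\pi}{r}\big)=-\frac{2\pi\sqrt{-1}}{r}\int_{\Omega}\frac{e^{(2z-\pi)x}}{x\sinh(\pi x)}\,dx .
\]
Likewise, for (\ref{f2}) the identity $1-e^{2\pi x}=-2\,e^{\pi x}\sinh(\pi x)$ cancels the factor $\sinh(\pi x)$, giving
\[
\varphi_r(z)-\varphi_r(z+\pi)=-\frac{2\pi\sqrt{-1}}{r}\int_{\Omega}\frac{e^{2zx}}{x\sinh(\tfrac{2\pi x}{r})}\,dx .
\]
So in both cases it suffices to evaluate an integral of the form $\int_{\Omega}\frac{e^{\alpha x}}{x\sinh(\beta x)}\,dx$.

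Next I would compute these by residues. The hypotheses $0<\mathrm{Re}\,z<\pi$ in (\ref{fund}) and $-\tfrac{\pi}{r}<\mathrm{Re}\,z<\tfrac{\pi}{r}$ in (\ref{f2}) are exactly what forces exponential decay of the integrand as $x\to\pm\infty$ along $\Omega$ (so the integrals converge) and also what allows one to close the contour by a large semicircle in the upper half-plane with vanishing contribution. Because $\Omega$ passes above $x=0$, the enclosed singularities are the simple poles of $1/\sinh(\pi x)$ at $x=\sqrt{-1}\,n$ with $n\geqslant 1$ (respectively of $1/\sinh(\tfrac{2\pi x}{r})$ at $x=\tfrac{\sqrt{-1}\,rn}{2}$ with $n\geqslant 1$); evaluating the residues with $\frac{d}{dx}\sinh$ gives $\int_{\Omega}\frac{e^{(2z-\pi)x}}{x\sinh(\pi x)}\,dx=2\sum_{n\geqslant 1}\frac{e^{2\sqrt{-1}nz}}{n}=-2\log(1-e^{2\sqrt{-1}z})$ and $\int_{\Omega}\frac{e^{2zx}}{x\sinh(\tfrac{2\pi x}{r})}\,dx=2\sum_{n\geqslant 1}\frac{(-1)^n e^{\sqrt{-1}rnz}}{n}=-2\log(1+e^{\sqrt{-1}rz})$. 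Substituting back yields $\varphi_r(z-\tfrac{\pi}{r})-\varphi_r(z+\tfrac{\pi}{r})=\frac{4\pi\sqrt{-1}}{r}\log(1-e^{2\sqrt{-1}z})$ and $\varphi_r(z)-\varphi_r(z+\pi)=\frac{4\pi\sqrt{-1}}{r}\log(1+e^{\sqrt{-1}rz})$, which become (\ref{fund}) and (\ref{f2}) upon applying $e^{\frac{r}{4\pi\sqrt{-1}}(\cdot)}$. The residue series converge only for $\mathrm{Im}\,z>0$, so this argument first gives the identities on the part of each vertical strip with $\mathrm{Im}\,z>0$; since $\varphi_r$ is holomorphic and the right-hand sides $1-e^{2\sqrt{-1}z}$ and $1+e^{\sqrt{-1}rz}$ are holomorphic and nowhere vanishing on the respective full strips, the identity theorem extends the equalities to all of $0<\mathrm{Re}\,z<\pi$ and $-\tfrac{\pi}{r}<\mathrm{Re}\,z<\tfrac{\pi}{r}$.

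The main technical point is the estimate showing the large semicircle contributes nothing. Away from the imaginary axis $|\sinh(\beta x)|$ grows like $\tfrac12 e^{\beta R|\cos\theta|}$ and, thanks to the restricted range of $\mathrm{Re}\,z$, dominates $|e^{\alpha x}|$; but near $\theta=\tfrac{\pi}{2}$ the factor $\sinh(\beta x)$ oscillates rather than growing, so one must choose the radii $R$ through points where $|\sinh(\beta x)|$ is bounded below (half-integer multiples of the pole spacing) and handle a small arc around $\theta=\tfrac\pi2$ separately, using there the $1/x$ factor and the extra decay $|e^{\alpha x}|\leqslant e^{-cR}$ coming from $\mathrm{Im}\,z>0$. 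A secondary point, so that the logarithms match the branch in the statement, is to check that $1-e^{2\sqrt{-1}z}$ and $1+e^{\sqrt{-1}rz}$ never lie in $(-\infty,0]$ on the respective strips, which follows by locating where these expressions can be real and negative; this ensures the principal logarithm appearing from the series agrees with the one implicit in the lemma. Alternatively, to bypass the convergence/continuation step, one may differentiate both sides in $z$ — the residue sum for the $z$-derivative of each integral telescopes directly to $\frac{d}{dz}\log$ of the claimed expression — and then fix the additive constant by letting $\mathrm{Im}\,z\to+\infty$ and invoking the Riemann–Lebesgue lemma, noting that $1/(x\sinh(\beta x))$ is integrable on $\Omega$.
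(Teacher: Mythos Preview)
Your argument is correct and is the standard residue computation for the quantum dilogarithm: cancelling one of the two $\sinh$ factors by the difference of exponentials and then summing residues in the upper half-plane is exactly how these functional equations are established (cf.\ Faddeev--Kashaev--Volkov). The paper itself does not give a proof of this lemma but simply refers to \cite[Lemma 2.1]{WY}, where precisely this computation is carried out; so there is nothing to compare beyond noting that your proposal matches the cited reference. One small simplification: since the final statements are about $e^{\frac{r}{4\pi\sqrt{-1}}(\cdot)}$ rather than the logarithm itself, the branch discussion is unnecessary --- once you have the identity with $\log$ for $\mathrm{Im}\,z>0$, exponentiating and invoking the identity theorem on the full strip (both sides being holomorphic there, the right-hand sides nonvanishing) finishes immediately.
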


Using (\ref{fund}) and (\ref{f2}), for $z\in\mathbb C$ with $\pi+\frac{2(n-1)\pi}{r}< \mathrm{Re}z< \pi+\frac{2n\pi}{r},$ we can define $\varphi_r(z)$  inductively by the relation
\begin{equation}\label{extension}
\prod_{k=1}^n\Big(1-e^{2 \sqrt{-1} \big(z-\frac{(2k-1)\pi}{r}\big)}\Big)=e^{\frac{r}{4\pi \sqrt{-1}}\Big(\varphi_r\big(z-\frac{2n\pi}{r}\big)-\varphi_r(z)\Big)},
\end{equation}
extending $\varphi_r(z)$ to a meromorphic function on $\mathbb C.$  The poles of $\varphi_r(z)$ have the form $(a+1)\pi+\frac{b\pi}{r}$ or $-a\pi-\frac{b\pi}{r}$ for all nonnegative integer $a$ and positive odd integer $b.$

Let $q=e^{\frac{2\pi \sqrt{-1}}{r}},$
and let $$(q)_n=\prod_{k=1}^n(1-q^{2k}).$$

\begin{lemma}\label{fact}
\begin{enumerate}[(1)]
\item For $0\leqslant n \leqslant r-2,$
\begin{equation}
(q)_n=e^{\frac{r}{4\pi \sqrt{-1}}\Big(\varphi_r\big(\frac{\pi}{r}\big)-\varphi_r\big(\frac{2\pi n}{r}+\frac{\pi}{r}\big)\Big)}.
\end{equation}
\item For $\frac{r-1}{2}\leqslant n \leqslant r-2,$
\begin{equation} 
(q)_n=2e^{\frac{r}{4\pi \sqrt{-1}}\Big(\varphi_r\big(\frac{\pi}{r}\big)-\varphi_r\big(\frac{2\pi n}{r}+\frac{\pi}{r}-\pi\big)\Big)}.
\end{equation}
\end{enumerate}
\end{lemma}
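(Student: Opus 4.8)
The plan is to write $(q)_n$ as a telescoping product and feed each factor $1-q^{2k}$ into the functional equations (\ref{fund}) and (\ref{f2}); the extra $2$ in part (2) will come from evaluating (\ref{f2}) at a point where $1+e^{r\sqrt{-1}z}=2$. First, for $0\le n\le\frac{r-1}{2}$ every $k\in\{1,\dots,n\}$ has $\frac{2\pi k}{r}\in(0,\pi)$, so (\ref{fund}) at $z=\frac{2\pi k}{r}$ gives $1-q^{2k}=e^{\frac{r}{4\pi\sqrt{-1}}(\varphi_r(\frac{(2k-1)\pi}{r})-\varphi_r(\frac{(2k+1)\pi}{r}))}$; multiplying over $k$ telescopes to $e^{\frac{r}{4\pi\sqrt{-1}}(\varphi_r(\frac{\pi}{r})-\varphi_r(\frac{(2n+1)\pi}{r}))}$, which is part (1) since $\frac{(2n+1)\pi}{r}=\frac{2\pi n}{r}+\frac{\pi}{r}$. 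Here the factors $1-q^{2k}$ are all nonzero, since $r$ is odd and $0<2k<r$, so none of these identities is vacuous.

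For $\frac{r+1}{2}\le n\le r-2$ I would factor $(q)_n=(q)_{(r-1)/2}\cdot\prod_{k=(r+1)/2}^{n}(1-q^{2k})$. Using $q^r=1$ one has $q^{2k}=q^{2j-1}$ with $j=k-\frac{r-1}{2}$, so the second product equals $\prod_{j=1}^{m}(1-q^{2j-1})$ with $m=n-\frac{r-1}{2}\in\{1,\dots,\frac{r-3}{2}\}$, and this product can be evaluated two ways. Applying (\ref{extension}) with this $m$ at $z=\pi+\frac{2m\pi}{r}$ gives $\prod_{j=1}^{m}(1-q^{2j-1})=e^{\frac{r}{4\pi\sqrt{-1}}(\varphi_r(\pi)-\varphi_r(\pi+\frac{2m\pi}{r}))}$, while telescoping (\ref{fund}) at the points $z=\frac{(2j-1)\pi}{r}\in(0,\pi)$ gives $\prod_{j=1}^{m}(1-q^{2j-1})=e^{\frac{r}{4\pi\sqrt{-1}}(\varphi_r(0)-\varphi_r(\frac{2m\pi}{r}))}$. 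Since the already-proven case gives $(q)_{(r-1)/2}=e^{\frac{r}{4\pi\sqrt{-1}}(\varphi_r(\frac{\pi}{r})-\varphi_r(\pi))}$, and since $\pi+\frac{2m\pi}{r}=\frac{2\pi n}{r}+\frac{\pi}{r}$ and $\frac{2m\pi}{r}=\frac{2\pi n}{r}+\frac{\pi}{r}-\pi$, the first evaluation yields part (1) for this range, and the second yields $(q)_n=e^{\frac{r}{4\pi\sqrt{-1}}(\varphi_r(0)-\varphi_r(\pi))}\cdot e^{\frac{r}{4\pi\sqrt{-1}}(\varphi_r(\frac{\pi}{r})-\varphi_r(\frac{2\pi n}{r}+\frac{\pi}{r}-\pi))}$. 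Finally (\ref{f2}) at $z=0$ gives $e^{\frac{r}{4\pi\sqrt{-1}}(\varphi_r(0)-\varphi_r(\pi))}=1+e^{0}=2$, which turns the last formula into part (2); the remaining case $n=\frac{r-1}{2}$ of part (2) is the same statement with $m=0$ and is again just (\ref{f2}) at $z=0$.

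I expect the only real obstacle to be the bookkeeping of domains: at every step one must check that the arguments of $\varphi_r$ stay away from its poles $(a+1)\pi+\frac{b\pi}{r}$ and $-a\pi-\frac{b\pi}{r}$, and that the functional equation invoked is valid at the point used — in particular that (\ref{extension}) may be applied at the boundary point $z=\pi+\frac{2m\pi}{r}$ of its strip, which is legitimate because $\varphi_r$ is holomorphic there and both sides of the identity are continuous and finite at that point. The hypothesis that $r$ is odd enters precisely in guaranteeing that the factors $1-q^{2k}$ (and $1-q^{2j-1}$) do not vanish in the ranges considered and that the points $\pi+\frac{2m\pi}{r}$, $\frac{2\pi n}{r}+\frac{\pi}{r}$, $0$, $\pi$ and $\frac{\pi}{r}$ are not poles of $\varphi_r$; once these checks are in place the rest is routine algebra. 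Alternatively, one may bypass the appeal to (\ref{extension}) in part (1) for large $n$ by first showing, by dividing consecutive instances of (\ref{extension}), that the relation (\ref{fund}) persists on each strip $\pi+\frac{2(m-1)\pi}{r}<\mathrm{Re}\,z<\pi+\frac{2m\pi}{r}$, and then running the telescoping of the first paragraph for all $n\le r-2$; I would present whichever version is shorter.
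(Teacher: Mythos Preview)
Your proposal is correct. The paper states this lemma without proof, treating it as an immediate consequence of the functional equations (\ref{fund}), (\ref{f2}) and (\ref{extension}); your telescoping argument is precisely the intended derivation, and your bookkeeping of domains (including the continuity justification for invoking (\ref{extension}) at the boundary point $z=\pi+\frac{2m\pi}{r}$, which is not a pole since $2m$ is even) is accurate.
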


Let $\{n\}!=\prod_{k=1}^n\{k\}.$ Then
$$\{n\}!=(-1)^nq^{-\frac{n(n+1)}{2}}(q)_n,$$ and
as a consequence of Lemma \ref{fact}, we have

\begin{lemma}\label{factorial}
\begin{enumerate}[(1)]
\item For $0\leqslant n \leqslant r-2,$
\begin{equation}
\{n\}!=e^{\frac{r}{4\pi \sqrt{-1}}\Big(-2\pi\big(\frac{2\pi n}{r}\big)+\big(\frac{2\pi}{r}\big)^2(n^2+n)+\varphi_r\big(\frac{\pi}{r}\big)-\varphi_r\big(\frac{2\pi n}{r}+\frac{\pi}{r}\big)\Big)}.
\end{equation}
\item For $\frac{r-1}{2}\leqslant n \leqslant r-2,$
\begin{equation} \label{move}
\{n\}!=2e^{\frac{r}{4\pi \sqrt{-1}}\Big(-2\pi\big(\frac{2\pi n}{r}\big)+\big(\frac{2\pi }{r}\big)^2(n^2+n)+\varphi_r\big(\frac{\pi}{r}\big)-\varphi_r\big(\frac{2\pi n}{r}+\frac{\pi}{r}-\pi\big)\Big)}.
\end{equation}
\end{enumerate}
\end{lemma}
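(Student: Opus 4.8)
The plan is to reduce the statement entirely to Lemma \ref{fact} by means of the elementary identity $\{n\}! = (-1)^n q^{-n(n+1)/2}(q)_n$ recalled immediately above, whose only verification is to note that $1 - q^{2k} = -q^k\{k\}$ and multiply over $k = 1,\dots,n$. The sole piece of content is then to recognize the monomial prefactor $(-1)^n q^{-n(n+1)/2}$ as an exponential of a quadratic polynomial in $n$, renormalized by the factor $\frac{r}{4\pi\sqrt{-1}}$ that appears throughout this section. Concretely, since $q = e^{2\pi\sqrt{-1}/r}$, a direct computation gives
\begin{equation*}
(-1)^n = e^{\pi\sqrt{-1}\,n} = e^{\frac{r}{4\pi\sqrt{-1}}\left(-2\pi\cdot\frac{2\pi n}{r}\right)},
\qquad
q^{-\frac{n(n+1)}{2}} = e^{-\frac{\pi\sqrt{-1}\,n(n+1)}{r}} = e^{\frac{r}{4\pi\sqrt{-1}}\left(\frac{2\pi}{r}\right)^2(n^2+n)},
\end{equation*}
where the only subtlety is the bookkeeping of signs via $1/\sqrt{-1} = -\sqrt{-1}$. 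Multiplying these two identities together and then inserting the expression for $(q)_n$ from part (1) of Lemma \ref{fact} in the range $0\leqslant n\leqslant r-2$ (resp. from part (2) in the range $\frac{r-1}{2}\leqslant n\leqslant r-2$), and combining the exponentials, yields part (1) (resp. part (2)) of the lemma; in the second range the extra multiplicative factor $2$ is carried over verbatim from Lemma \ref{fact}(2).

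There is no real obstacle here: the argument is a single line of algebra once the prefactor has been rewritten in exponential form. The only points requiring minor care are the sign convention just mentioned and the observation that the two exponentials being multiplied live in a common region where $\varphi_r$ is holomorphic, namely the strip $-\frac{\pi}{r} < \mathrm{Re}\,z < \pi + \frac{\pi}{r}$, which contains $\frac{\pi}{r}$ and all the evaluation points $\frac{2\pi n}{r} + \frac{\pi}{r}$ for $0 \leqslant n \leqslant r-2$ (respectively their shifts by $\pi$ used in part (2)), so that the factorization of $\{n\}!$ into the product of exponentials is legitimate.
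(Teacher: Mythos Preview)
Your proof is correct and matches the paper's approach exactly: the paper simply states the identity $\{n\}! = (-1)^n q^{-n(n+1)/2}(q)_n$ and declares the lemma ``a consequence of Lemma \ref{fact},'' and you have carried out precisely that consequence by rewriting the prefactor in exponential form and combining. The last paragraph about the holomorphicity domain of $\varphi_r$ is unnecessary (and slightly off, since for large $n$ the point $\frac{2\pi n}{r}+\frac{\pi}{r}$ falls outside the original strip and one is relying on the meromorphic extension already built into Lemma \ref{fact}), but this does not affect the argument.
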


We consider (\ref{move}) because there are poles in $(\pi,2\pi),$ and to avoid the poles we move the variables to $(0,\pi)$ by subtracting $\pi.$

The function $\varphi_r(z)$ and the dilogarithm function are closely related as follows.

\begin{lemma}\label{converge}  \begin{enumerate}[(1)]
\item For every $z$ with $0<\mathrm{Re}z<\pi,$ 
\begin{equation}\label{conv1}
\varphi_r(z)=\mathrm{Li}_2(e^{2\sqrt{-1}z})+\frac{2\pi^2e^{2\sqrt{-1}z}}{3(1-e^{2\sqrt{-1}z})}\frac{1}{r^2}+O\Big(\frac{1}{r^4}\Big).
\end{equation}
\item For every $z$ with $0<\mathrm{Re}z<\pi,$ 
\begin{equation}\label{conv2}
\varphi_r'(z)=-2\sqrt{-1}\cdot\log(1-e^{2\sqrt{-1}z})+O\Big(\frac{1}{r^2}\Big).
\end{equation}
\item \cite[Formula (8)(9)]{O2}
$$\varphi_r\Big(\frac{\pi}{r}\Big)=\mathrm{Li}_2(1)+\frac{2\pi\sqrt{-1}}{r}\log\Big(\frac{r}{2}\Big)-\frac{\pi^2}{r}+O\Big(\frac{1}{r^2}\Big).$$
\end{enumerate}\end{lemma}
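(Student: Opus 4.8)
## Proof Proposal for Lemma \ref{converge}

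The plan is to prove each of the three asymptotic expansions by reducing $\varphi_r(z)$ to an explicit contour integral and analyzing it via residue calculus and Laplace-type estimates. The core idea, following Faddeev--Kashaev and Ohtsuki, is that the factor $\frac{1}{\sinh(2\pi x/r)}$ in the integrand behaves like $\frac{r}{2\pi x}$ plus a correction of order $1/r$ as $r\to\infty$, and that pushing the contour $\Omega$ toward the real axis picks up the dilogarithm as a residue sum.

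First I would treat part (1). On the strip $0<\mathrm{Re}\,z<\pi$ one expands
$$
\frac{1}{\sinh(2\pi x/r)}=\frac{r}{2\pi x}-\frac{2\pi x}{6r}+\frac{7}{360}\Big(\frac{2\pi x}{r}\Big)^3\frac{r}{2\pi x}+\cdots
$$
valid uniformly for $x$ on $\Omega$ away from the poles, and substitutes into the integral. The leading term $\frac{r}{2\pi x}$ gives, after the normalization $\frac{4\pi\sqrt{-1}}{r}$, exactly the classical integral representation
$$
\int_\Omega\frac{e^{(2z-\pi)x}}{2x^2\sinh(\pi x)}\,dx,
$$
which one identifies with $\mathrm{Li}_2(e^{2\sqrt{-1}z})$ by deforming $\Omega$ to the real line and summing the residues at $x=\sqrt{-1}n$, $n\geq 1$ (each residue contributes a term $\frac{e^{2\sqrt{-1}nz}}{n^2}$, reproducing \eqref{Li1}); one must check the arc at infinity contributes nothing, which holds because $0<\mathrm{Re}\,z<\pi$ forces decay of $e^{(2z-\pi)x}$ in the relevant half-planes. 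The next term, of order $1/r^2$, produces $\frac{4\pi\sqrt{-1}}{r}\cdot\big(-\frac{2\pi}{6r}\big)\int_\Omega\frac{x\,e^{(2z-\pi)x}}{4x\sinh(\pi x)}\,dx$; this integral is elementary (again by residues) and evaluates to a multiple of $\frac{e^{2\sqrt{-1}z}}{1-e^{2\sqrt{-1}z}}$, giving the stated coefficient $\frac{2\pi^2}{3}$. The remaining terms are $O(1/r^4)$ by the same bookkeeping, using that each further term in the $\sinh$-expansion gains a factor $1/r^2$ and the resulting $x$-integrals converge. Part (2) follows either by differentiating the integral representation under the integral sign (justified by uniform convergence on compact subsets of the strip) and repeating the leading-order analysis — the $x$-integral for $\varphi_r'(z)$ has integrand $\frac{2x e^{(2z-\pi)x}}{2x^2\sinh\pi x}$ at leading order, whose residue sum is $-2\sqrt{-1}\sum_{n\geq1}\frac{e^{2\sqrt{-1}nz}}{n}=-2\sqrt{-1}(-\log(1-e^{2\sqrt{-1}z}))$ — or simply by differentiating the expansion in (1) termwise, noting $\frac{d}{dz}\mathrm{Li}_2(e^{2\sqrt{-1}z})=-2\sqrt{-1}\log(1-e^{2\sqrt{-1}z})$ and that the error term differentiates to $O(1/r^2)$.

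For part (3), which is attributed to \cite[Formulas (8),(9)]{O2}, I would simply cite Ohtsuki's computation; the point $z=\pi/r$ lies at the edge of the strip where the $\frac{1}{\sinh(2\pi x/r)}\sim\frac{r}{2\pi x}$ approximation degenerates (the exponent $2z-\pi=2\pi/r-\pi$ is near $-\pi$, the boundary), so the analysis is genuinely different from parts (1)--(2): one gets the anomalous $\frac{2\pi\sqrt{-1}}{r}\log(r/2)$ term precisely because the would-be leading singularity at $x=0$ interacts with the endpoint of the strip. Since the excerpt already brackets this as a quoted formula, no independent argument is needed.

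The main obstacle is making the contour deformations and the interchange of summation (over residues) with the limit $r\to\infty$ rigorous and \emph{uniform} in $z$ on the open strip — in particular controlling the behavior as $\mathrm{Re}\,z\to 0^+$ or $\mathrm{Re}\,z\to\pi^-$, where $e^{2\sqrt{-1}z}$ approaches the unit circle and the geometric-type series converge only conditionally. For the applications in this paper the expansions are only needed on compact subsets of the strip (the relevant $z$'s are bounded away from $0$ and $\pi$), so I would state and prove the lemma with all error terms uniform on compact subsets, which sidesteps the boundary subtleties entirely and suffices for every later use.
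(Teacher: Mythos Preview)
The paper does not supply a proof of this lemma: it is stated without argument, with part (3) explicitly attributed to Ohtsuki \cite{O2}, and parts (1)--(2) treated as standard facts about Faddeev's quantum dilogarithm (the surrounding lemmas on $\varphi_r$ are referred to \cite{WY}). So there is nothing in the paper to compare your argument against.

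That said, your approach is exactly the standard one and is correct. Expanding $\frac{1}{\sinh(2\pi x/r)}=\frac{r}{2\pi x}-\frac{\pi x}{3r}+O(x^3/r^3)$ and integrating term by term is precisely how these asymptotics are derived in the literature; your identification of the leading integral with $\mathrm{Li}_2(e^{2\sqrt{-1}z})$ via the residues at $x=\sqrt{-1}n$ is right, and the contour-shift $x\mapsto x+\sqrt{-1}$ you implicitly invoke for the $1/r^2$ term indeed gives $\int_\Omega \frac{e^{(2z-\pi)x}}{\sinh(\pi x)}\,dx=\frac{2\sqrt{-1}e^{2\sqrt{-1}z}}{1-e^{2\sqrt{-1}z}}$, recovering the stated coefficient $\frac{2\pi^2}{3}$. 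For part (2), the cleanest route is the one you list second (differentiate the expansion from (1)); your first route via direct residue summation for $\varphi_r'$ is slightly garbled --- the integrand still has a double pole at $x=0$ after differentiation, so the shift argument does not close as simply as you sketch --- but this is a cosmetic slip since the termwise-differentiation argument is unimpeachable. Your remark that the estimates should be stated uniformly on compact subsets of the strip $0<\mathrm{Re}\,z<\pi$ is well taken and is how the lemma is actually used in Sections~4--5.
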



\section{The geometry of quantum \texorpdfstring{$6j$}{6j}-symbols}

\begin{definition} An $r$-admissible $6$-tuple $(a_1,\dots,a_6)$  is of the \emph{hyperideal type} if for $\{i,j,k\}=\{1,2,3\},$ $\{1,5,6\},$ $\{2,4,6\}$ and $\{3,4,5\},$
\begin{enumerate}[(1)]
\item $0\leqslant a_i+a_j-a_k<r-2,$
\item $r-2<a_i+a_j+a_k\leqslant 2(r-2),$
\item $a_i+a_j+a_k$ is even.
\end{enumerate}
\end{definition}

As a consequence of Lemma \ref{factorial} we have

\begin{proposition}\label{6jqd} The quantum $6j$-symbol at the root of unity $q=e^{\frac{2\pi \sqrt{-1}}{r}}$ can be computed as 
$$\bigg|
\begin{matrix}
        a_1 & a_2 & a_3 \\
        a_4 & a_5 & a_6 
      \end{matrix} \bigg|=\frac{\{1\}}{2}\sum_{k=\max\{T_1,T_2,T_3,T_4\}}^{\min\{Q_1,Q_2,Q_3,r-2\}}e^{\frac{r}{4\pi \sqrt{-1}}U_r\big(\frac{2\pi a_1}{r},\dots,\frac{2\pi a_6}{r},\frac{2\pi k}{r}\big)},$$
 where $U_r$ is defined as follows. If $(a_1,\dots,a_6)$ is of hyperideal type, then
\begin{equation}\label{termwithr}
\begin{split}
U_r(\alpha_1,\dots,\alpha_6,\xi)=&\pi^2-\Big(\frac{2\pi}{r}\Big)^2+\frac{1}{2}\sum_{i=1}^4\sum_{j=1}^3(\eta_j-\tau_i)^2-\frac{1}{2}\sum_{i=1}^4\Big(\tau_i+\frac{2\pi}{r}-\pi\Big)^2\\
&+\Big(\xi+\frac{2\pi}{r}-\pi\Big)^2-\sum_{i=1}^4(\xi-\tau_i)^2-\sum_{j=1}^3(\eta_j-\xi)^2\\
&-2\varphi_r\Big(\frac{\pi}{r}\Big)-\frac{1}{2}\sum_{i=1}^4\sum_{j=1}^3\varphi_r\Big(\eta_j-\tau_i+\frac{\pi}{r}\Big)+\frac{1}{2}\sum_{i=1}^4\varphi_r\Big(\tau_i-\pi+\frac{3\pi}{r}\Big)\\
&-\varphi_r\Big(\xi-\pi+\frac{3\pi}{r}\Big)+\sum_{i=1}^4\varphi_r\Big(\xi-\tau_i+\frac{\pi}{r}\Big)+\sum_{j=1}^3\varphi_r\Big(\eta_j-\xi+\frac{\pi}{r}\Big),\\
\end{split}
\end{equation}
where $\alpha_i=\frac{2\pi a_i}{r}$ for $i=1,\dots,6$ and $\xi=\frac{2\pi k}{r},$ $\tau_1=\frac{\alpha_1+\alpha_2+\alpha_3}{2},$ $\tau_2=\frac{\alpha_1+\alpha_5+\alpha_6}{2},$ $\tau_3=\frac{\alpha_2+\alpha_4+\alpha_6}{2}$ and $\tau_4=\frac{\alpha_3+\alpha_4+\alpha_5}{2},$ $\eta_1=\frac{\alpha_1+\alpha_2+\alpha_4+\alpha_5}{2},$ $\eta_2=\frac{\alpha_1+\alpha_3+\alpha_4+\alpha_6}{2}$ and $\eta_3=\frac{\alpha_2+\alpha_3+\alpha_5+\alpha_6}{2}.$
If $(a_1,\dots,a_6)$ is not of the hyperideal type, then $U_r$ will be changed according to Lemma \ref{factorial}.
\end{proposition}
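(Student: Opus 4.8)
The plan is to express every quantum factorial occurring in the definition of the $6j$-symbol in terms of the function $\varphi_r$ via Lemma \ref{factorial}, to absorb the resulting constant prefactors into $\frac{\{1\}}{2}$, and to recognise the sum of the resulting exponents as $\frac{r}{4\pi\sqrt{-1}}U_r$.

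First I would truncate the $k$-summation at $k=r-2$: since $\{r\}=q^r-q^{-r}=0$, the factorial $[k+1]!$ vanishes once $k+1\geqslant r$, which accounts for the upper limit $\min\{Q_1,Q_2,Q_3,r-2\}$. Writing $[n]!=\{n\}!/\{1\}^n$, the $6j$-symbol becomes $\sqrt{-1}^{-\sum a_i}$ times a sum over $k$ of $(-1)^k$, times a power of $\{1\}$, times a ratio of factors $\{n\}!$: the twelve factors $\{\frac{a_i+a_j-a_k}{2}\}!$ and the four factors $\{T_i+1\}!$ coming from the four $\Delta$'s, together with $\{k+1\}!$, $\{k-T_i\}!$ and $\{Q_j-k\}!$ from the summand. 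Substituting Lemma \ref{factorial} for each $\{n\}!$, the hyperideal-type hypotheses are exactly what decide which part of the Lemma to use: the indices $\frac{a_i+a_j-a_k}{2}$, $k-T_i$ and $Q_j-k$ all lie in $[0,\frac{r-3}{2}]$, so part (1) applies and produces $\varphi_r$ evaluated at $\eta_j-\tau_i+\frac{\pi}{r}$, $\xi-\tau_i+\frac{\pi}{r}$ and $\eta_j-\xi+\frac{\pi}{r}$ respectively, while $T_i+1$ and $k+1$ lie in $[\frac{r-1}{2},r-1]$, so part (2) applies (with its factor of $2$ and the shift by $\pi$ that keeps $\varphi_r$ away from its poles), producing $\varphi_r$ at $\tau_i-\pi+\frac{3\pi}{r}$ and $\xi-\pi+\frac{3\pi}{r}$. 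At the extreme value $T_i+1=r-1$ or $k+1=r-1$ one first invokes $[r-1]!=-[r-2]!$ (a consequence of $\{r-1\}=-\{1\}$) to land in the range of the Lemma; this is one of the places where boundary effects must be tracked.

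It then remains to collect terms. On the scalar side, the powers of $\{1\}$ telescope to a single factor $\{1\}$ using the identities $\sum_{i=1}^4 T_i=\sum_{j=1}^3 Q_j=\sum_{i=1}^6 a_i$ and the fact that the half-differences of each admissible triple sum to the corresponding $T_i$; the factor $\frac12$ inside each of the four $\Delta$'s (from part (2) applied to $\{T_i+1\}!$) sits under a square root and so contributes $2^{-1/2}$, and together with the factor $2$ from $\{k+1\}!$ this gives $2^{-2}\cdot 2=2^{-1}$, whence the overall prefactor $\frac{\{1\}}{2}$. On the exponent side, $\sqrt{-1}^{-\sum a_i}$ contributes $\pi\sum_i\alpha_i$, the sign $(-1)^k$ contributes $-2\pi\xi$, and each $\{n\}!$ contributes (via Lemma \ref{factorial}) a polynomial part of the form $(\mu-\pi+\frac{\pi}{r})^2-(\pi-\frac{\pi}{r})^2$, with $\mu$ the corresponding linear combination of the $\alpha$'s and $\xi$, plus the $\varphi_r$-terms already identified. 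A direct if lengthy computation then verifies that the signed sum of all these polynomial parts together with $\pi\sum_i\alpha_i-2\pi\xi$ reproduces the polynomial part of the asserted $U_r$, and that the $\varphi_r$-terms assemble exactly into its $\varphi_r$-part; in particular the coefficient of $\varphi_r(\frac{\pi}{r})$ comes out to $6-2+1-4-3=-2$. The cases where $(a_1,\dots,a_6)$ is not of hyperideal type are treated identically, except that some indices now fall into the other part of Lemma \ref{factorial}, which modifies $U_r$ accordingly.

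I expect the only real difficulty to be the sign and branch bookkeeping rather than any single idea: one must track the convention $\sqrt{x}=\sqrt{|x|}\sqrt{-1}$ for $x<0$ used to define $\Delta$ (the factorial $[T_i+1]!$ can indeed be negative for hyperideal-type tuples), the half-integer powers of $\{1\}$ created by the square roots in the $\Delta$'s, the factors of $2$ from part (2) of Lemma \ref{factorial}, and the boundary indices $n=r-1$ mentioned above, all simultaneously and consistently. Once this is carried out the proposition follows mechanically from Lemma \ref{factorial}.
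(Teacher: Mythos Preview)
Your proposal is correct and is exactly the computation the paper has in mind: the paper states the proposition ``as a consequence of Lemma~\ref{factorial}'' without further proof, and what you have written is precisely the unpacking of that consequence. Your identification of which branch of Lemma~\ref{factorial} applies to each factorial under the hyperideal hypothesis, your count $6-2+1-4-3=-2$ for the coefficient of $\varphi_r(\pi/r)$, and your bookkeeping $\{1\}^{2}\cdot\{1\}^{-1}\cdot 2^{-2}\cdot 2=\{1\}/2$ for the scalar prefactor are all correct.
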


\begin{definition} A $6$-tuple $(\alpha_1,\dots,\alpha_6)\in [0,2\pi]^6$ is of the \emph{hyperideal type} if
\begin{enumerate}[(1)]
\item $0\leqslant \alpha_i+\alpha_j-\alpha_k\leqslant 2\pi,$
\item $2\pi\leqslant \alpha_i+\alpha_j+\alpha_k\leqslant 4\pi.$
\end{enumerate}
\end{definition}

We notice that the six numbers $|\pi-\alpha_1|,\dots,|\pi-\alpha_6|$ are the dihedral angles of an ideal or a hyperideal tetrahedron if and only if $(\alpha_1,\dots,\alpha_6)$ is of the hyperideal type. 

By Lemma \ref{converge}, $U_r=U-\frac{4\pi\sqrt{-1}}{r}\log\big(\frac{r}{2}\big)+O(\frac{1}{r}),$ where  $U$ is defined by
\begin{equation}\label{term}
\begin{split}
U(\alpha_1,\dots,\alpha_6,\xi)=&\pi^2+\frac{1}{2}\sum_{i=1}^4\sum_{j=1}^3(\eta_j-\tau_i)^2-\frac{1}{2}\sum_{i=1}^4(\tau_i-\pi)^2\\
&+(\xi-\pi)^2-\sum_{i=1}^4(\xi-\tau_i)^2-\sum_{j=1}^3(\eta_j-\xi)^2\\
&-2\mathit{Li}_2(1)-\frac{1}{2}\sum_{i=1}^4\sum_{j=1}^3\mathit{Li}_2\big(e^{2i(\eta_j-\tau_i)}\big)+\frac{1}{2}\sum_{i=1}^4\mathit{Li}_2\big(e^{2i(\tau_i-\pi)}\big)\\
&-\mathit{Li}_2\big(e^{2i(\xi-\pi)}\big)+\sum_{i=1}^4\mathit{Li}_2\big(e^{2i(\xi-\tau_i)}\big)+\sum_{j=1}^3\mathit{Li}_2\big(e^{2i(\eta_j-\xi)}\big)\\
\end{split}
\end{equation}
on the region $\mathrm{B_{H,\mathbb C}}$ consisting of $(\alpha_1,\dots,\alpha_6,\xi)\in\mathbb C^7$ such that $(\mathrm{Re}(\alpha_1),\dots,\mathrm{Re}(\alpha_6))$ is of the hyperideal type and $\max\{\mathrm{Re}(\tau_i)\}\leqslant \mathrm{Re}(\xi)\leqslant \min\{\mathrm{Re}(\eta_j), 2\pi\}.$ 

Let 
$$\mathrm{B_H}=\mathrm{B_{H,\mathbb C}}\cap \mathbb R^7.$$
Then by (\ref{dilogLob}), for $(\alpha_1,\dots,\alpha_6,\xi)\in\mathrm{B_H},$
\begin{equation}\label{UV} 
U(\alpha_1,\dots,\alpha_6,\xi)=2\pi^2+2\sqrt{-1}\cdot V(\alpha_1,\dots,\alpha_6,\xi)
\end{equation}
for $V:\mathrm{B_H}\to\mathbb R$ defined by 
\begin{equation}\label{V}
\begin{split}
V(\alpha_1,\dots,\alpha_6,\xi)=\,&\delta(\alpha_1,\alpha_2,\alpha_3)+\delta(\alpha_1,\alpha_5,\alpha_6)+\delta(\alpha_2,\alpha_4,\alpha_6)+\delta(\alpha_3,\alpha_4,\alpha_5)\\
&-\Lambda(\xi)+\sum_{i=1}^4\Lambda(\xi-\tau_i)+\sum_{j=1}^3\Lambda(\eta_j-\xi)
\end{split}
\end{equation}
with $\delta$ defined by
$$\delta(x,y,z)=-\frac{1}{2}\Lambda\Big(\frac{x+y-z}{2}\Big)-\frac{1}{2}\Lambda\Big(\frac{y+z-x}{2}\Big)-\frac{1}{2}\Lambda\Big(\frac{z+x-y}{2}\Big)+\frac{1}{2}\Lambda\Big(\frac{x+y+z}{2}\Big).$$
As a consequence, we have
\begin{equation}\label{pU}
\begin{split}
\frac{\partial U}{\partial \xi}=2\sqrt{-1}\cdot\frac{\partial V}{\partial \xi}=2\sqrt{-1}\cdot\log\frac{\sin(-\xi)\sin(\eta_1-\xi)\sin(\eta_2-\xi)\sin(\eta_3-\xi)}{\sin(\xi-\tau_1)\sin(\xi-\tau_2)\sin(\xi-\tau_3)\sin(\xi-\tau_4)},
\end{split}
\end{equation}
and
\begin{equation}\label{pUa}
\begin{split}
\frac{\partial U}{\partial \alpha_1}=2\sqrt{-1}\cdot\frac{\partial V}{\partial \alpha_1}=&\frac{\sqrt{-1}}{2}\cdot\log\frac{\sin(\frac{\alpha_1+\alpha_2-\alpha_3}{2})\sin(\frac{\alpha_1+\alpha_3-\alpha_2}{2})\sin(\frac{\alpha_1+\alpha_5-\alpha_6}{2})\sin(\frac{\alpha_1+\alpha_6-\alpha_5}{2})}{\sin(\frac{\alpha_2+\alpha_3-\alpha_1}{2})\sin(\frac{\alpha_1+\alpha_2+\alpha_3}{2})\sin(\frac{\alpha_5+\alpha_6-\alpha_1}{2})\sin(\frac{\alpha_1+\alpha_5+\alpha_6}{2})}\\
&+\sqrt{-1}\cdot\log\frac{\sin(\xi-\tau_1)\sin(\xi-\tau_2)}{\sin(\eta_1-\xi)\sin(\eta_2-\xi)}.
\end{split}
\end{equation}

A result of Costantino\,\cite{C} shows that for each $\alpha=(\alpha_1,\dots,\alpha_6)$ of the hyperideal type, there exists a unique $\xi_{\alpha}$ so that $(\alpha,\xi_\alpha)\in\mathrm{B_H}$ and 
$$\frac{\partial V(\alpha,\xi)}{\partial \xi}\Big|_{\xi=\xi_\alpha}=0.$$
Indeed, oen can prove that for each $\alpha,$ $V$ is strictly concave down in $\xi$ with derivatives $\pm\infty$ at the boundary points of the interval of $\xi,$ hence there is a unique critical point which is the absolute maximum. Moreover, he shows that by the Murakami-Yano formula\,\cite{MY,U},
\begin{equation}\label{VV}
V(\alpha,\xi_\alpha)=\mathrm{Vol}(\Delta_{|\pi-\alpha|}),
\end{equation}
the volume of the hyperideal tetrahedron with dihedral angles $|\pi-\alpha_1,|,\dots, |\pi-\alpha_6|.$

As a consequence of (\ref{UV}), we have on $\mathrm{B_H}$
\begin{equation}\label{Costantino}
\frac{\partial U(\alpha,\xi)}{\partial \xi}\Big|_{\xi=\xi_\alpha}=0.
\end{equation}

Let $u_i=e^{\sqrt{-1}\alpha_i}$ for $i=1,\dots,6$ and let  $z=e^{-2\sqrt{-1}\xi}.$ Then a direct computation shows that
\begin{equation}\label{pUC}
\frac{\partial U}{\partial \xi}=2\sqrt{-1}\cdot\log\frac{(1-z)(1-zu_1u_2u_4u_5)(1-zu_1u_3u_4u_6)(1-zu_2u_3u_5u_6)}{(1-zu_1u_2u_3)(1-zu_1u_5u_6)(1-zu_2u_4u_6)(1-zu_3u_4u_5)}\quad\quad(\mathrm{mod}\ 4\pi),
\end{equation}
and 
\begin{equation}\label{pUaC}
\begin{split}
\frac{\partial U}{\partial \alpha_1}=&\frac{\sqrt{-1}}{2}\cdot\log\frac{(1-u_1u_2u_3^{-1})(1-u_1u_2^{-1}u_3)(1-u_1u_5u_6^{-1})(1-u_1u_5^{-1}u_6)}{u_1^4(1-u_1^{-1}u_2u_3)(1-u_1^{-1}u_2^{-1}u_3^{-1})(1-u_1^{-1}u_5u_6)(1-u_1^{-1}u_5^{-1}u_6^{-1})}\\
&+\sqrt{-1}\cdot\log\frac{u_4(1-zu_1u_2u_3)(1-zu_1u_5u_6)}{(1-zu_1u_2u_4u_5)(1-zu_1u_3u_4u_6)}\quad\quad\quad\quad\quad\quad\quad\quad\quad\quad(\mathrm{mod}\ \pi).
\end{split}
\end{equation}

For a fixed $\alpha$ so that $\mathrm{Re}(\alpha)$ is of the hyperideal type, consider the function $U_\alpha$ of $\xi$ defined by $U_\alpha(\xi)=U(\alpha,\xi).$ From (\ref{pUC}), if $\xi$ is a critical point of $U_\alpha,$ then as a necessary condition
$$\frac{(1-z)(1-zu_1u_2u_4u_5)(1-zu_1u_3u_4u_6)(1-zu_2u_3u_5u_6)}{(1-zu_1u_2u_3)(1-zu_1u_5u_6)(1-zu_2u_4u_6)(1-zu_3u_4u_5)}=1,$$
which is equivalent to the following quadratic equation 
\begin{equation}\label{qe}
Az^2+Bz+C=0,
\end{equation}
where
\begin{equation*}
\begin{split}
A=&u_1u_4+u_2u_5+u_3u_6-u_1u_2u_6-u_1u_3u_5-u_2u_3u_4-u_4u_5u_6+u_1u_2u_3u_4u_5u_6,\\
B=&-\Big(u_1-\frac{1}{u_1}\Big)\Big(u_4-\frac{1}{u_4}\Big)-\Big(u_2-\frac{1}{u_2}\Big)\Big(u_5-\frac{1}{u_5}\Big)-\Big(u_3-\frac{1}{u_3}\Big)\Big(u_6-\frac{1}{u_6}\Big),\\
C=&\frac{1}{u_1u_4}+\frac{1}{u_2u_5}+\frac{1}{u_3u_6}-\frac{1}{u_1u_2u_6}-\frac{1}{u_1u_3u_5}-\frac{1}{u_2u_3u_4}-\frac{1}{u_4u_5u_6}+\frac{1}{u_1u_2u_3u_4u_5u_6}.
\end{split}
\end{equation*}
Let 
\begin{equation}\label{za}
z_\alpha=e^{-2\sqrt{-1}\xi(\alpha)}=\frac{-B+\sqrt{B^2-4AC}}{2A}
\end{equation}
and
$$z^*_\alpha=e^{-2\sqrt{-1}\xi^*(\alpha)}=\frac{-B-\sqrt{B^2-4AC}}{2A}.$$
Since the region of $\alpha$ is simply connected, we can choose the branch of $\sqrt{B^2-4AC}$ by analytic continuation. 
Then we have 
$$\frac{\partial U_\alpha}{\partial \xi}\Big|_{\xi=\xi(\alpha)}=4k\pi \quad\text{and}\quad\frac{\partial U_\alpha}{\partial \xi}\Big|_{\xi=\xi^*(\alpha)}=4k^*\pi $$
for some integers $k$ and $k^*.$ 

A direct computation shows that if $\alpha_1=\alpha_2=\cdots=\alpha_6=\pi,$ then $\xi(\alpha)=\frac{7\pi}{4}\in\big(\frac{3\pi}{2},2\pi\big)=(\max\{\tau_i\},\min\{\eta_j,2\pi\})$ and $\xi^*(\alpha)=\frac{5\pi}{4}\notin(\frac{3\pi}{2},2\pi).$ Hence for $\alpha$ real and of the hyperideal type, $\xi(\alpha)$ coincides with $\xi_\alpha$ in Costantino's result, and by (\ref{Costantino}) and the continuity of  $\frac{\partial U}{\partial \xi},$ we have
$$\frac{\partial U_\alpha}{\partial \xi}\Big|_{\xi=\xi(\alpha)}=0$$
for any $\alpha$ so that $\mathrm{Re}(\alpha)$ is of the hyperideal type, ie, $\xi(\alpha)$ is a critical point of $U_\alpha.$ 

\begin{remark} 
At this point, we do not know whether for any $\alpha$ so that $\mathrm{Re}(\alpha)$ is of the hyperideal type, $(\alpha,\xi(\alpha))$ always lies in $\mathrm{B_{H,\mathbb C}}.$ In Section \ref{Asy}, we will show that it does when $\theta_1,\dots,\theta_6$ are sufficiently small.
\end{remark}

For $\alpha\in\mathbb C^6$ so that $(\alpha,\xi(\alpha))\in\mathrm{B_{H,\mathbb C}},$ we define
\begin{equation}\label{W}
W(\alpha)=U(\alpha,\xi(\alpha)).
\end{equation}
 Then we have the key result of this section.

\begin{theorem}\label{co-vol} For a partition $(I,J)$ of $\{1,\dots,6\}$ and a fixed $(\theta_j)_{j\in J},$ 
$$W\big((\pi\pm \sqrt{-1} l_i)_{i\in I},(\pi\pm \theta_j)_{j\in J}\big)=2\pi^2+2\sqrt{-1}\cdot\mathrm{Cov}\big((l_i)_{i\in I}\big)$$
for all $(l_i)_{i\in I}\in \mathbb R^I_{>0},$ where $\mathrm{Cov}$ is the co-volume function defined in Definition \ref{cov}.
\end{theorem}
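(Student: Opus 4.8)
The plan is to regard both sides of the asserted equality as functions of $(l_i)_{i\in I}\in\mathbb R^I_{>0}$ with $(\theta_j)_{j\in J}$ fixed, and to show that they are holomorphic, that their first order partial derivatives in each $l_i$ agree, and that they agree at a single limit point; the equality then follows.

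First I would check that $W$ is holomorphic along the relevant locus and compute its derivative. Since $\mathrm{Re}(\alpha)$ is of hyperideal type when $\alpha=\big((\pi\pm\sqrt{-1}l_i)_{i\in I},(\pi\pm\theta_j)_{j\in J}\big)$, the point $\xi(\alpha)$ --- equivalently $z_\alpha=e^{-2\sqrt{-1}\xi(\alpha)}$, given by the branch (\ref{za}) of the root of (\ref{qe}) --- is a holomorphic function of $\alpha$ and a critical point of $U_\alpha$. Hence $W(\alpha)=U(\alpha,\xi(\alpha))$ is holomorphic, and since $\partial U/\partial\xi$ vanishes at $\xi=\xi(\alpha)$, the chain rule gives the envelope-type identity $\partial W/\partial\alpha_i=(\partial U/\partial\alpha_i)\big|_{\xi=\xi(\alpha)}$. (One also needs that $(\alpha,\xi(\alpha))\in\mathrm{B_{H,\mathbb C}}$ all along this locus so that $W$ is defined there; this is where the existence of $\Delta(\theta_I;\theta_J)$ --- equivalently the Gram matrix conditions of Proposition \ref{prop:criterion} --- enters.)

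The crux is the identity
\[
\Big(\frac{\partial U}{\partial\alpha_i}\Big)\Big|_{\xi=\xi(\alpha)}=\pm\,\theta_i\qquad\text{at}\quad \alpha=\big((\pi\pm\sqrt{-1}l_i)_{i\in I},(\pi\pm\theta_j)_{j\in J}\big),
\]
where $\theta_i$ is the dihedral angle at the edge of deep truncation $e_i$ of $\Delta(\theta_I;\theta_J)$, determined by the hyperbolic cosine law (\ref{cos1})--(\ref{cos}), and the sign agrees with the one chosen in $\pi\pm\sqrt{-1}l_i$. To prove it I would substitute $u_k=e^{\sqrt{-1}\alpha_k}$ (so $u_k=-e^{\mp l_k}$ for $k\in I$ and $u_k=-e^{\mp\sqrt{-1}\theta_k}$ for $k\in J$) together with $z=z_\alpha$ into the closed form (\ref{pUaC}) for $\partial U/\partial\alpha_i$, then use the critical equation in the form (\ref{pUC})$=1$ (equivalently the quadratic $Az_\alpha^2+Bz_\alpha+C=0$) to eliminate the factors $(1-z_\alpha u_?u_?u_?)$ from the numerator; after this reduction and trigonometric bookkeeping the argument of the logarithm collapses to an explicit rational expression in the $c_k$'s that, by direct comparison with (\ref{cos1}), equals $e^{\pm 2\sqrt{-1}\theta_i}$. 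I expect this calculation --- in particular keeping careful track of the branch of $\sqrt{B^2-4AC}$ in $z_\alpha$, of the square-root conventions in the $\Delta(\cdot,\cdot,\cdot)$'s, and of the Gram-matrix cofactors $G_{33},G_{34},G_{44}$ --- to be the principal obstacle; it is precisely the step that injects the geometry of the deeply truncated tetrahedron, via the cosine laws of \cite{GL2}, into the $6j$-symbol.

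Granting the identity, the chain rule gives $\partial W/\partial l_i=\pm\sqrt{-1}\,(\partial U/\partial\alpha_i)\big|_{\xi(\alpha)}=\sqrt{-1}\,\theta_i$, whereas $\partial/\partial l_i\big(2\pi^2+2\sqrt{-1}\,\mathrm{Cov}\big)=2\sqrt{-1}\cdot\frac{\theta_i}{2}=\sqrt{-1}\,\theta_i$ by Lemma \ref{Sch}. Hence $W-\big(2\pi^2+2\sqrt{-1}\,\mathrm{Cov}\big)$ is locally constant on the connected set $\mathbb R^I_{>0}$, and it remains to evaluate it at one point. I would let $l_i\to 0^+$ for every $i\in I$: then $\alpha$ tends to $\alpha_0=\big(\pi,\dots,\pi,(\pi\pm\theta_j)_{j\in J}\big)$, which lies in the real hyperideal locus $\mathrm{B_H}$, so by Costantino's result (\ref{VV}) and (\ref{UV}) the left-hand side tends to $2\pi^2+2\sqrt{-1}\,\mathrm{Vol}(\Delta_{|\pi-\alpha_0|})$; on the right-hand side $\theta_i\to 0$ by (\ref{cos1}) and $\mathrm{Cov}(l_I)\to\mathrm{Vol}(\Delta_{|\pi-\alpha_0|})$, since $\Delta(\theta_I;\theta_J)$ degenerates to $\Delta_{|\pi-\alpha_0|}$ as $l_I\to 0$, so the constant is $0$. (Alternatively one can argue by induction on $|I|$, continuing in a single $\alpha_{i_0}$ from the real value $\pi-\theta_{i_0}$ across the wall $\alpha_{i_0}=\pi$ to $\pi\pm\sqrt{-1}l_{i_0}$: on the wall the type-$(I,J)$ tetrahedron with $l_{i_0}=0$ coincides with the type-$(I\setminus\{i_0\},J\cup\{i_0\})$ tetrahedron with $\theta_{i_0}=0$, and $\sqrt{-1}\int_0^{l_{i_0}}(\partial U/\partial\alpha_{i_0})|_{\xi(\alpha)}\,dt=\sqrt{-1}\int_0^{l_{i_0}}\theta_{i_0}(t)\,dt$ accounts, by Lemma \ref{Sch}, for the difference of co-volumes, so the inductive step closes.) This proves the theorem.
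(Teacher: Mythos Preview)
Your global strategy---compute $\partial W/\partial l_i$ via the envelope identity, match it against $\partial\mathrm{Cov}/\partial l_i=\theta_i/2$ from Lemma~\ref{Sch}, and pin down the constant at $l_I\to 0$ using (\ref{UV})--(\ref{VV})---is exactly the paper's. The difference lies entirely in how the key identity $\partial W/\partial l_i=\sqrt{-1}\,\theta_i$ is established, and there your plan has a real gap.

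You propose to plug $z=z_\alpha$ into (\ref{pUaC}) and then ``use the critical equation (\ref{pUC})$=1$\dots\ to eliminate the factors $(1-z_\alpha u_?u_?u_?)$ from the numerator.'' But (\ref{pUC})$=1$ only gives a relation among the \emph{product} of all eight factors; it does not let you solve for the four that appear in $\partial U/\partial\alpha_1$. What the paper does instead is introduce the \emph{second} root $z^*_\alpha$ of the quadratic (\ref{qe}) and the companion $W^*(\alpha)=U(\alpha,\xi^*(\alpha))-4k^*\pi\xi^*(\alpha)$, then treat $F=\tfrac12(W-W^*)$ and $W+W^*$ separately. For $F$ the $z$-free part of (\ref{pUaC}) cancels and the remaining ratio becomes $(R+S)/(R-S)$ with $S$ proportional to $(u_1^{-1}-u_1)\sqrt{B^2-4AC}$ and $R$ proportional to the Gram cofactor $G_{34}$; together with $B^2-4AC=16\det G$ and (\ref{cos}) this gives $\partial F/\partial l_1=\sqrt{-1}\theta_1\pmod{\sqrt{-1}\pi}$. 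For $W+W^*$ one uses Vieta on the products $(1-z_\alpha\cdot)(1-z^*_\alpha\cdot)$ to turn everything into $z$-free rational expressions, and a direct check shows $\partial(W+W^*)/\partial l_1\equiv 0\pmod{\sqrt{-1}\pi}$. Combining the two yields $\partial W/\partial l_1=\sqrt{-1}\theta_1+\sqrt{-1}k\pi$, and the integer $k$ is killed by evaluating at $l_1=0$ (where (\ref{pUa}) shows the derivative is real and (\ref{cos1}) gives $\theta_1=0$). Without the two-root decomposition, the single-root expression you are left with still carries an unreduced $\sqrt{B^2-4AC}$ tangled with the other $(1-z_\alpha\cdot)$ factors, and there is no clean route from there to $e^{\pm 2\sqrt{-1}\theta_i}$; this, together with the $\pmod{\pi}$ ambiguity you do not address, is the missing idea.
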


\begin{proof} The proof follows the argument in \cite{MY, U, KM}: the key step is proving that $W$ satisfies the same differential identities as those of $\textrm{Cov}$ in Lemma \ref{Sch}. Without loss of generality assume that $1\in I.$ Let $W^*(\alpha)=U(\alpha,\xi^*(\alpha))-4k^*\pi \xi^*(\alpha).$ Then we have
\begin{equation}\label{former}
\frac{d W}{d \alpha_1}=\frac{\partial U}{\partial \alpha_1}\Big|_{\xi=\xi(\alpha)}+\frac{\partial U}{\partial \xi}\Big|_{\xi=\xi(\alpha)}\cdot\frac{\partial \xi(\alpha)}{\partial \alpha_1}=\frac{\partial U}{\partial \alpha_1}\Big|_{\xi=\xi(\alpha)},\end{equation}
and 
\begin{equation}\label{latter}
\frac{d W^*}{d \alpha_1}=\frac{\partial U}{\partial \alpha_1}\Big|_{\xi=\xi^*(\alpha)}+\frac{\partial U-4k^*\pi i\xi}{\partial \xi}\Big|_{\xi=\xi^*(\alpha)}\cdot\frac{\partial \xi(\alpha)}{\partial \alpha_1}=\frac{\partial U}{\partial \alpha_1}\Big|_{\xi=\xi^*(\alpha)}.
\end{equation}
Let 
$$F(\alpha)=\frac{1}{2}\big(W(\alpha)-W^*(\alpha)\big).$$
Then by (\ref{pUaC}), (\ref{former}) and (\ref{latter}),
\begin{equation*} 
\begin{split}
\frac{d F}{d \alpha_1}=&\frac{\sqrt{-1}}{2}\cdot\log\frac{(1-z_\alpha u_1u_2u_3)(1-z_\alpha u_1u_5u_6)(1-z^*_\alpha u_1u_2u_4u_5)(1-z^*_\alpha u_1u_3u_4u_6)}{(1-z^*_\alpha u_1u_2u_3)(1-z^*_\alpha u_1u_5u_6)(1-z_\alpha u_1u_2u_4u_5)(1-z_\alpha u_1u_3u_4u_6)}\quad\quad(\text{mod}\ \pi)
.\\
\end{split}
\end{equation*}

Let $\mathrm R$ and $\mathrm S$ respectively be the terms in $(1-z_\alpha u_1u_2u_3)(1-z_\alpha u_1u_5u_6)(1-z^*_\alpha u_1u_2u_4u_5)(1-z^*_\alpha u_1u_3u_4u_6)$ not containing and containing $\sqrt{B^2-4AC}.$ Then by a direct computation (see also \cite{MY, U}),
$$\mathrm S =\mathrm Q\big(u_1^{-1}-u_1\big)\sqrt{B^2-4AC}, $$
 where 
 $$\mathrm Q= \frac{1}{4A^2}u_1^2u_4^{-1}(u_4u_5-u_3)(u_3u_4-u_5)(u_2u_4-u_6)(u_4u_6-u_2),$$
and
 $$\mathrm R=8\mathrm QG_{34},$$
 where $G_{ij}$ is the $ij$-th cofactor of the Gram matrix $G$ in Section \ref{tetra}.
We also have
 $$B^2-4AC=16\det G.$$

If $\alpha_1=\pi+\sqrt{-1} l_1,$ then  $$\mathrm S=\mathrm Q\big(-2\sinh l_1\cdot\sqrt{16\det G}\big)=-8\mathrm Q\big(\sqrt{G_{34}^2-G_{33}G_{44}}\big).$$
Therefore,  we have
\begin{equation*}
\frac{d F}{d \alpha_1}=\frac{\sqrt{-1}}{2}\cdot\log\frac{G_{34}-\sqrt{G_{34}^2-G_{33}c_{44}}}{G_{34}+\sqrt{G_{34}^2-G_{33}G_{44}}} \quad\quad(\text{mod } \pi).
\end{equation*}
Since
$$\cos \theta_1 =\frac{G_{34}}{\sqrt{G_{33}G_{44}}},$$
we have
$$\frac{G_{34}-\sqrt{G_{34}^2-G_{33}c_{44}}}{G_{34}+\sqrt{G_{34}^2-G_{33}G_{44}}}=e^{-2\sqrt{-1}\theta_1}.$$
Then
\begin{equation*}
\frac{d F}{d \alpha_1}=\frac{\sqrt{-1}}{2}\cdot\log e^{-2\sqrt{-1}\theta_1}=\theta_1 \quad\quad(\text{mod } \pi)
\end{equation*}
and
\begin{equation}\label{partialViii}
\frac{\partial F}{\partial l_1}=\sqrt{-1}\cdot\frac{d F}{d \alpha_1}=\sqrt{-1}\theta_1 \quad\quad(\text{mod } \sqrt{-1}\pi ).
\end{equation}

 If $\alpha_1=\pi-\sqrt{-1} l_1,$ then 
  $$\mathrm S=\mathrm Q\big(2\sinh l_1\cdot\sqrt{16\det G}\big)=8\mathrm Q\big(\sqrt{G_{34}^2-G_{33}G_{44}}\big),$$
and
\begin{equation}\label{partialViiii}
\frac{\partial F}{\partial l_1}=-\sqrt{-1}\cdot\frac{d F}{d \alpha_1}=\frac{1}{2}\log\frac{G_{34}+\sqrt{G_{34}^2-G_{33}c_{44}}}{G_{34}-\sqrt{G_{34}^2-G_{33}G_{44}}}=\frac{1}{2}\log e^{2\sqrt{-1}\theta_1}=\sqrt{-1}\theta_1 \quad\quad(\text{mod } \sqrt{-1}\pi).
\end{equation}

Below a direct computation (see also \cite{MY}) shows that
\begin{equation}\label{partialW}
\frac{\partial}{\partial l_1}\big(W(\alpha)+W^*(\alpha)\big)=0 \quad\quad(\text{mod } \sqrt{-1}\pi).
\end{equation}
Indeed, it comes from the following calculation:
$$(1-z_\alpha u_1u_2u_4u_5)(1-z^*_\alpha u_1u_2u_4u_5)=\frac{1}{A}\frac{(u_1u_2u_4u_5)^2}{u_3u_6}\Big(1-\frac{u_3}{u_4u_5}\Big)\Big(1-\frac{u_6}{u_2u_4}\Big)\Big(1-\frac{u_6}{u_1u_5}\Big)\Big(1-\frac{u_3}{u_1u_2}\Big),$$
$$(1-z_\alpha u_1u_3u_4u_6)(1-z^*_\alpha u_1u_3u_4u_6)=\frac{1}{A}\frac{(u_1u_3u_4u_6)^2}{u_2u_5}\Big(1-\frac{u_2}{u_4u_6}\Big)\Big(1-\frac{u_5}{u_3u_4}\Big)\Big(1-\frac{u_2}{u_1u_3}\Big)\Big(1-\frac{u_5}{u_1u_6}\Big),$$
$$(1-z_\alpha u_1u_2u _3)(1-z^*_\alpha u_1u_2u_3)=\frac{1}{A }\frac{(u_1u_2u_3)^2}{u_4u_5u_6}\Big(1-\frac{u_4u_5}{u_3}\Big)\Big(1-\frac{u_4u_6}{u_2}\Big)\Big(1-\frac{u_5u_6}{u_1}\Big)\Big(1-\frac{1}{u_1u_2u_3}\Big),$$ and
$$(1-z_\alpha u_1u_5u_6)(1-z^*_\alpha u_1u_5u_6)=\frac{1}{A }\frac{(u_1u_5u_6)^2}{u_2u_3u_4}\Big(1-\frac{u_2u_4}{u_6}\Big)\Big(1-\frac{u_3u_4}{u_5}\Big)\Big(1-\frac{u_2u_3}{u_1}\Big)\Big(1-\frac{1}{u_1u_5u_6}\Big).$$
These, together with (\ref{pUaC}), (\ref{former}) and (\ref{latter}), imply (\ref{partialW}).

Putting  (\ref{partialViii}), (\ref{partialViiii}) and (\ref{partialW}) together, we have 
$$\frac{\partial W}{\partial l_1}=\sqrt{-1}\theta_1+\sqrt{-1} k\pi$$
for some integer $k.$
To find the value of $k,$ we take $\alpha_i=\pi$ for all $i\in I,$ ie, $l_i=0$ for all $i\in I.$ Then by (\ref{pUa}), $\frac{\partial W}{\partial l_1}=\sqrt{-1}\cdot\frac{\partial W}{\partial \alpha_1}$ is real. Also, by (\ref{cos1}), if $l_1=0$ then $\theta_1=0.$ This implies that $k=0,$ and 
\begin{equation}
\frac{\partial W}{\partial l_1}=\sqrt{-1}\theta_1.
\end{equation}

By exactly the same argument, we have 
\begin{equation}\label{Sch2}
\frac{\partial W}{\partial l_i}=\sqrt{-1}\theta_i
\end{equation}
for all $i\in I.$ 

By (\ref{UV}), (\ref{VV}) and Definition \ref{cov}, 
$$W\big((\pi)_{i\in I},(\pi\pm\theta_j)_{j\in J}\big)=2\pi^2+2\sqrt{-1}\cdot\mathrm{Vol}(\Delta_{(\mathbf 0;\theta_J)})=2\pi^2+2\sqrt{-1}\cdot\mathrm{Cov}(\mathbf 0).$$
Together with Lemma \ref{Sch} and (\ref{Sch2}), we have the result.
\end{proof}

\begin{remark} Theorem \ref{co-vol} also plays an essential role in the proof of the main theorems of \cite{WY2, Y}. 
\end{remark}

\begin{corollary}\label{cv1} Let $\Delta$ be the deeply truncated tetrahedron of type $(I,J)$ with $\{l_i\}_{i\in I}$ the lengths of the edges of deep truncation and $\{\theta_j\}_{j\in J}$ the dihedral angles at the regular edges. Then
$$\mathrm{Vol}(\Delta)=\frac{1}{2}\mathrm{Im}\bigg(W-\sum_{i\in I} l_i\frac{\partial W}{\partial l_i}\bigg)\bigg|_{\big((\pi\pm \sqrt{-1} l_i)_{i\in I},(\pi\pm \theta_j)_{j\in J}\big)}.$$
\end{corollary}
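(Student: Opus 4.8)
\textbf{Proof proposal for Corollary \ref{cv1}.}
The plan is to read off the statement directly from Theorem \ref{co-vol} together with the differential identity \eqref{Sch2} established in its proof, and then to unwind the definition of the co-volume function. First I would record the two outputs of Theorem \ref{co-vol} that are needed: evaluated at the point $\big((\pi\pm\sqrt{-1}l_i)_{i\in I},(\pi\pm\theta_j)_{j\in J}\big)$ one has $W=2\pi^2+2\sqrt{-1}\cdot\mathrm{Cov}\big((l_i)_{i\in I}\big)$, so that $\mathrm{Im}(W)=2\,\mathrm{Cov}\big((l_i)_{i\in I}\big)$; and from \eqref{Sch2} one has $\dfrac{\partial W}{\partial l_i}=\sqrt{-1}\,\theta_i$ at this point for every $i\in I$, so that $\mathrm{Im}\!\left(l_i\dfrac{\partial W}{\partial l_i}\right)=l_i\theta_i$.

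Next I would substitute these into the right-hand side of the claimed identity. Since $\mathrm{Im}$ is $\mathbb{R}$-linear and all the $l_i$ are real,
\begin{equation*}
\frac{1}{2}\,\mathrm{Im}\!\left(W-\sum_{i\in I}l_i\frac{\partial W}{\partial l_i}\right)
=\frac{1}{2}\left(2\,\mathrm{Cov}\big((l_i)_{i\in I}\big)-\sum_{i\in I}\theta_i l_i\right)
=\mathrm{Cov}\big((l_i)_{i\in I}\big)-\frac{1}{2}\sum_{i\in I}\theta_i l_i .
\end{equation*}
Finally, by Definition \ref{cov} the co-volume function is $\mathrm{Cov}(l_I)=\mathrm{Vol}+\tfrac{1}{2}\sum_{i\in I}\theta_i l_i$, where $\mathrm{Vol}=\mathrm{Vol}(\Delta)$ is the volume of the deeply truncated tetrahedron of type $(I,J)$ with edge lengths $\{l_i\}_{i\in I}$ and regular dihedral angles $\{\theta_j\}_{j\in J}$; subtracting $\tfrac{1}{2}\sum_{i\in I}\theta_i l_i$ from both sides gives exactly $\mathrm{Vol}(\Delta)$, which is the assertion.

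There is essentially no serious obstacle here; the corollary is a bookkeeping consequence of Theorem \ref{co-vol}. The only point that deserves a sentence of care is that the angles $\theta_i$ appearing in \eqref{Sch2} are genuinely the dihedral angles of $\Delta$ at the edges of deep truncation — this is precisely what was arranged in the proof of Theorem \ref{co-vol} through the cosine law \eqref{cos1}/\eqref{cos} — and that the evaluation point and the choice of the $\pm$ signs are the same ones used there, so that Definition \ref{cov} applies with the matching data. Once these identifications are in place, the computation above is immediate.
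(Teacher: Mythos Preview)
Your proof is correct and follows essentially the same route as the paper, which states only that the corollary is an immediate consequence of Theorem \ref{co-vol} and Lemma \ref{Sch}. The sole cosmetic difference is that you invoke \eqref{Sch2} directly rather than deriving it from Theorem \ref{co-vol} combined with Lemma \ref{Sch}; since these yield the identical relation $\partial W/\partial l_i=\sqrt{-1}\,\theta_i$, the two arguments are the same.
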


Corollary \ref{cv1} is an immediate consequence of Theorem \ref{co-vol} and Lemma \ref{Sch}. See also \cite{KM} for a different volume formula involving both roots of the quadratic equation (\ref{qe}).

\section{Computation of the discrete Fourier transforms}

\begin{proposition}\label{computation}  Let $(I,J)$ be a partition of $\{1,\dots,6\}.$ Then the discrete Fourier transform $\mathrm{\widehat {Y}}_r(b_I; a_J)$ at the root of unity $q=e^{\frac{2\pi \sqrt{-1}}{r}}$ can be computed as 
$$\mathrm{\widehat {Y}}_r(b_I; a_J)=\frac{(-1)^{|I|\big(\frac{r}{2}+1\big)}\cdot n(a_J)}{4\{1\}^{|I|-2}}\sum_{a_I,k_1,k_2}\Big(\sum_{\epsilon_I}g_r^{\epsilon_I}(a_I,k_1,k_2)\Big),$$
where $n(a_J)$ is the number of $3$-admissible colorings $c$ such that $c_j \equiv a_j\ (\text{mod } 2)$ for each $j\in J,$ $\epsilon_I=(\epsilon_i)_{i\in I}\in\{1,-1\}^I$ runs over all multi-signs, $a_I=(a_i)_{i\in I}$ runs over all multi-even integers in $\{0,2\dots,r-3\}$ so that the triples $(a_1, a_2, a_3),$ $(a_1, a_5, a_6),$ $(a_2,a_4,a_6)$ and $(a_3,a_4,a_5)$ are $r$-admissible, and $k_1$ and $k_2$ run over all the integers in between $\max\{T_i\}$ and $\min\{Q_j,r-2\}$ with
$$g_r^{\epsilon_I}(a_I,k_1,k_2)=e^{\sum_{i\in I}\epsilon_i\cdot\frac{2\sqrt{-1}\pi(a_i+b_i+1)}{r} +\frac{r}{4\pi \sqrt{-1}}\mathcal W_r^{\epsilon_I}\big(\frac{2\pi a_I}{r},\frac{2\pi k_1}{r},\frac{2\pi k_2}{r}\big)}$$
where $\frac{2\pi a_I}{r}=\big(\frac{2\pi a_i}{r}\big)_{i\in I}$ and 
\begin{equation*}
\begin{split}
\mathcal W_r^{\epsilon_{I}}(\alpha_I,\xi_1,\xi_2)=&-\sum_{i\in I}2\epsilon_i(\alpha_i-\pi)(\beta_i-\pi)+U_r(\alpha_1,\alpha_2,\dots,\alpha_6,\xi_1)+U_r(\alpha_1,\alpha_2,\dots,\alpha_6,\xi_2)
\end{split}
\end{equation*}
where $\alpha_I=(\alpha_i)_{i\in I}.$
\end{proposition}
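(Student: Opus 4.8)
The plan is to run the opening moves of Ohtsuki's method: turn the discrete Fourier transform into a finite sum of values of an explicit holomorphic function coming from $\varphi_r$. First I would substitute the single-sum formula of Proposition \ref{6jqd} for each quantum $6j$-symbol, so that its square becomes a double sum over $k_1,k_2$ of $e^{\frac r{4\pi\sqrt{-1}}(U_r(\frac{2\pi a_1}r,\dots,\frac{2\pi a_6}r,\frac{2\pi k_1}r)+U_r(\frac{2\pi a_1}r,\dots,\frac{2\pi k_2}r))}$, each $k_\ell$ ranging over the integers between $\max\{T_i\}$ and $\min\{Q_j,r-2\}$, and picking up the constant $\frac{\{1\}^2}4$. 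Then I would open up each Fourier weight: since $q=e^{2\pi\sqrt{-1}/r}$,
$$\mathrm H(a_i,b_i)=\frac{(-1)^{a_i+b_i}}{\{1\}}\Big(q^{(a_i+1)(b_i+1)}-q^{-(a_i+1)(b_i+1)}\Big)=\frac{(-1)^{a_i+b_i}}{\{1\}}\sum_{\epsilon_i\in\{1,-1\}}\epsilon_i\,q^{\epsilon_i(a_i+1)(b_i+1)},$$
which is where the multi-signs $\epsilon_I$ come from; hence $\prod_{i\in I}\mathrm H(a_i,b_i)$ equals $\{1\}^{-|I|}(-1)^{\sum_{i\in I}(a_i+b_i)}$ times $\sum_{\epsilon_I}\big(\prod_{i\in I}\epsilon_i\big)q^{\sum_{i\in I}\epsilon_i(a_i+1)(b_i+1)}$.

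The second step is the bookkeeping that recombines all exponentials into $g_r^{\epsilon_I}$ and the overall constant. With $\alpha_i=\frac{2\pi a_i}r$ and $\beta_i=\frac{2\pi b_i}r$, expand $(a_i+1)(b_i+1)=a_ib_i+(a_i+b_i)+1$: the $a_ib_i$ part, after completing the square, contributes $e^{\frac r{4\pi\sqrt{-1}}(-2\epsilon_i(\alpha_i-\pi)(\beta_i-\pi))}$ up to the scalars $(-1)^{a_i+b_i}$ and $e^{\epsilon_i\sqrt{-1}\pi r/2}$, while the $(a_i+b_i)+1$ part is exactly the outer phase $e^{\epsilon_i\frac{2\pi\sqrt{-1}(a_i+b_i+1)}r}$ of $g_r^{\epsilon_I}$; joining the $-2\epsilon_i(\alpha_i-\pi)(\beta_i-\pi)$ to the two copies of $U_r$ produces $\mathcal W_r^{\epsilon_I}$. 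Since $r$ is odd (so $e^{\sqrt{-1}\pi r}=-1$ and $e^{-\sqrt{-1}\pi r/2}=-e^{\sqrt{-1}\pi r/2}$), the pulled-out $(-1)^{a_i+b_i}$'s cancel, the factor $\prod_{i\in I}\epsilon_i$ cancels against $\prod_{i\in I}e^{\epsilon_i\sqrt{-1}\pi r/2}$, and what remains collapses into $\frac{(-1)^{|I|(r/2+1)}}{4\{1\}^{|I|-2}}$. After this one has $\widehat Y_r(b_I;a_J)$ written exactly as in the statement, except with $a_I$ still ranging over \emph{all} $r$-admissible colorings and without the factor $n(a_J)$.

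The final, and hardest, step is to pass from the sum over all $r$-admissible $a_I$ to $n(a_J)$ copies of the sum over the \emph{even} $r$-admissible ones. The mechanism is a reflection/parity argument. Because $q^r=1$, one has $q^{-(a_i+1)(b_i+1)}=q^{((r-2-a_i)+1)(b_i+1)}$, so a sign $\epsilon_i=-1$ is, at the level of colors, the reflection $a_i\mapsto r-2-a_i$, which toggles parity as $r$ is odd; and the Kauffman bracket — hence $\mathrm H$ and the square of the $6j$-symbol — is invariant under reflecting the colors along any cycle of the tetrahedral graph (equivalently along any element of its cycle space, such as one of the four triangles $\{4,5,6\},\{2,3,4\},\{1,3,5\},\{1,2,6\}$), since at each vertex two colors get reflected and one does not, and the admissibility conditions $a_i+a_j+a_k\le 2(r-2)$ and $a_k\le a_i+a_j$ simply trade places. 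Matching each $r$-admissible coloring of the $I$-edges with an even one via such reflections, and noting that the cycles of the tetrahedral graph are in bijection with its $3$-admissible colorings, the multiplicity with which each even term is reached works out to $n(a_J)$; the degenerate ranges, in which $n(a_J)=0$, are consistent because both sides then vanish. Combining this with the identities of the previous step gives the asserted formula.

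I expect this last combinatorial step to be the real obstacle: one must keep the parities, the reflection-versus-sign dictionary, and the cycle symmetry of the tetrahedral graph perfectly synchronized so that the count comes out as exactly $n(a_J)$, and one must handle the boundary cases where admissibility forces $T_i$ and $Q_j$ to be half-integers and the ``$6j$-symbol'' interpretation of the $U_r$-sum breaks down. By contrast, the root-of-unity and sign accounting of the middle step, although lengthy, is entirely mechanical — and it is there that the hypothesis that $r$ is odd enters essentially.
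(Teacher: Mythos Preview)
Your proposal is correct and follows essentially the same route as the paper: substitute Proposition~\ref{6jqd} for each $6j$-symbol, expand each $\mathrm H(a_i,b_i)$ as a sum over signs $\epsilon_i$ recentered at $r/2$, and reduce the sum over all $r$-admissible $a_I$ to $n(a_J)$ copies of the sum over even $a_I$.

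Two small differences are worth noting. First, for the middle step the paper bypasses your expand--and--cancel manoeuvre with a single identity,
\[
(-1)^{a+b}q^{\pm(a+1)(b+1)}=\mp(-1)^{r/2}\,q^{\pm\big((a-\frac r2)(b-\frac r2)+a+b+1\big)},
\]
which turns the difference in $\mathrm H$ directly into a \emph{sum} over $\epsilon_i$ with no residual $\prod_i\epsilon_i$ factor to track; this is equivalent to your cancellation $\epsilon_i\cdot e^{-\epsilon_i\pi\sqrt{-1}r/2}=-e^{\pi\sqrt{-1}r/2}$ but tidier. Second, for the $n(a_J)$ reduction the paper simply cites \cite[Lemma~A.4, Theorem~2.9]{DKY} rather than reproving it; your sketch via the cycle-space symmetry of the tetrahedral graph (reflections $a_i\mapsto r-2-a_i$ along cycles, under which both $\mathrm H$ and the squared $6j$-symbol are invariant) is the right mechanism and is indeed what underlies that reference. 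Your worry about ``$T_i,Q_j$ half-integers'' is unfounded, since admissibility forces the triple sums to be even; the genuine subtlety you should watch there is that Proposition~\ref{6jqd} gives a different $U_r$ when the $6$-tuple is not of hyperideal type, but this does not affect the parity-reduction argument, which operates on the $6j$-symbol itself rather than on the particular form of $U_r$.
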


\begin{proof}  First, we observe that if we let the summation in the definition of $\mathrm{\widehat {Y}}_r(b_I; a_J)$ be over all multi-even integers $a_I$ instead of multi-integers, then the resulting quantity differs from $\mathrm{\widehat {Y}}_r(b_I; a_J)$ by a factor $n(a_J)$ by \cite[Lemma A.4, Theorem 2.9 and its proof]{DKY}.  Next, we observe that
$$(-1)^{a+b}q^{(a+1)(b+1)}=-(-1)^{\frac{r}{2}}q^{\big((a-\frac{r}{2})(b-\frac{r}{2})+a+b+1\big)},$$
and
$$(-1)^{a+b}q^{-(a+1)(b+1)}=(-1)^{\frac{r}{2}}q^{-\big((a-\frac{r}{2})(b-\frac{r}{2})+a+b+1\big)}.$$
As a consequence, we have
\begin{equation*}
\begin{split}
\mathrm H(a_i,b_i)=&\frac{1}{q-q^{-1}}\bigg((-1)^{a_i+b_i}q^{(a_i+1)(b_i+1)}-(-1)^{a_i+b_i}q^{-(a_i+1)(b_i+1)}\bigg)\\
=&\frac{-(-1)^{\frac{r}{2}}}{q-q^{-1}}\bigg(q^{\big((a_i-\frac{r}{2})(b_i-\frac{r}{2})+a_i+b_i+1\big)}+q^{-\big((a_i-\frac{r}{2})(b_i-\frac{r}{2})+a_i+b_i+1\big)}\bigg)\\
=&\frac{(-1)^{\frac{r}{2}+1}}{\{1\}}\sum_{\epsilon_i\in\{-1,1\}}q^{\epsilon_i\big((a_i-\frac{r}{2})(b_i-\frac{r}{2})+a_i+b_i+1\big)}\\
=&\frac{(-1)^{\frac{r}{2}+1}}{\{1\}}\sum_{\epsilon_i\in\{-1,1\}}e^{\epsilon_i\frac{2\sqrt{-1}\pi(a_i+b_i+1)}{r}+\frac{r}{4\pi\sqrt{-1}}\Big(-2\epsilon_i\big(\frac{2\pi a_i}{r}-\pi\big)\big(\frac{2\pi b_i}{r}-\pi\big)\Big)},\\
\end{split}
\end{equation*}
and hence
\begin{equation*}
\begin{split}\prod_{i\in I}\mathrm H(a_i,b_i)=&\frac{(-1)^{|I|\big(\frac{r}{2}+1\big)}}{\{1\}^{|I|}}\sum_{\epsilon_I\in\{-1,1\}^{|I|}}e^{\sum_{i\in I}\epsilon_i\frac{2\sqrt{-1}\pi(a_i+b_i+1)}{r}+\frac{r}{4\pi\sqrt{-1}}\sum_{i\in I}\Big(-2\epsilon_i\big(\frac{2\pi a_i}{r}-\pi\big)\big(\frac{2\pi b_i}{r}-\pi\big)\Big)}\\
=&\frac{(-1)^{|I|\big(\frac{r}{2}+1\big)}}{\{1\}^{|I|}}\sum_{\epsilon_I\in\{-1,1\}^{|I|}}e^{\sum_{i\in I}\epsilon_i\sqrt{-1}(\alpha_i+\beta_i+\frac{2\pi}{r})+\frac{r}{4\pi\sqrt{-1}}\sum_{i\in I}\big(-2\epsilon_i(\alpha_i-\pi)(\beta_i-\pi)\big)}.\\
\end{split}
\end{equation*}
Then the result follows from Proposition \ref{6jqd}.
\end{proof}

We notice that the summation in Proposition \ref{computation} is finite, and to use the Poisson Summation Formula, we need an infinite sum over integral points. To this end, we consider the following regions and a bump function over them. 

Let $\alpha_i=\frac{2\pi a_i}{r}$ for $i=1,\dots,6,$ $\beta_i=\frac{2\pi b_i}{r}$ for $i\in I,$ $\xi_s=\frac{2\pi k_s}{r}$ for $s=1,2,$ $\tau_i=\frac{2\pi T_i}{r}$ for $i=1,\dots,4,$ and $\eta_j=\frac{2\pi Q_j}{r}$ for $j=1,2,3.$ For a fixed $(\alpha_j)_{j\in J},$ let
$$\mathrm {D_A}=\Big\{(\alpha_I,\xi_1,\xi_2)\in\mathbb R^{|I|+2}\ \Big|\ (\alpha_1,\alpha_2,\dots,\alpha_6) \text{ is admissible, } \max\{\tau_i\}\leqslant \xi_s\leqslant \min\{\eta_j, 2\pi\}, s=1,2\Big\},$$
and let
$$\mathrm {D_H}=\Big\{(\alpha_I,\xi_1,\xi_2)\in\mathrm {D_A} \ \Big|\ (\alpha_1,\alpha_2,\dots,\alpha_6) \text{ is of the hyperideal type} \Big\}.$$
For a sufficiently small $\delta >0,$ let 
$$\mathrm {D_H^\delta}=\Big\{(\alpha_I,\xi_1,\xi_2)\in\mathrm {D_H}\ \Big|\ d((\alpha_I,\xi_1,\xi_2), \partial\mathrm {D_H})>\delta \Big\},$$
where $d$ is the Euclidean distance on $\mathbb R^n.$
We let $\psi:\mathbb R^{|I|+2}\to\mathbb R$ be a $C^{\infty}$-smooth bump function supported on $(\mathrm{D_H}, \mathrm{D_H^\delta}),$ ie,  \begin{equation*}
\left \{\begin{array}{rl}
\psi(\alpha_I,\xi_1,\xi_2)=1, & (\alpha_I,\xi_1,\xi_2)\in \overline{\mathrm{D_H^\delta}}\\
0<\psi(\alpha_I,\xi_1,\xi_2)<1, &  (\alpha_I,\xi_1,\xi_2)\in \mathrm{D_H}\setminus \overline{\mathrm{D_H^\delta}}\\
\psi(\alpha_I,\xi_1,\xi_2)=0, & (\alpha_I,\xi_1,\xi_2)\notin \mathrm{D_H},\\
\end{array}\right.
\end{equation*}
and let 
$$f^{\epsilon_I}_r(a_I,k_1,k_2)=\psi\Big(\frac{2\pi a_I}{r},\frac{2\pi k_1}{r},\frac{2\pi k_2}{r}\Big)g^{\epsilon_I}_r(a_I,k_1,k_2).$$

In Proposition \ref{computation}, the sum is over multi-even integers  $a_I.$ On the other hand, to use the Poisson Summation Formula, we need a sum over all integers. For this purpose, we for each $i\in I$ let $a_i=2a_i',$  $a_I'=(a_i')_{i\in I}$ and denote $(2a_i')_{i\in I}$ by $2a_I'.$ Then by Proposition \ref{computation}, 
$$\mathrm{\widehat {Y}}_r(b_I;a_J)=\frac{(-1)^{|I|\big(\frac{r}{2}+1\big)}\cdot n(a_J)}{4\{1\}^{|I|-2}}\sum_{(a_I',k_1,k_2)\in\mathbb Z^{|I|+2}}\Big(\sum_{\epsilon_I\in\{1,-1\}^I} f_r^{\epsilon_I}\big(2a_I',k_1,k_2\big)\Big)+\text{error term}.$$
Let $$f_r=\sum_{\epsilon_I\in\{1,-1\}^I} f_r^{\epsilon_I}.$$ Then
$$\mathrm{\widehat {Y}}_r(b_I;a_J)=\frac{(-1)^{|I|\big(\frac{r}{2}+1\big)}\cdot n(a_J)}{4\{1\}^{|I|-2}}\sum_{(a_I',k_1,k_2)\in\mathbb Z^{|I|+2}}f_r\big(2a_I',k_1,k_2\big)+\text{error term}.$$

Since $f_r$ is $C^{\infty}$-smooth and equals zero out of $\mathrm{D_H},$ it is in the Schwartz space on $\mathbb R^{|I|+2}.$ Then by the Poisson Summation Formula (see e.g. \cite[Theorem 3.1]{SS}),
$$\sum_{(a_I',k_1,k_2)\in\mathbb Z^{|I|+2}}f_r\big(2a_I',k_1,k_2\big)=\sum_{(m_I,n_1,n_2)\in\mathbb Z^{|I|+2}}\widehat {f_r}(m_I,n_1,n_2),$$
where $m_I=(m_i)_{i\in I}\in \mathbb Z^I$ and  $\widehat f_r(m_I,n_1,n_2)$ is the $(m_I,n_1,n_2)$-th Fourier coefficient of $f_r$ defined by
\begin{equation*}
\begin{split}
\widehat {f_r}(m_I,n_1,n_2)=\int_{\mathbb R^{|I|+2}}&f_r\big(2a_I',k_1,k_2\big)e^{\sum_{i\in I}2\pi \sqrt{-1}m_ia_i'+2\pi \sqrt{-1}n_1k_1+2\pi \sqrt{-1}n_2k_2}da'_Idk_1dk_2,
\end{split}
\end{equation*}
where $da'_I=\prod_{i\in I}da'_i.$

By the change of variable, and by changing $2a_i'$ back to $a_i,$ the Fourier coefficients can be computed as
\begin{proposition}\label{4.2}
$$\widehat{f_r}(m_I,n_1,n_2)=\sum_{\epsilon_I\in\{1,-1\}^I} \widehat{f^{\epsilon_I}_r}(m_I,n_1,n_2)$$
with
\begin{equation*}
\begin{split}
\widehat{f^{\epsilon_I}_r}(m_I,n_1,n_2)=\frac{r^{|I|+2}}{2^{2|I|+2}\cdot\pi^{|I|+2}}&\int_{\mathrm{D_H}}\psi(\alpha_I,\xi_1,\xi_2)e^{\sum_{i\in I}\epsilon_i\sqrt{-1}(\alpha_i+\beta_i+\frac{2\pi}{r})}\\ 
&\cdot e^{\frac{r}{4\pi \sqrt{-1}}\big(\mathcal W_r^{\epsilon_I}(\alpha_I,\xi_1,\xi_2)-\sum_{i\in I}2\pi m_i\alpha_i-4\pi n_1\xi_1-4\pi n_2\xi_2\big)}d\alpha_Id\xi_1d\xi_2,
\end{split}
\end{equation*}
where $d\alpha_I=\prod_{i\in I}d\alpha_i$ and
$$\mathcal W_r^{\epsilon_I}(\alpha_I,\xi_1,\xi_2)=-\sum_{i\in I}2\epsilon_i(\alpha_i-\pi)(\beta_i-\pi)+U_r(\alpha_1,\dots,\alpha_6,\xi_1)+U_r(\alpha_1,\dots,\alpha_6,\xi_2).$$
In particular,
\begin{equation*}
\begin{split}
\widehat{f^{\epsilon_I}_r}(0,\dots,0)=\frac{r^{|I|+2}}{2^{2|I|+2}\cdot\pi^{|I|+2}}\int_{\mathrm{D_H}}\psi(\alpha_I,\xi_1,\xi_2)e^{\sum_{i\in I}\epsilon_i\sqrt{-1}(\alpha_i+\beta_i+\frac{2\pi}{r})+\frac{r}{4\pi \sqrt{-1}}\mathcal W_r^{\epsilon_I}(\alpha_I,\xi_1,\xi_2)}d\alpha_Id\xi_1d\xi_2.
\end{split}
\end{equation*}
\end{proposition}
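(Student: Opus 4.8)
The plan is to prove Proposition \ref{4.2} by unwinding the definitions and performing a single linear change of variables, since after Proposition \ref{computation} the function $f_r$ is completely explicit. Because $f_r=\sum_{\epsilon_I}f_r^{\epsilon_I}$ and the Fourier coefficient is linear in the summand, it suffices to compute $\widehat{f_r^{\epsilon_I}}(m_I,n_1,n_2)$ for a single multi-sign $\epsilon_I$ and sum over $\epsilon_I$ at the end. For fixed $\epsilon_I$, I would substitute
$$f_r^{\epsilon_I}(2a_I',k_1,k_2)=\psi\Big(\tfrac{4\pi a_I'}{r},\tfrac{2\pi k_1}{r},\tfrac{2\pi k_2}{r}\Big)\,g_r^{\epsilon_I}(2a_I',k_1,k_2)$$
into the integral defining $\widehat{f_r^{\epsilon_I}}$ and carry out the change of variables $\alpha_i=\tfrac{4\pi a_i'}{r}=\tfrac{2\pi a_i}{r}$ for $i\in I$ and $\xi_s=\tfrac{2\pi k_s}{r}$ for $s=1,2$, with the remaining coordinates $\alpha_j$, $j\in J$, held fixed at their prescribed values (so that $\mathcal W_r^{\epsilon_I}$ and $U_r$ are regarded as functions of $(\alpha_I,\xi_1,\xi_2)$ with the $\alpha_j$ as parameters). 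This is legitimate because, by the discussion preceding the statement, $f_r$, and hence each $f_r^{\epsilon_I}$, is $C^\infty$ and supported in $\overline{\mathrm{D_H}}$, in particular Schwartz, so every integral in sight converges absolutely.

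Next I would track the three pieces of the integrand separately. The Jacobian contributes $\big(\tfrac{r}{4\pi}\big)^{|I|}\big(\tfrac{r}{2\pi}\big)^2=\tfrac{r^{|I|+2}}{2^{2|I|+2}\pi^{|I|+2}}$, which is exactly the prefactor in the statement. The linear phase already carried inside $g_r^{\epsilon_I}$ becomes $\sum_{i\in I}\epsilon_i\tfrac{2\sqrt{-1}\pi(a_i+b_i+1)}{r}=\sum_{i\in I}\epsilon_i\sqrt{-1}\big(\alpha_i+\beta_i+\tfrac{2\pi}{r}\big)$, reproducing the factor $e^{\sum_{i\in I}\epsilon_i\sqrt{-1}(\alpha_i+\beta_i+2\pi/r)}$. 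Using $a_i'=\tfrac{r\alpha_i}{4\pi}$ and $k_s=\tfrac{r\xi_s}{2\pi}$, the Poisson phase rewrites as
$$2\pi\sqrt{-1}\Big(\sum_{i\in I}m_ia_i'+n_1k_1+n_2k_2\Big)=\frac{r}{4\pi\sqrt{-1}}\Big(-\sum_{i\in I}2\pi m_i\alpha_i-4\pi n_1\xi_1-4\pi n_2\xi_2\Big),$$
which merges with the term $\tfrac{r}{4\pi\sqrt{-1}}\mathcal W_r^{\epsilon_I}(\alpha_I,\xi_1,\xi_2)$ inside $g_r^{\epsilon_I}$ into the single exponential displayed in the statement. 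Since $\psi$ vanishes outside $\mathrm{D_H}$, the domain of integration shrinks from $\mathbb R^{|I|+2}$ to $\mathrm{D_H}$; summing over $\epsilon_I$ yields the formula for $\widehat{f_r}$, and the concluding ``in particular'' line is the specialization $m_I=0$, $n_1=n_2=0$.

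I do not anticipate a genuine obstacle; the argument is pure bookkeeping, and the only points that require attention are routine. First, the substitution $a_i=2a_i'$ feeds both the Jacobian (via $da_i'=\tfrac{r}{4\pi}d\alpha_i$) and the Poisson phase $e^{2\pi\sqrt{-1}m_ia_i'}$, so the factor of $\tfrac12$ must be inserted consistently in both places; this is precisely what produces the power $2^{2|I|+2}\pi^{|I|+2}$ in the prefactor. Second, the sign appearing in the rewritten Poisson phase is immaterial, since the Poisson Summation Formula is applied as a sum over all of $\mathbb Z^{|I|+2}$, so the $e^{+2\pi\sqrt{-1}\langle\cdot,\cdot\rangle}$ convention used above is self-consistent with the one in the Poisson Summation Formula as stated. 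Third, one must keep straight which linear terms land outside the factor $\tfrac{r}{4\pi\sqrt{-1}}(\cdots)$ (the $\epsilon_i$-term) and which land inside it (the $m_i$- and $n_s$-terms), as dictated by the normalizations of $g_r^{\epsilon_I}$ and $\mathcal W_r^{\epsilon_I}$ fixed in the proof of Proposition \ref{computation}.
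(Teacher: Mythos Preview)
Your proposal is correct and is precisely the argument the paper intends: the paper's own proof consists of the single sentence ``By the change of variable, and by changing $2a_i'$ back to $a_i$, the Fourier coefficients can be computed as\ldots'', and what you have written is a careful unpacking of exactly that change of variables, with the Jacobian, linear phase, and Poisson phase tracked correctly.
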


\begin{proposition}\label{Poisson}
$$\mathrm{\widehat {Y}}_r(b_I; a_J)=\frac{(-1)^{|I|\big(\frac{r}{2}+1\big)}\cdot n(a_J)}{4\{1\}^{|I|-2}}\sum_{(m_I,n_1,n_2)\in\mathbb Z^{|I|+2}}\widehat{ f_r}(m_I,n_1,n_2)+\text{error term}.$$
\end{proposition}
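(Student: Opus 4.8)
The plan is to assemble three ingredients already set up above: the finite-sum formula of Proposition~\ref{computation}, the smooth-truncation construction built from the bump function $\psi$, and the Poisson Summation Formula.

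First I would start from Proposition~\ref{computation}, which expresses $\mathrm{\widehat{Y}}_r(b_I;a_J)$, up to the constant $\frac{(-1)^{|I|(\frac r2+1)}\,n(a_J)}{4\{1\}^{|I|-2}}$, as a \emph{finite} sum of $\sum_{\epsilon_I}g_r^{\epsilon_I}(a_I,k_1,k_2)$ over multi-even $a_I$ with the four relevant triples $r$-admissible and over $k_1,k_2$ ranging between $\max\{T_i\}$ and $\min\{Q_j,r-2\}$. Substituting $a_i=2a_i'$ turns the $a_I$-sum into a sum over a finite subset of $\mathbb Z^I$ (and $(k_1,k_2)$ over a finite subset of $\mathbb Z^2$). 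By construction $f_r=\sum_{\epsilon_I}f_r^{\epsilon_I}=\sum_{\epsilon_I}\psi\cdot g_r^{\epsilon_I}$ agrees with this summand at every integer point $(2a_I',k_1,k_2)\in\overline{\mathrm{D_H^\delta}}$, vanishes identically off $\mathrm{D_H}$, and on the collar $\mathrm{D_H}\setminus\overline{\mathrm{D_H^\delta}}$ equals the summand damped by a factor in $[0,1)$. Hence $\sum_{(a_I',k_1,k_2)\in\mathbb Z^{|I|+2}}f_r(2a_I',k_1,k_2)$ differs from the finite sum of Proposition~\ref{computation} only by the contributions of (i) integer points in $\mathrm{D_A}\setminus\mathrm{D_H}$, i.e. the $6$-tuples admissible but not of hyperideal type, (ii) the collar points in $\mathrm{D_H}\setminus\overline{\mathrm{D_H^\delta}}$, and (iii) the remaining admissible points outside $\mathrm{D_H}$ where $\psi$ has already vanished; I bundle all of these into the quantity labelled ``error term'', to be controlled in Sections~\ref{ot} and \ref{ee}.

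Next, since $f_r$ is $C^\infty$-smooth and compactly supported (it is the product of the smooth bump function $\psi$ with $g_r^{\epsilon_I}$ and vanishes off $\mathrm{D_H}$), it lies in the Schwartz space of $\mathbb R^{|I|+2}$, and the Poisson Summation Formula (\cite[Theorem~3.1]{SS}) applies:
$$\sum_{(a_I',k_1,k_2)\in\mathbb Z^{|I|+2}}f_r(2a_I',k_1,k_2)=\sum_{(m_I,n_1,n_2)\in\mathbb Z^{|I|+2}}\widehat{f_r}(m_I,n_1,n_2),$$
with $\widehat{f_r}$ the Fourier coefficients defined above and computed, after the change of variables $a_i=2a_i'$, in Proposition~\ref{4.2}. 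Multiplying through by the prefactor $\frac{(-1)^{|I|(\frac r2+1)}\,n(a_J)}{4\{1\}^{|I|-2}}$ yields exactly the asserted identity.

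The only genuinely delicate point is the bookkeeping for the error term: one must eventually check that replacing the sharp cutoff to the hyperideal region by the smooth $\psi$, and discarding the admissible-but-not-hyperideal terms, leaves a quantity that is negligible once the leading asymptotics have been extracted. At the level of this proposition, however, ``error term'' is merely a name for that sum of discarded contributions, and the statement itself is the purely formal combination of Proposition~\ref{computation}, the definition of $f_r$, and Poisson summation just described; the analytic estimates are deferred to Sections~\ref{ot} and \ref{ee}.
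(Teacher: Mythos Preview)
Your proposal is correct and follows exactly the same route as the paper: the proposition is simply the combination of Proposition~\ref{computation}, the smooth truncation by $\psi$ defining $f_r$, the change of variable $a_i=2a_i'$, and the Poisson Summation Formula, with the discrepancy introduced by the bump function absorbed into the ``error term.'' One small correction: the error term here is estimated only in Section~\ref{ee} (Proposition~\ref{error}); Section~\ref{ot} handles the non-leading Fourier coefficients $\widehat{f_r}(m_I,n_1,n_2)$ with $(m_I,n_1,n_2)\neq 0$, which is a separate issue.
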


We will estimate the leading Fourier coefficients, the non-leading Fourier coefficients and the error term respectively in Sections \ref{leading}, \ref{ot} and \ref{ee}, and prove Theorem \ref{main} in Section \ref{pf}.


\section{Asymptotics}\label{Asy}

\begin{proposition}\label{saddle}
Let $D_{\mathbf z}$ be a region in $\mathbb C^n$ and let $D_{\mathbf a}$ be a region in $\mathbb R^k.$ Let $f(\mathbf z,\mathbf a)$ and $g(\mathbf z,\mathbf a)$ be complex valued functions on $D_{\mathbf z}\times D_{\mathbf a}$  which are holomorphic in $\mathbf z$ and smooth in $\mathbf a.$ For each positive integer $r,$ let $f_r(\mathbf z,\mathbf a)$ be a complex valued function on $D_{\mathbf z}\times D_{\mathbf a}$ holomorphic in $\mathbf z$ and smooth in $\mathbf a.$
For a fixed $\mathbf a\in D_{\mathbf a},$ let $f^{\mathbf a},$ $g^{\mathbf a}$ and $f_r^{\mathbf a}$ be the holomorphic functions  on $D_{\mathbf z}$ defined by
$f^{\mathbf a}(\mathbf z)=f(\mathbf z,\mathbf a),$ $g^{\mathbf a}(\mathbf z)=g(\mathbf z,\mathbf a)$ and $f_r^{\mathbf a}(\mathbf z)=f_r(\mathbf z,\mathbf a).$ Suppose $\{\mathbf a_r\}$ is a convergent sequence in $D_{\mathbf a}$ with $\lim_r\mathbf a_r=\mathbf a_0,$ $f_r^{\mathbf a_r}$ is of the form
$$ f_r^{\mathbf a_r}(\mathbf z) = f^{\mathbf a_r}(\mathbf z) + \frac{\upsilon_r(\mathbf z,\mathbf a_r)}{r^2},$$
$\{S_r\}$ is a sequence of embedded real $n$-dimensional closed disks in $D_{\mathbf z}$ sharing the same boundary, and $\mathbf c_r$ is a point on $S_r$ such that $\{\mathbf c_r\}$ is convergent in $D_{\mathbf z}$ with $\lim_r\mathbf c_r=\mathbf c_0.$ If for each $r$
\begin{enumerate}[(1)]
\item $\mathbf c_r$ is a critical point of $f^{\mathbf a_r}$ in $D_{\mathbf z},$
\item $\mathrm{Re}f^{\mathbf a_r}(\mathbf c_r) > \mathrm{Re}f^{\mathbf a_r}(\mathbf z)$ for all $\mathbf z \in S\setminus \{\mathbf c_r\},$
\item the Hessian matrix $\mathrm{Hess}(f^{\mathbf a_r})$ of $f^{\mathbf a_r}$ at $\mathbf c_r$ is non-singular,
\item $|g^{\mathbf a_r}(\mathbf c_r)|$ is bounded from below by a positive constant independent of $r,$
\item $|\upsilon_r(\mathbf z, \mathbf a_r)|$ is bounded from above by a constant independent of $r$ on $D_{\mathbf z},$ and
\item  the Hessian matrix $\mathrm{Hess}(f^{\mathbf a_0})$ of $f^{\mathbf a_0}$ at $\mathbf c_0$ is non-singular,
\end{enumerate}
then
\begin{equation*}
\begin{split}
 \int_{S_r} g^{\mathbf a_r}(\mathbf z) e^{rf_r^{\mathbf a_r}(\mathbf z)} d\mathbf z= \Big(\frac{2\pi}{r}\Big)^{\frac{n}{2}}\frac{g^{\mathbf a_r}(\mathbf c_r)}{\sqrt{-\det\mathrm{Hess}(f^{\mathbf a_r})(\mathbf c_r)}} e^{rf^{\mathbf a_r}(\mathbf c_r)} \Big( 1 + O \Big( \frac{1}{r} \Big) \Big).
 \end{split}
 \end{equation*}
\end{proposition}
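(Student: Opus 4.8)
My plan is to reduce the statement to the standard multidimensional saddle point (steepest descent) estimate, carried out with enough uniformity in $r$ to absorb the fact that the phase $f^{\mathbf a_r}$, the contour $S_r$ and the saddle point $\mathbf c_r$ all move with $r$. First I would dispose of the $r^{-2}$ correction: writing $e^{rf_r^{\mathbf a_r}(\mathbf z)}=e^{rf^{\mathbf a_r}(\mathbf z)}e^{\upsilon_r(\mathbf z,\mathbf a_r)/r}$ and using hypothesis (5), so that $\upsilon_r(\cdot,\mathbf a_r)$ is holomorphic on $D_{\mathbf z}$ and uniformly bounded, one gets $e^{\upsilon_r/r}=1+\upsilon_r h_r/r$ with $h_r$ holomorphic and uniformly bounded, hence
\[
\int_{S_r}g^{\mathbf a_r}e^{rf_r^{\mathbf a_r}}\,d\mathbf z=\int_{S_r}g^{\mathbf a_r}e^{rf^{\mathbf a_r}}\,d\mathbf z+\frac1r\int_{S_r}g^{\mathbf a_r}\upsilon_r h_r\,e^{rf^{\mathbf a_r}}\,d\mathbf z .
\]
Since $g^{\mathbf a_r}\upsilon_r h_r$ is again holomorphic and uniformly bounded, it suffices to prove the asymptotic expansion for $\int_{S_r}g^{\mathbf a_r}e^{rf^{\mathbf a_r}}\,d\mathbf z$ and, along the way, the matching upper bound $O\!\big((2\pi/r)^{n/2}e^{r\,\mathrm{Re}f^{\mathbf a_r}(\mathbf c_r)}\big)$ for the same integral with $g^{\mathbf a_r}$ replaced by an arbitrary holomorphic uniformly bounded amplitude; the second term above is then of relative size $O(1/r)$.

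Next I would localize near the limiting saddle $\mathbf c_0$. By hypothesis (1) $\mathbf c_r$ is a critical point of $f^{\mathbf a_r}$; by (3) and (6) the Hessians $\mathrm{Hess}(f^{\mathbf a_r})(\mathbf c_r)$ and $\mathrm{Hess}(f^{\mathbf a_0})(\mathbf c_0)$ are non-singular and depend continuously on $r$ through $(\mathbf a_r,\mathbf c_r)\to(\mathbf a_0,\mathbf c_0)$. Applying the holomorphic Morse lemma with parameters yields a fixed neighborhood $V$ of $\mathbf c_0$, a fixed neighborhood $\widetilde V$ of $0\in\mathbb C^n$, and biholomorphisms $\Phi_r\colon\widetilde V\to V$ depending continuously on $r$ and converging to $\Phi_0$, with $\Phi_r(0)=\mathbf c_r$ and
\[
f^{\mathbf a_r}(\Phi_r(\mathbf w))=f^{\mathbf a_r}(\mathbf c_r)-\tfrac12\sum_{j=1}^n w_j^2 ,
\]
the Jacobian $\det D\Phi_r(0)$ being, for a consistently chosen branch of the square root, equal to $\big(-\det\mathrm{Hess}(f^{\mathbf a_r})(\mathbf c_r)\big)^{-1/2}$. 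Then I would split $S_r=(S_r\cap V)\cup(S_r\setminus V)$.

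On $S_r\setminus V$ I would need $\mathrm{Re}f^{\mathbf a_r}(\mathbf z)\le\mathrm{Re}f^{\mathbf a_r}(\mathbf c_r)-\delta$ for a $\delta>0$ independent of $r$; this follows from hypothesis (2), the continuity of $f$ in $\mathbf a$, and the convergences $\mathbf a_r\to\mathbf a_0$, $\mathbf c_r\to\mathbf c_0$, together with uniform control of the disks $S_r$ near their common boundary. With the uniform bound on $|g^{\mathbf a_r}|$ this makes $\int_{S_r\setminus V}$ of order $e^{r(\mathrm{Re}f^{\mathbf a_r}(\mathbf c_r)-\delta)}$, negligible against $(2\pi/r)^{n/2}e^{rf^{\mathbf a_r}(\mathbf c_r)}$. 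In the coordinate $\mathbf w=\Phi_r^{-1}(\mathbf z)$, $S_r\cap V$ becomes an embedded real $n$-disk $\widetilde S_r$ through the origin on which, by hypothesis (2), $\mathrm{Re}\big(\sum_j w_j^2\big)\ge 0$ with equality only at $0$ (hence $\ge c|\mathbf w|^2$ near $0$ along $\widetilde S_r$), so that
\[
\int_{S_r\cap V}g^{\mathbf a_r}e^{rf^{\mathbf a_r}}\,d\mathbf z=e^{rf^{\mathbf a_r}(\mathbf c_r)}\int_{\widetilde S_r}(g^{\mathbf a_r}\!\circ\Phi_r)\,\det(D\Phi_r)\,e^{-\frac r2\sum_j w_j^2}\,d\mathbf w .
\]
Since $g^{\mathbf a_r}e^{rf^{\mathbf a_r}}\,dz_1\wedge\dots\wedge dz_n$ is a closed holomorphic form, I would deform $\widetilde S_r$ rel boundary to the real plane $\{\mathbf w\in\mathbb R^n\}\cap\widetilde V$ inside the region where $\mathrm{Re}\sum_j w_j^2>0$, changing the integral by an exponentially small amount, and the classical $n$-dimensional Laplace/Gaussian estimate then produces $(2\pi/r)^{n/2}\,g^{\mathbf a_r}(\mathbf c_r)\det D\Phi_r(0)\,(1+O(1/r))$, with hypothesis (4) keeping the amplitude bounded below so that this is the genuine leading term. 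Substituting the value of $\det D\Phi_r(0)$ and recombining with the first step gives the stated formula.

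The hard part is uniformity in $r$: the neighborhood $V$, the Morse charts $\Phi_r$, the descent gap $\delta$, and all the implied constants in the $O(1/r)$ must be chosen independently of $r$. This is exactly why hypotheses (3) and (6) are both imposed --- non-degeneracy of the Hessian at each $\mathbf c_r$ \emph{and} at the limiting point $\mathbf c_0$ forces the relevant data to depend continuously on $r$ along the convergent sequences and hence stay uniformly away from degeneracy; the remaining delicate point is checking that the disks $S_r$ stay uniformly controlled near their common boundary, which is what makes the tail estimate uniform.
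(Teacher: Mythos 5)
The paper does not actually prove Proposition \ref{saddle}: it is quoted as known, with the proof deferred to the appendix of \cite{WY2}. Your outline is the standard multidimensional steepest-descent argument and is, in substance, the argument given there, so there is no conflict of method. Both reduce the $r^{-2}$ perturbation to an $O(1/r)$ relative correction by expanding $e^{\upsilon_r/r}$ and invoking an upper-bound version of the same estimate for a general uniformly bounded holomorphic amplitude (this is precisely the role of hypotheses (4) and (5)); both localize to a fixed neighborhood of $\mathbf c_0$ using the non-degeneracy hypotheses (3) and (6) together with $(\mathbf a_r,\mathbf c_r)\to(\mathbf a_0,\mathbf c_0)$; both discard the tail using hypothesis (2); and both finish with a Gaussian integral after a contour deformation. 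The main presentational difference is that you normalize the phase with a parametric holomorphic Morse lemma and then deform to the real slice in the Morse coordinates, whereas the proof in \cite{WY2} works directly with the Taylor expansion of $f^{\mathbf a_r}$ at $\mathbf c_r$ and a change of variables adapted to the Hessian. Your route is conceptually cleaner --- the Jacobian $\det D\Phi_r(0)$ produces the factor $\big(-\det\mathrm{Hess}(f^{\mathbf a_r})(\mathbf c_r)\big)^{-1/2}$ for free --- at the cost of having to justify the Morse charts uniformly in $r$, which the stated hypotheses do support.

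Two points deserve care in a written version. First, the uniform descent gap $\delta$ on $S_r\setminus V$ does not follow from hypothesis (2) alone, since (2) is a pointwise statement for each $r$ and the disks $S_r$ vary; you correctly flag this. In the intended application (Proposition \ref{critical}) the surfaces $S^{\epsilon_I}$ are explicit and converge, so the compactness argument closes, but as a freestanding proposition one must either add such a hypothesis or regard it as implicit. Second, your parenthetical claim that positivity of $\mathrm{Re}\sum_j w_j^2$ on $\widetilde S_r\setminus\{0\}$ forces a quadratic lower bound $c|\mathbf w|^2$ near the origin is not a valid deduction from positivity alone; fortunately your Stokes-theorem deformation to the real slice makes that quantitative bound unnecessary, so nothing in the argument is lost.
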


A proof can be found in \cite[Appendix]{WY2}.

For fixed $\{\beta_i\}_{i\in I}$ and $\{\alpha_j\}_{j\in J},$ let $\theta_i=|\pi-\beta_i|$ for $i\in I$ and let $\theta_j=|\pi-\alpha_j|$ for $j\in J.$ The function $\mathcal W_r^{\epsilon_I}$ is approximated by the following function
$$\mathcal W^{\epsilon_I}(\alpha_I,\xi_1,\xi_2)=-\sum_{i\in I}2\epsilon_i(\alpha_i-\pi)(\beta_i-\pi)+U(\alpha_1,\dots,\alpha_6,\xi_1))+U(\alpha_1,\dots,\alpha_6,\xi_2).$$
The approximation will be specified in the proof of Proposition \ref{critical}. Notice that $\mathcal W^{\epsilon_I}$ is continuous on 
$$\mathrm{D_{H,\mathbb C}}=\big\{(\alpha_I,\xi_1,\xi_2)\in\mathbb C^{|I|+2}\ \big|\ (\mathrm{Re}(\alpha_I),\mathrm{Re}(\xi_1),\mathrm{Re}(\xi_2))\in \mathrm{D_{H}}\big\}$$ and for any $\delta>0$ is analytic on 
$$\mathrm{D^\delta_{H,\mathbb C}}=\big\{(\alpha_I,\xi_1,\xi_2)\in\mathbb C^{|I|+2}\ \big|\ (\mathrm{Re}(\alpha_I),\mathrm{Re}(\xi_1),\mathrm{Re}(\xi_2))\in \mathrm{D^\delta_{H}}\big\},$$ 
where $\mathrm{Re}(\alpha_I)=(\mathrm{Re}(\alpha_i))_{i\in I}.$

In the rest of this paper, we assume that 	$\theta_1,\dots,\theta_6$ are sufficiently close to $0,$ or equivalently, $\{\beta_i\}_{i\in I}$ and $\{\alpha_j\}_{j\in J}$ are sufficiently close to $\pi.$ In the special case $\beta_i=\alpha_j=\pi$ for all $i\in I$ and $j\in J,$ a direct computation shows that $\xi(\pi,\dots,\pi)=\frac{7\pi}{4}.$ We denote by $\pi_I$ the point $(\pi, \dots, \pi)\in \mathbb C^I.$ For $\delta>0,$ we denote by $\mathrm{D_{\delta,\mathbb C}}$  the $L^1$ $\delta$-neighborhood  of $\big(\pi_I,\frac{7\pi}{4},\frac{7\pi}{4}\big)$ in $\mathbb C^{|I|+2},$  that is 
$$\mathrm{D_{\delta,\mathbb C}}=\Big\{(\alpha_I,\xi_1,\xi_2)\in \mathbb C^{|I|+2}\ \Big|\ d_{L^1}\Big((\alpha_I,\xi_1,\xi_2),\Big(\pi_I,\frac{7\pi}{4},\frac{7\pi}{4}\Big)\Big)<\delta\Big\},$$
where $d_{L^1}$ is the real $L^1$ norm on $\mathbb C^n$ defined by
$$d_{L^1}(\mathbf x,\mathbf y)=\max_{i\in\{1,\dots,n\}}\{|\mathrm {Re}(x_i)-\mathrm{Re}(y_i)|, |\mathrm {Im}(x_i)-\mathrm{Im}(y_i)| \},$$
where $\mathbf x=(x_1,\dots,x_n)$ and $\mathbf y=(y_1,\dots,y_n).$  We will also consider the region 
$$\mathrm{D_{\delta}}=\mathrm{D_{\delta,\mathbb C}}\cap \mathbb R^{|I|+2}.$$

\subsection{Critical points and critical values of \texorpdfstring{$\mathcal W^{\epsilon_I}$}{W}}

Let $\Delta(\theta_I; \theta_J)$ be the deeply truncated tetrahedron of type $(I,J)$ with edges of deep truncation $\{e_i\}_{i\in I}$ and regular edges $\{e_j\}_{j\in J},$  and with $\theta_i$ the dihedral angle at $e_i$ for $i\in I,$  and  with $\theta_i$  the dihedral angle at $e_j$ for $j\in J.$ 
For $i\in I,$ let  $l_i$ be the length of $e_i.$

\begin{proposition}\label{crit} Suppose $\{\beta_i\}_{i\in I}$ and $\{\alpha_j\}_{j\in J}$ are sufficiently close to $\pi.$ Let $\theta_i=|\pi-\beta_i|$ for $i\in I$ and let $\theta_j=|\pi-\alpha_j|$ for $j\in J.$  Let $\mu_i=1$ if $\beta_i\geqslant\pi$ and let $\mu_i=-1$ if $\beta_i\leqslant \pi$ for $i\in I.$ Then $\mathcal W^{\epsilon_I}(\alpha_I,\xi_1,\xi_2)$ has a critical point 
$$z^{\epsilon_I}=\Big(\big(\pi+\epsilon_i\mu_i\sqrt{-1}l_i\big)_{i\in I}, \xi\big((\pi+\epsilon_i\mu_i\sqrt{-1}l_i)_{i\in I},\alpha_J\big), \xi\big((\pi+\epsilon_i\mu_i\sqrt{-1}l_i)_{i\in I},\alpha_J\big)\Big)$$
in $\mathrm{D_{\delta,\mathbb C}}$ with critical value $$4\pi^2+4\sqrt{-1}\cdot \mathrm{Vol}\big(\Delta(\theta_I;\theta_J)\big).$$
\end{proposition}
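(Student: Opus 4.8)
The plan is to exploit the fact that, apart from the bilinear term, $\mathcal{W}^{\epsilon_I}$ is the sum of two copies of $U$ in the independent variables $\xi_1$ and $\xi_2$, and to reduce everything to Theorem \ref{co-vol}. Write $\alpha=(\alpha_1,\dots,\alpha_6)$, the $\alpha_j$ for $j\in J$ being frozen at their prescribed values with $\theta_j=|\pi-\alpha_j|$. Since $\frac{\partial\mathcal{W}^{\epsilon_I}}{\partial\xi_s}=\frac{\partial U}{\partial\xi}(\alpha,\xi_s)$, the analysis preceding (\ref{W}) (Costantino's result, analytically continued from the real hyperideal locus, together with the computation $\xi(\pi_I,\dots,\pi)=\tfrac{7\pi}{4}$) shows that inside $\mathrm{D_{\delta,\mathbb C}}$ the only zero of $\frac{\partial U}{\partial\xi}$ is $\xi_s=\xi(\alpha)$, the branch chosen in (\ref{za}); the competing root $\xi^*(\alpha)$, equal to $\tfrac{5\pi}{4}$ at $\alpha=\pi_I$, stays outside $\mathrm{D_{\delta,\mathbb C}}$ once $\delta$ is small. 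Hence any critical point of $\mathcal{W}^{\epsilon_I}$ in $\mathrm{D_{\delta,\mathbb C}}$ must have $\xi_1=\xi_2=\xi(\alpha)$, and by the envelope argument (the chain rule, exactly as in (\ref{former})) such a point is critical precisely when its $\alpha_I$-component is a critical point of the reduced holomorphic function $\widetilde{\mathcal{W}}^{\epsilon_I}(\alpha_I):=\mathcal{W}^{\epsilon_I}(\alpha_I,\xi(\alpha),\xi(\alpha))=-\sum_{i\in I}2\epsilon_i(\alpha_i-\pi)(\beta_i-\pi)+2W(\alpha)$, where $W(\alpha)=U(\alpha,\xi(\alpha))$.

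Then I would plug in the geometric data. For $\theta_1,\dots,\theta_6$ sufficiently small the deeply truncated tetrahedron $\Delta(\theta_I;\theta_J)$ exists with small lengths $l_i$ of its edges of deep truncation, which one gets by a perturbative application of Proposition \ref{prop:criterion} around the degenerate configuration where every $c_i=c_j=1$ and the Gram matrix equals $2\,\mathrm{Id}-J$ ($J$ the all-ones matrix), of signature $(3,1)$ with positive off-diagonal and negative diagonal cofactors, combined with (\ref{cos1}) (if $l_i=0$ then $\theta_i=0$). Put $\alpha_i=\pi+\epsilon_i\mu_i\sqrt{-1}\,l_i$ for $i\in I$; then $\mathrm{Re}(\alpha_i)=\pi$ and $|\mathrm{Im}(\alpha_i)|=l_i$ is small, so $\xi(\alpha)$ is close to $\tfrac{7\pi}{4}$, whence $z^{\epsilon_I}\in\mathrm{D_{\delta,\mathbb C}}$ and $(\alpha,\xi(\alpha))\in\mathrm{B_{H,\mathbb C}}$; in particular $W$, and hence $\mathcal{W}^{\epsilon_I}$, is holomorphic at $z^{\epsilon_I}$ and (\ref{Sch2}) applies. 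Using $\beta_i-\pi=\mu_i\theta_i$, $\frac{d\alpha_i}{dl_i}=\epsilon_i\mu_i\sqrt{-1}$ and $\frac{\partial W}{\partial l_i}=\sqrt{-1}\,\theta_i$ from (\ref{Sch2}) (here $\theta_i$ is the $i$-th deep dihedral angle of the tetrahedron determined by $l_I$, which at these lengths is the target $\theta_i$), one gets $\frac{dW}{d\alpha_i}=\epsilon_i\mu_i\theta_i$ and therefore $\frac{d\widetilde{\mathcal{W}}^{\epsilon_I}}{d\alpha_i}=-2\epsilon_i(\beta_i-\pi)+2\frac{dW}{d\alpha_i}=-2\epsilon_i\mu_i\theta_i+2\epsilon_i\mu_i\theta_i=0$ for every $i\in I$. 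Thus $z^{\epsilon_I}$ is the asserted critical point.

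The critical value is then just a substitution: with $\alpha_i-\pi=\epsilon_i\mu_i\sqrt{-1}\,l_i$ and $\beta_i-\pi=\mu_i\theta_i$ the bilinear term equals $-2\sqrt{-1}\sum_{i\in I}l_i\theta_i$, while Theorem \ref{co-vol} gives $W(\alpha)=2\pi^2+2\sqrt{-1}\,\mathrm{Cov}(l_I)=2\pi^2+2\sqrt{-1}\big(\mathrm{Vol}(\Delta(\theta_I;\theta_J))+\tfrac12\sum_{i\in I}\theta_i l_i\big)$, the sign ambiguities in the arguments of $W$ being irrelevant by that theorem; adding $2W(\alpha)$, the $\sum l_i\theta_i$ contributions cancel and $\mathcal{W}^{\epsilon_I}(z^{\epsilon_I})=4\pi^2+4\sqrt{-1}\,\mathrm{Vol}(\Delta(\theta_I;\theta_J))$, as claimed. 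The hard part will be not this computation but making its hypotheses legitimate: verifying that for small angles $\Delta(\theta_I;\theta_J)$ indeed exists with small $l_i$, and — the point flagged in the remark after (\ref{Costantino}) — that the resulting $z^{\epsilon_I}$ lies in $\mathrm{D_{\delta,\mathbb C}}$ with $(\alpha,\xi(\alpha))\in\mathrm{B_{H,\mathbb C}}$, so that $\mathrm{Re}(\xi(\alpha))$ stays between $\max\mathrm{Re}(\tau_i)$ and $\min\{\mathrm{Re}(\eta_j),2\pi\}$ and every dilogarithm argument avoids the cut $[1,\infty)$; both are obtained by continuity from the configuration $\alpha=\pi_I$, $\xi=\tfrac{7\pi}{4}$, together with uniqueness of the nearby $\xi$-critical point.
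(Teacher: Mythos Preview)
Your argument is correct and follows essentially the same route as the paper: reduce the $\xi_s$-derivatives to $\frac{\partial U}{\partial\xi}$ vanishing at $\xi(\alpha)$, use the chain rule to replace $\frac{\partial U}{\partial\alpha_i}$ at $(\alpha,\xi(\alpha))$ by $\frac{dW}{d\alpha_i}$, invoke Theorem \ref{co-vol}/(\ref{Sch2}) to get $\frac{dW}{d\alpha_i}=\epsilon_i\mu_i\theta_i$, and then cancel against the bilinear term using $\beta_i-\pi=\mu_i\theta_i$; the critical value computation and the continuity argument placing $z^{\epsilon_I}$ in $\mathrm{D_{\delta,\mathbb C}}$ are likewise identical. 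Your extra care about the existence of $\Delta(\theta_I;\theta_J)$ via Proposition \ref{prop:criterion} and the explicit exclusion of the competing root $\xi^*(\alpha)$ are welcome elaborations, but not departures from the paper's approach.
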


\begin{proof}  
By (\ref{cos1}), for $i\in I,$ $l_i$ is sufficiently small for a sufficiently small $\theta_i.$  Then by the continuity of $\xi(\alpha_1,\dots,\alpha_6),$ the point $((\pi\pm il_i)_{i\in I},\xi((\pi\pm il_i)_{i\in I},\alpha_J),\xi((\pi\pm il_i)_{i\in I},\alpha_J))\in \mathrm D_{\delta,\mathbb C}$ for sufficiently small $\theta_1,\dots,\theta_6.$

For any $\alpha_I=(\alpha_i)_{i\in I}$ so that $(\alpha_I,\xi(\alpha_I,\alpha_J),\xi(\alpha_I,\alpha_J))\in\mathrm{D_{H,\mathrm C}},$ and for $s=1,2,$
\begin{equation}\label{partialxi}
\frac{\partial \mathcal W^{\epsilon_I}}{\partial \xi_s}\Big|_{(\alpha_I,\xi(\alpha_I,\alpha_J),\xi(\alpha_I,\alpha_J))}=\frac{\partial U}{\partial \xi}\Big|_{((\alpha_I,\alpha_J),\xi(\alpha_I,\alpha_J))}=\frac{\partial U_{(\alpha_I, \alpha_J)}}{\partial \xi}\Big|_{\xi(\alpha_I, \alpha_J)}=0.
\end{equation}
In particular, 
$$\frac{\partial \mathcal W^{\epsilon_I}}{\partial \xi_1}\Big|_{z^{\epsilon_I}}=\frac{\partial \mathcal W^{\epsilon_I}}{\partial \xi_2}\Big|_{z^{\epsilon_I}}=0.$$

Let $\alpha=(\alpha_I,\alpha_J)$ and let $W(\alpha)=U(\alpha,\xi(\alpha))$ be the function defined in (\ref{W}). Then for $i\in I$
$$\frac{\partial  W}{\partial \alpha_i}\Big|_\alpha=\frac{\partial U}{\partial \alpha_i}\Big|_{(\alpha,\xi(\alpha))}+\frac{\partial U}{\partial \xi}\Big|_{(\alpha,\xi(\alpha))}\cdot\frac{\partial \xi(\alpha)}{\partial \alpha_i}\Big|_\alpha=\frac{\partial U}{\partial \alpha_i}\Big|_{(\alpha,\xi(\alpha))}.$$
Together with Theorem \ref{co-vol} and Lemma \ref{Sch}, we have 
\begin{equation*}
\begin{split}
\frac{\partial U}{\partial \alpha_i}\Big|_{\big(\big((\pi+\epsilon_i\mu_i\sqrt{-1}l_i)_{i\in I},\alpha_J\big),\xi\big((\pi+\epsilon_i\mu_i \sqrt{-1}l_i)_{i\in I},\alpha_J\big)\big)}=&\frac{\partial  W}{\partial \alpha_i}\Big|_{\big((\pi+\epsilon_i\mu_i \sqrt{-1}l_i)_{i\in I},\alpha_J\big)}\\
=&-\epsilon_i\mu_i \sqrt{-1}\cdot\frac{\partial W}{\partial l_i}\Big|_{(l_i)_{i\in I}}=\epsilon_i\mu_i \theta_i.
\end{split}
\end{equation*}
Then we have for $i\in I,$ 
\begin{equation*}
\begin{split}
\frac{\partial \mathcal W^{\epsilon_I}}{\partial \alpha_i}\Big|_{z^{\epsilon_I}}=&-2\epsilon_i(\beta_i-\pi)+2\frac{\partial U}{\partial \alpha_i}\Big|_{\big(\big((\pi+\epsilon_i\mu_i\sqrt{-1}l_i)_{i\in I},\alpha_J\big),\xi\big((\pi+\epsilon_i\mu_i \sqrt{-1}l_i)_{i\in I},\alpha_J\big)\big)}\\
=&2\epsilon_i\big(-(\beta_i-\pi)+\mu_i\theta_i\big)=0,
\end{split}
\end{equation*}
where the last equality comes from that $\mu_i\theta_i=\beta_i-\pi.$

For the critical value, by Theorem \ref{co-vol} and $\mu_i\theta_i=\beta_i-\pi$ again, we have 
\begin{equation*}
\begin{split}
\mathcal W^{\epsilon_I}(z^{\epsilon_I})=&-\sum_{i\in I}2\epsilon_i(\sqrt{-1}\epsilon_i\mu_il_i)(\beta_i-\pi)+2\Big(2\pi^2+2\sqrt{-1}\Big(\mathrm{Vol}(\Delta(\theta_I;\theta_J))+\frac{1}{2}\sum_{i\in I}\theta_il_i\Big)\Big)\\
=&4\pi^2+4\sqrt{-1}\cdot \mathrm{Vol}((\Delta(\theta_I;\theta_J))+\sum_{i\in I}2\sqrt{-1}\big(-\mu_i(\beta_i-\pi)+\theta_i\big)l_i\\
=&4\pi^2+4\sqrt{-1}\cdot \mathrm{Vol}((\Delta(\theta_I;\theta_J)).
\end{split}
\end{equation*}
\end{proof}


\subsection{Convexity of \texorpdfstring{$\mathcal W^{\epsilon_I}$}{W}}

\begin{proposition}\label{convexity} 
There exists a $\delta_0>0$ such that if all $\{\alpha_j\}_{j\in J}$ are in $(\pi-\delta_0,\pi+\delta_0),$ then for any $\epsilon_I,$ $\mathrm{Im}\mathcal W^{\epsilon_I}(\alpha_I,\xi_1,\xi_2)$ is strictly concave down in $\{\mathrm{Re}(\alpha_i)\}_{i\in I},$ $\mathrm{Re}(\xi_1)$ and $\mathrm{Re}(\xi_2),$ and is strictly concave up in $\{\mathrm{Im}(\alpha_i)\}_{i\in I},$ $\mathrm{Im}(\xi_1)$ and $\mathrm{Im}(\xi_2)$ on $\mathrm{D_{\delta_0,\mathbb C}}.$
\end{proposition}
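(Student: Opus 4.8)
The plan is to reduce the convexity statement to one matrix being negative definite at the symmetric point $\big(\pi_I,\tfrac{7\pi}{4},\tfrac{7\pi}{4}\big)$, and then to verify that condition by a direct Hessian computation, using the tetrahedral symmetry to keep it manageable.

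\textbf{Reduction.} First I would write $\mathcal W^{\epsilon_I}=\mathcal L^{\epsilon_I}+U(\alpha,\xi_1)+U(\alpha,\xi_2)$, where $\mathcal L^{\epsilon_I}(\alpha_I,\xi_1,\xi_2)=-\sum_{i\in I}2\epsilon_i(\alpha_i-\pi)(\beta_i-\pi)$ is affine in $(\alpha_I,\xi_1,\xi_2)$ (the $\beta_i$ being fixed parameters). Hence the complex Hessian $\mathcal H$ of $\mathcal W^{\epsilon_I}$ in the variables $(\alpha_I,\xi_1,\xi_2)$ does not depend on $\epsilon_I$ and equals that of $U(\alpha,\xi_1)+U(\alpha,\xi_2)$. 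For any holomorphic function $F$ of variables $w_k=x_k+\sqrt{-1}y_k$ one has $\partial^2(\mathrm{Im}F)/\partial x_j\partial x_k=\mathrm{Im}\big(\partial^2 F/\partial w_j\partial w_k\big)=-\partial^2(\mathrm{Im}F)/\partial y_j\partial y_k$, so $\mathrm{Im}\,\mathcal W^{\epsilon_I}$ is simultaneously strictly concave down in the real parts and strictly concave up in the imaginary parts precisely when $\mathrm{Im}\,\mathcal H$ is negative definite. Since $\mathrm{Im}\,\mathcal H$ depends continuously on the evaluation point and on $(\alpha_j)_{j\in J}$, and negative definiteness is an open condition, it suffices to show $\mathrm{Im}\,\mathcal H$ is negative definite at the point where all $\alpha_i=\pi$ ($i\in I$), all $\alpha_j=\pi$ ($j\in J$) and $\xi_1=\xi_2=\tfrac{7\pi}{4}$; this point is interior to $\mathrm{D_H}$, so $\mathcal W^{\epsilon_I}$ is analytic near it, and the desired $\delta_0$ then exists.

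\textbf{Hessian at the symmetric point.} There $(\alpha,\xi)=(\pi,\dots,\pi,\tfrac{7\pi}{4})$ lies in $\mathrm{B_H}$, so $U=2\pi^2+2\sqrt{-1}V$ along $\mathrm{B_H}$ by (\ref{UV}), and therefore $\mathrm{Im}\,\mathcal H=2\,\mathrm{Hess}_{\mathbb R}\big(V(\alpha,\xi_1)+V(\alpha,\xi_2)\big)$ at this point. Differentiating the cotangent formulas for $\partial V/\partial\xi$ and $\partial V/\partial\alpha_i$ in (\ref{pU})--(\ref{pUa}), and using that at the symmetric point $\tau_i-\pi=\eta_j-\tau_i=\tfrac{\pi}{2}$ and $\xi-\tau_i=\eta_j-\xi=\tfrac{\pi}{4}$ while $\xi=\tfrac{7\pi}{4}$, one computes
\[
\frac{\partial^2 V}{\partial\xi^2}=-8,\qquad \frac{\partial^2 V}{\partial\alpha_i\,\partial\xi}=2\ \ (i\in I),\qquad \frac{\partial^2 V}{\partial\alpha_i\,\partial\alpha_j}=\begin{cases}-1,& i=j\\[2pt] -\tfrac12,& i\neq j,\end{cases}
\]
the last value being the same for adjacent and for opposite edges, by the $S_4$-symmetry of the tetrahedral graph. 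Thus, in block form $(\alpha_I,\xi_1,\xi_2)$,
\begin{equation*}
\mathrm{Hess}_{\mathbb R}\big(V(\alpha,\xi_1)+V(\alpha,\xi_2)\big)=\begin{pmatrix}-(I_{|I|}+J_{|I|}) & \mathbf 2 & \mathbf 2\\[2pt] \mathbf 2^{\mathrm T} & -8 & 0\\[2pt] \mathbf 2^{\mathrm T} & 0 & -8\end{pmatrix},
\end{equation*}
where $J_{|I|}$ is the all-ones matrix and $\mathbf 2$ is the all-twos column.

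\textbf{Linear algebra and the obstacle.} The block $\mathrm{diag}(-8,-8)$ is negative definite, and the Schur complement of the displayed matrix with respect to it equals $-(I_{|I|}+J_{|I|})-(\mathbf 2\ \mathbf 2)\,\mathrm{diag}\big(-\tfrac18,-\tfrac18\big)\,(\mathbf 2\ \mathbf 2)^{\mathrm T}=-(I_{|I|}+J_{|I|})+J_{|I|}=-I_{|I|}$, which is negative definite; hence so is the whole matrix, and so is $\mathrm{Im}\,\mathcal H$ at the symmetric point, which by the reduction completes the proof. Everything except the Hessian evaluation is formal; the one place real care is needed is that computation, where the main point is that the tetrahedral symmetry forces exactly two distinct off-diagonal values $\partial^2 V/\partial\alpha_i\partial\alpha_j$ and that these two values actually coincide at the symmetric point — this is what makes the Schur complement collapse to $-I_{|I|}$ and hence makes the argument work uniformly in the partition $(I,J)$.
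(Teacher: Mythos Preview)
Your proof is correct and follows essentially the same approach as the paper: compute the real Hessian of $\mathrm{Im}\,\mathcal W^{\epsilon_I}$ at the symmetric point $\big(\pi_I,\tfrac{7\pi}{4},\tfrac{7\pi}{4}\big)$, verify negative definiteness, and then invoke continuity and holomorphicity. Your numerical Hessian entries agree with the paper's (the paper records the entries of $\mathrm{Hess}(\mathrm{Im}\,\mathcal W^{\epsilon_I})=2\,\mathrm{Hess}(V(\alpha,\xi_1)+V(\alpha,\xi_2))$, i.e.\ twice yours), and where the paper simply asserts that ``a direct computation shows'' the Hessian is negative definite, you supply the Schur-complement argument explicitly; your observation that the $\epsilon_I$-dependent part is affine and hence drops out of the Hessian is also a clean addition the paper leaves implicit.
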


\begin{proof} We first consider the special case $\{\alpha_i\}_{i\in I},$ $\xi_1$ and $\xi_2$ are real.  In this case, 
$$\mathrm{Im}\mathcal W^{\epsilon_I}(\alpha_I,\xi_1,\xi_2)=2V(\alpha_1,\dots,\alpha_6,\xi_1)+2V(\alpha_1,\dots,\alpha_6,\xi_2)$$
with $V(\alpha_1,\dots,\alpha_6,\xi)$ defined in (\ref{V}).

At $\big(\pi,\dots,\pi,\frac{7\pi}{4},\frac{7\pi}{4}\big),$ we have $\frac{\partial ^2\mathrm{Im}\mathcal W^{\epsilon_I}}{\partial \alpha_i^2} =-4$ for $i\in I,$ $\frac{\partial ^2\mathrm{Im}\mathcal W^{\epsilon_I}}{\partial \alpha_i\alpha_{i'}} =-2$ for $i\neq i'\in I,$ $\frac{\partial ^2\mathrm{Im}\mathcal W^{\epsilon_I}}{\partial \alpha_i\xi_s} =4$ for $i\in I$ and $s=1,2,$ $\frac{\partial ^2\mathrm{Im}\mathcal W^{\epsilon_I}}{\partial \xi_s^2} =-16$ for $s=1,2$ and $\frac{\partial ^2\mathrm{Im}\mathcal W^{\epsilon_I}}{\partial \xi_1\xi_2} =0.$ Then a direct computation shows that,  at $\big(\pi,\dots,\pi,\frac{7\pi}{4},\frac{7\pi}{4}\big),$ the Hessian matrix 
of $\mathrm{Im}\mathcal W^{\epsilon_I}$ is negative definite.

Then by the continuity, there exists a sufficiently small $\delta_0>0$ such that for all $\{\alpha_j\}_{j\in J}$ in $(\pi-\delta_0,\pi+\delta_0)$ and $(\alpha_I,\xi_1,\xi_2)\in \mathrm{D_{\delta_0,\mathbb C}},$ the Hessian matrix of $\mathrm{Im}\mathcal W^{\epsilon_I}$ with respect to $\{\mathrm{Re}(\alpha_i)\}_{i\in I},$ $\mathrm{Re}(\xi_1)$ and $\mathrm{Re}(\xi_2)$ is still negative definite, implying that $\mathrm{Im}\mathcal W^{\epsilon_I}$ is strictly concave down in $\{\mathrm{Re}(\alpha_i)\}_{i\in I},$ $\mathrm{Re}(\xi_1)$ and $\mathrm{Re}(\xi_2)$ on $\mathrm{D_{\delta_0,\mathbb C}}.$ Since $\mathcal W^{\epsilon_I}$ is holomorphic, $\mathrm{Im}\mathcal W^{\epsilon_I}$ is strictly concave up in $\{\mathrm{Im}(\alpha_i)\}_{i\in I},$ $\mathrm{Im}(\xi_1)$ and $\mathrm{Im}(\xi_2)$ on $\mathrm{D_{\delta_0,\mathbb C}}.$
\end{proof}

\begin{proposition}\label{nonsingular} If all $\{\alpha_j\}_{j\in J}$ are in $(\pi-\delta_0,\pi+\delta_0),$ then
the Hessian matrix $\mathrm{Hess}\mathcal W^{\epsilon_I}$ of $\mathcal W^{\epsilon_I}$ with respect to $\{\alpha_i\}_{i\in I},$ $\xi_1$ and $\xi_2$ is non-singular on $\mathrm{D_{\delta_0,\mathbb C}}.$
\end{proposition}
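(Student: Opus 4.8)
The plan is to obtain non-singularity of the complex Hessian as an immediate corollary of the definiteness already proved in Proposition \ref{convexity}, via a short linear-algebra observation: a complex symmetric matrix whose imaginary part is definite is invertible.

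First I would record the elementary relation between the complex Hessian and the real Hessian of the imaginary part. Since $\mathcal W^{\epsilon_I}$ is holomorphic in the $|I|+2$ variables $(\alpha_I,\xi_1,\xi_2),$ writing these variables collectively as $z_k=x_k+\sqrt{-1}y_k,$ one has $\partial/\partial x_k=\partial/\partial z_k,$ hence $\partial^2\mathcal W^{\epsilon_I}/\partial x_k\partial x_l=\partial^2\mathcal W^{\epsilon_I}/\partial z_k\partial z_l.$ Taking imaginary parts, the real Hessian of $\mathrm{Im}\,\mathcal W^{\epsilon_I}$ with respect to $\{\mathrm{Re}(\alpha_i)\}_{i\in I},\mathrm{Re}(\xi_1),\mathrm{Re}(\xi_2)$ equals $\mathrm{Im}\big(\mathrm{Hess}\,\mathcal W^{\epsilon_I}\big).$ By Proposition \ref{convexity}, for all $\{\alpha_j\}_{j\in J}$ in $(\pi-\delta_0,\pi+\delta_0)$ this real Hessian is negative definite on $\mathrm{D_{\delta_0,\mathbb C}};$ therefore $B:=\mathrm{Im}\big(\mathrm{Hess}\,\mathcal W^{\epsilon_I}\big)$ is a negative definite real symmetric matrix there.

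Next I would invoke the linear-algebra fact: if $H=A+\sqrt{-1}B$ with $A,B$ real symmetric and $B$ definite, then $H$ is invertible. Indeed, if $H(u+\sqrt{-1}v)=0$ with $u,v$ real vectors, separating real and imaginary parts gives $Au=Bv$ and $Bu=-Av;$ pairing the first identity with $v,$ the second with $u,$ and using $A^T=A$ yields $v^T B v=v^T A u=u^T A v=-u^T B u,$ so $u^T B u+v^T B v=0,$ which with $B$ definite forces $u=v=0.$ Applying this with $H=\mathrm{Hess}\,\mathcal W^{\epsilon_I}$ and $B$ as above shows $\mathrm{Hess}\,\mathcal W^{\epsilon_I}$ is non-singular on $\mathrm{D_{\delta_0,\mathbb C}},$ which is the claim.

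In this sense there is no genuine obstacle: the entire content sits in Proposition \ref{convexity}, and the present proposition is merely its translation from ``$\mathrm{Im}\,\mathcal W^{\epsilon_I}$ concave in the real directions'' to ``complex Hessian invertible.'' The one point requiring a line of care is the identification in the first step, namely that the Hessian appearing in Proposition \ref{convexity} is exactly $\mathrm{Im}$ of the complex Hessian, which holds because $\mathcal W^{\epsilon_I}$ is holomorphic.
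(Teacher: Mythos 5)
Your proof is correct and follows essentially the same route as the paper: deduce non-singularity from the definiteness established in Proposition \ref{convexity}, using the fact that a complex symmetric matrix whose (real or) imaginary part is definite is invertible. The only difference is that the paper cites this linear-algebra fact from London's note \cite{L}, whereas you prove it directly (and you are also more careful in noting that, for the holomorphic function $\mathcal W^{\epsilon_I}$, the negative definite matrix from Proposition \ref{convexity} is $\mathrm{Im}\big(\mathrm{Hess}\,\mathcal W^{\epsilon_I}\big)$, i.e.\ the real part of $\mathrm{Hess}\big(\mathcal W^{\epsilon_I}/(4\pi\sqrt{-1})\big)$).
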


\begin{proof} By Proposition \ref{convexity}, the real part of the $\mathrm{Hess}\mathcal W^{\epsilon_I}$ is negative definite. Then by \cite[Lemma]{L}, it is nonsingular.
\end{proof}


\subsection{Asymptotics of the leading Fourier coefficients}\label{leading}

\begin{proposition}\label{critical}
Suppose $\{\beta_i\}_{i\in I}$ and $\{\alpha_j\}_{j\in J}$ are in $\{\pi-\epsilon,\pi+\epsilon
\}$ for a sufficiently small $\epsilon>0.$ For $\epsilon_I\in\{1,-1\}^I,$ let  $z^{\epsilon_I}$ be the critical point of $\mathcal W^{\epsilon_I}$ described in Proposition \ref{crit}. Then
$$\widehat{f^{\epsilon_I}_r}(0,\dots,0)=\frac{C^{\epsilon_I}(z^{\epsilon_I})}{\sqrt{-\det\mathrm{Hess}\Big(\frac{\mathcal W^{\epsilon_I}(z^{\epsilon_I})}{4\pi\sqrt{-1}}\Big)}}e^{\frac{r}{2\pi}2\mathrm{Vol}(\Delta(\theta_I;\theta_J))}\Big( 1 + O \Big( \frac{1}{r} \Big) \Big)$$
where each $C^{\epsilon_I}(z^{\epsilon_I})$ depends continuously on $\{\beta_i
\}_{i\in I}$ and $\{\alpha_j\}_{j\in J};$ and when $\beta_i=\alpha_j=\pi,$
$$C^{\epsilon_I}(z^{\epsilon_I})=\frac{r^{\frac{|I|}{2}-1}}{2^{\frac{3|I|}{2}+1}\pi^{\frac{|I|}{2}+1}}.$$
\end{proposition}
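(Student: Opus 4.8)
The plan is to evaluate the integral representation of $\widehat{f^{\epsilon_I}_r}(0,\dots,0)$ in Proposition \ref{4.2} by the Saddle Point Method of Proposition \ref{saddle}, applied in dimension $n=|I|+2$ with variables $\mathbf z=(\alpha_I,\xi_1,\xi_2)$, parameters $\mathbf a=\big((\beta_i)_{i\in I},(\alpha_j)_{j\in J},\tfrac1r\big)$ with limit $\mathbf a_0=(\pi_I,\pi_J,0)$, and with the critical point supplied by Proposition \ref{crit}. The first step is to put $\mathcal W_r^{\epsilon_I}$ into the shape required by Proposition \ref{saddle}. Using Lemma \ref{converge} — expanding $\varphi_r(\tfrac\pi r)=\mathrm{Li}_2(1)+\tfrac{2\pi\sqrt{-1}}r\log\tfrac r2-\tfrac{\pi^2}r+O(\tfrac1{r^2})$ and, for the shifted arguments, $\varphi_r(z+\tfrac cr)=\mathrm{Li}_2(e^{2\sqrt{-1}z})-\tfrac{2c\sqrt{-1}}r\log(1-e^{2\sqrt{-1}z})+O(\tfrac1{r^2})$ (via $\varphi_r'(z)=-2\sqrt{-1}\log(1-e^{2\sqrt{-1}z})+O(\tfrac1{r^2})$) — one writes
$$\tfrac r{4\pi\sqrt{-1}}\mathcal W_r^{\epsilon_I}(\mathbf z)=r\,f^{\mathbf a_r}(\mathbf z)+r\,\kappa_r+\tfrac{\upsilon_r(\mathbf z)}r ,$$
where $f^{\mathbf a_r}$ is $\tfrac1{4\pi\sqrt{-1}}\mathcal W^{\epsilon_I}$ plus the explicit $O(\tfrac1r)$-terms produced by the $\tfrac cr$-shifts of the $\varphi_r$-arguments and the $\tfrac{2\pi}r$-shifts of the quadratic terms in $U_r$ (so $f^{\mathbf a_0}=\tfrac1{4\pi\sqrt{-1}}\mathcal W^{\epsilon_I}$, holomorphic in $\mathbf z$), $\kappa_r$ is a $\mathbf z$-independent scalar for which $e^{r\kappa_r}$ is a fixed negative power of $r$ times a root of unity (two copies of $U_r$, each contributing the logarithmic piece of $\varphi_r(\tfrac\pi r)$), and $\upsilon_r$ is holomorphic and uniformly bounded on $\mathrm{D_{H,\mathbb C}}$ — this last being hypothesis (5). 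Setting $g^{\mathbf a_r}(\mathbf z)=\psi(\mathbf z)\,e^{\sum_{i\in I}\epsilon_i\sqrt{-1}(\alpha_i+\beta_i+2\pi/r)}$ and $f_r^{\mathbf a_r}=f^{\mathbf a_r}+\tfrac{\upsilon_r}{r^2}$, the identity of Proposition \ref{4.2} becomes $\widehat{f^{\epsilon_I}_r}(0,\dots,0)=(\text{explicit prefactor})\int g^{\mathbf a_r}e^{rf_r^{\mathbf a_r}}$, where the prefactor absorbs $\tfrac{r^{|I|+2}}{2^{2|I|+2}\pi^{|I|+2}}$ and $e^{r\kappa_r}$.

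The step I expect to be the main obstacle is the choice of the contour $S_r$ and the verification of hypothesis (2). I would take $S_r$ to be an embedded $n$-disk obtained from (the relevant part of) the real region $\mathrm{D_H}$ by deforming it so that near its centre the $\alpha_i$-coordinate acquires imaginary part $\epsilon_i\mu_i l_i$ and $\xi_1,\xi_2$ acquire the imaginary parts dictated by $\xi\big((\pi+\epsilon_i\mu_i\sqrt{-1}l_i)_{i\in I},\alpha_J\big)$ — so that the critical point $z^{\epsilon_I}$ of Proposition \ref{crit} lies on $S_r$ — and tapers back to the real locus near $\partial\mathrm{D_H}$, where $\psi\equiv0$; since the integrand is holomorphic on $\mathrm{D^\delta_{H,\mathbb C}}$ and $\partial S_r=\partial\mathrm{D_H}$, Cauchy's theorem replaces $\mathrm{D_H}$ by $S_r$. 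Now $\mathrm{Re}\,f^{\mathbf a_r}=\tfrac1{4\pi}\mathrm{Im}\,\mathcal W^{\epsilon_I}+O(\tfrac1r)$, and Proposition \ref{convexity} says $\mathrm{Im}\,\mathcal W^{\epsilon_I}$ is strictly concave down in the real directions and strictly concave up in the imaginary directions on $\mathrm{D_{\delta_0,\mathbb C}}$ — precisely a nondegenerate steepest-descent configuration at $z^{\epsilon_I}$, with $S_r$ chosen tangent to the real subspace there, so that $\mathrm{Re}\,f^{\mathbf a_r}$ has a strict local maximum at $\mathbf c_r$. Upgrading this to a strict global maximum on all of $S_r$, and in particular controlling the part of $S_r$ lying outside $\mathrm{D_{\delta_0,\mathbb C}}$ (where $\psi$ is neither identically $1$ nor holomorphic), is where the smallness of $\theta_1,\dots,\theta_6$ is used: it keeps $z^{\epsilon_I}$ close to $\big(\pi_I,\tfrac{7\pi}4,\tfrac{7\pi}4\big)$, comfortably interior to $\mathrm{D_{\delta_0,\mathbb C}}$, and makes the comparison of $\mathrm{Re}\,f^{\mathbf a_r}(\mathbf c_r)$ with the boundary values feasible. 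Concretely I would split $S_r$ into a small ball about $\mathbf c_r$ where $\psi\equiv1$ — on which Proposition \ref{saddle} applies as stated — and its complement, which the convexity bound shows contributes $O\big(e^{r(\mathrm{Re}\,f^{\mathbf a_r}(\mathbf c_r)-c)}\big)$ for some $c>0$, hence is exponentially subdominant.

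The remaining hypotheses are routine. For (1) and (6): $\mathbf c_0=z^{\epsilon_I}$ is a critical point of $f^{\mathbf a_0}$ by Proposition \ref{crit}, and $\mathrm{Hess}(f^{\mathbf a_0})(\mathbf c_0)$ is nonsingular by Proposition \ref{nonsingular}; the implicit function theorem then gives, for $r$ large, a critical point $\mathbf c_r\to\mathbf c_0$ of the $O(\tfrac1r)$-perturbed $f^{\mathbf a_r}$ with $\mathrm{Hess}(f^{\mathbf a_r})(\mathbf c_r)$ nonsingular, which is hypothesis (3). For (4): $\psi(\mathbf c_r)=1$ since $\mathbf c_r$ is interior to $\mathrm{D_H^\delta}$ for small angles, and $|g^{\mathbf a_r}(\mathbf c_r)|=e^{-\sum_{i\in I}\mu_i l_i}$ is bounded below by a positive constant. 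Feeding this into Proposition \ref{saddle} yields
$$\widehat{f^{\epsilon_I}_r}(0,\dots,0)=(\text{prefactor})\Big(\tfrac{2\pi}r\Big)^{\frac n2}\frac{g^{\mathbf a_r}(\mathbf c_r)}{\sqrt{-\det\mathrm{Hess}(f^{\mathbf a_r})(\mathbf c_r)}}\,e^{rf^{\mathbf a_r}(\mathbf c_r)}\big(1+O(\tfrac1r)\big).$$
Since $z^{\epsilon_I}$ is a critical point of $\mathcal W^{\epsilon_I}$, one has $\mathcal W^{\epsilon_I}(\mathbf c_r)=\mathcal W^{\epsilon_I}(z^{\epsilon_I})+O(\tfrac1{r^2})=4\pi^2+4\sqrt{-1}\,\mathrm{Vol}(\Delta(\theta_I;\theta_J))+O(\tfrac1{r^2})$ by Proposition \ref{crit}, so $e^{rf^{\mathbf a_r}(\mathbf c_r)}=(-1)^r e^{\frac r{2\pi}2\mathrm{Vol}(\Delta(\theta_I;\theta_J))}\big(1+O(\tfrac1r)\big)$, which for odd $r$ is $-e^{\frac r{2\pi}2\mathrm{Vol}(\Delta(\theta_I;\theta_J))}\big(1+O(\tfrac1r)\big)$ — the asserted exponential rate.

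Finally, one replaces $\mathrm{Hess}(f^{\mathbf a_r})(\mathbf c_r)$ by its limit $\mathrm{Hess}\big(\tfrac1{4\pi\sqrt{-1}}\mathcal W^{\epsilon_I}(z^{\epsilon_I})\big)$ (the discrepancy being swallowed by $1+O(\tfrac1r)$) and collects the surviving $\mathbf z$-independent factors — the prefactor, $\big(\tfrac{2\pi}r\big)^{n/2}$, $g^{\mathbf a_r}(\mathbf c_r)$, and the sign $(-1)^r$ — into $C^{\epsilon_I}(z^{\epsilon_I})$, which is continuous in $\{\beta_i\}_{i\in I}$ and $\{\alpha_j\}_{j\in J}$ by construction. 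At $\beta_i=\alpha_j=\pi$ one has $l_i=0$, $z^{\epsilon_I}=\big(\pi_I,\tfrac{7\pi}4,\tfrac{7\pi}4\big)$, $g^{\mathbf a_r}(\mathbf c_r)\to e^{2\pi\sqrt{-1}\sum_{i\in I}\epsilon_i}=1$, and a direct computation of the remaining powers of $r$, $2$ and $\pi$ — the power of $r$ being $(|I|+2)-\tfrac{|I|+2}2-2=\tfrac{|I|}2-1$ — gives $C^{\epsilon_I}(z^{\epsilon_I})=\dfrac{r^{|I|/2-1}}{2^{3|I|/2+1}\pi^{|I|/2+1}}$.
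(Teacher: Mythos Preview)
Your overall strategy matches the paper's: apply the Saddle Point Method (Proposition \ref{saddle}) to the integral of Proposition \ref{4.2}, with critical point $z^{\epsilon_I}$ from Proposition \ref{crit} and nondegeneracy from Proposition \ref{nonsingular}. Two points deserve comment.

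First, a packaging difference. You absorb the $\mathbf z$-dependent $O(1/r)$ corrections (coming from the $\tfrac{c\pi}{r}$-shifts in the $\varphi_r$-arguments) into $f^{\mathbf a_r}$ by treating $1/r$ as an extra parameter, and then invoke the implicit function theorem to locate a perturbed critical point $\mathbf c_r$. The paper instead keeps $f^{\mathbf a_r}=\tfrac{1}{4\pi\sqrt{-1}}\mathcal W^{\epsilon_I}$ exactly and pushes those $O(1/r)$ corrections into $g^{\mathbf a_r}$ via an explicit function $\kappa(\alpha_I,\xi_1,\xi_2)$, so that $\mathbf c_r=z^{\epsilon_I}$ on the nose. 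Both are legitimate, but the paper's choice avoids the implicit-function step and makes hypothesis (1) immediate.

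Second, and this is the genuine gap: your verification of hypothesis (2) is incomplete on the part of the domain lying \emph{outside} $\mathrm{D_{\delta_0,\mathbb C}}$. You write that ``the convexity bound'' handles the complement of a small ball about $\mathbf c_r$, but Proposition \ref{convexity} only gives concavity on $\mathrm{D_{\delta_0,\mathbb C}}$; it says nothing about $\mathrm{D_H}\setminus\mathrm{D_{\delta_0}}$, which is a large real region where the integrand is not a priori small. The paper closes this gap with Lemma \ref{absm}: on all of $\mathrm{D_H}$ one has $\mathrm{Im}\,\mathcal W^{\epsilon_I}=2V+2V\le 4\mathrm{Vol}(\Delta_{(0,\dots,0)})$, with equality only at $(\pi_I,\tfrac{7\pi}{4},\tfrac{7\pi}{4})$ --- this is the geometric fact that the regular ideal octahedron maximizes volume among hyperideal tetrahedra. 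Compactness of $\mathrm{D_H}\setminus\mathrm{D_{\delta_0}}$ then gives a strict gap, and the smallness of the $\theta_k$ is used only to ensure $\mathrm{Im}\,\mathcal W^{\epsilon_I}(z^{\epsilon_I})=4\mathrm{Vol}(\Delta(\theta_I;\theta_J))$ stays above that gap. Your proposal needs this input. Relatedly, your ``tapering'' contour is vague; the paper uses an explicit $S^{\epsilon_I}_{\mathrm{top}}\cup S^{\epsilon_I}_{\mathrm{side}}$ construction (flat shift to $\mathrm{Im}(z^{\epsilon_I})$ over $\mathrm{D_{\delta_0}}$, vertical sides over $\partial\mathrm{D_{\delta_0}}$) and controls the side via concavity-\emph{up} in the imaginary directions, which bounds the side by the maximum of the top and bottom values --- both already handled.
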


 For the proof of Proposition \ref{critical}, we need the following

\begin{lemma}\label{absm} For each $\epsilon_I\in\{1,-1\}^I$ and any fixed $\{\alpha_j\}_{j\in J},$ 
$$\max_{\mathrm{D_H}}\mathrm{Im}\mathcal W^{\epsilon_I} \leqslant \mathrm{Im}\mathcal W^{\epsilon_I}\Big(\pi,\dots,\pi,\frac{7\pi}{4},\frac{7\pi}{4}\Big)=4\mathrm{Vol}(\Delta_{(0,\dots,0)})$$
where $\Delta_{(0,\dots,0)}$ is the regular ideal octahedron, and the equality holds if and only if $\alpha_1=\dots=\alpha_6=\pi$ and $\xi_1=\xi_2=\frac{7\pi}{4}.$
\end{lemma}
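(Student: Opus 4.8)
The plan is to reduce the inequality to a statement about a single real variable in the $\xi$ direction, then to a statement about the $\alpha$ variables, and in each case to use the concavity already recorded in Proposition \ref{convexity} together with the explicit identification of the critical point. First I would recall that on $\mathrm{D_H}$ we have, by (\ref{UV}) and the definition of $\mathcal W^{\epsilon_I}$,
\[
\mathrm{Im}\,\mathcal W^{\epsilon_I}(\alpha_I,\xi_1,\xi_2)=2V(\alpha_1,\dots,\alpha_6,\xi_1)+2V(\alpha_1,\dots,\alpha_6,\xi_2),
\]
where the $\epsilon_I$-dependent term $-\sum_{i\in I}2\epsilon_i(\alpha_i-\pi)(\beta_i-\pi)$ is purely real for $(\alpha_I,\xi_1,\xi_2)\in\mathrm{D_H}$ and hence contributes nothing to the imaginary part; this is why the bound is independent of $\epsilon_I$. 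Thus it suffices to maximize $V(\alpha_1,\dots,\alpha_6,\xi)$ over the (real) region cut out by requiring $(\alpha_1,\dots,\alpha_6)$ of hyperideal type and $\max\{\tau_i\}\le\xi\le\min\{\eta_j,2\pi\}$, and then double.

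Next I would carry out the maximization in two stages. For fixed admissible $\alpha=(\alpha_1,\dots,\alpha_6)$, the function $\xi\mapsto V(\alpha,\xi)$ is strictly concave down on its interval with derivative $\pm\infty$ at the endpoints (this is Costantino's result, recalled in the text before (\ref{VV})), so its maximum is attained at the unique critical point $\xi_\alpha=\xi(\alpha)$ and equals $V(\alpha,\xi_\alpha)=\mathrm{Vol}(\Delta_{|\pi-\alpha|})$ by (\ref{VV}). It remains to maximize $\alpha\mapsto\mathrm{Vol}(\Delta_{|\pi-\alpha|})$, the volume of the ideal or hyperideal tetrahedron with dihedral angles $|\pi-\alpha_i|$, over the hyperideal type region. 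By a standard fact (the Schläfli formula together with the fact that volume is maximized at the ideal configuration — equivalently, the critical point equations $\partial V/\partial\alpha_i=0$ force the tetrahedron to be the regular ideal one), the maximum is attained precisely when all $\alpha_i=\pi$, giving the regular ideal tetrahedron; and by a classical computation the volume of the regular ideal tetrahedron is the Lobachevsky constant, so that $2\mathrm{Vol}(\Delta_{(0,\dots,0)})$ (the sum of the two copies coming from $\xi_1$ and $\xi_2$ — note the regular ideal octahedron decomposes into two regular ideal tetrahedra) equals $4\,\mathrm{Vol}(\Delta_{(0,\dots,0)})$ with $\Delta_{(0,\dots,0)}$ the regular ideal octahedron, matching the stated value. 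Since both maximizations are strict (strict concavity in $\xi$ on the interior, and strict concavity / uniqueness of the critical point for the volume function in the $\alpha$'s near the regular point), the equality holds if and only if $\alpha_1=\dots=\alpha_6=\pi$ and $\xi_1=\xi_2=\xi(\pi,\dots,\pi)=\tfrac{7\pi}{4}$.

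Alternatively, and perhaps more in keeping with the local nature of the rest of Section \ref{Asy}, I would argue entirely within a neighborhood: Proposition \ref{convexity} shows $\mathrm{Im}\,\mathcal W^{\epsilon_I}$ is strictly concave down in the real variables $(\mathrm{Re}(\alpha_I),\mathrm{Re}(\xi_1),\mathrm{Re}(\xi_2))$ on $\mathrm D_{\delta_0,\mathbb C}$, hence on the real slice $\mathrm D_{\delta_0}$ it has at most one critical point, which must be the global maximum on that slice; a direct computation (using (\ref{pU}) and (\ref{pUa}) with all angles equal to $\pi$, which makes every $\sin$ ratio equal to $1$) shows the gradient of $\mathrm{Im}\,\mathcal W^{\epsilon_I}$ vanishes at $(\pi,\dots,\pi,\tfrac{7\pi}{4},\tfrac{7\pi}{4})$, so this is the maximizer on $\mathrm D_{\delta_0}$. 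To upgrade from the neighborhood $\mathrm D_{\delta_0}$ to all of $\mathrm D_H$, one uses that $V$ (equivalently each summand $\mathrm{Vol}(\Delta_{|\pi-\alpha|})$ after optimizing in $\xi$) is a \emph{globally} concave-type function whose only interior critical point is the regular ideal tetrahedron, together with the behavior on $\partial\mathrm D_H$ where one or more of the triangle inequalities degenerates and the volume drops; this boundary analysis is routine but must be done to claim the global, not merely local, maximum. The value at the maximizer is then computed from (\ref{V}): with all $\tau_i=\tfrac{3\pi}{2}$, all $\eta_j=2\pi$, and $\xi=\tfrac{7\pi}{4}$, every Lobachevsky term is evaluated at $\pm\tfrac{\pi}{4}$ or $\tfrac{7\pi}{4}$, and collecting them gives exactly $2\,\mathrm{Vol}(\Delta_{(0,\dots,0)})$ per copy of $V$, i.e. $4\,\mathrm{Vol}(\Delta_{(0,\dots,0)})$ in total, where $\Delta_{(0,\dots,0)}$ is the regular ideal octahedron.

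The main obstacle is the passage from the local statement (easy, via Proposition \ref{convexity} and an explicit gradient computation) to the global statement over all of $\mathrm D_H$: one must rule out competing maxima near the boundary of the hyperideal-type region, where the tetrahedron degenerates. I expect this to follow from the known global concavity properties of the hyperideal volume function (as in Costantino's paper \cite{C} and Ushijima \cite{U}), but it is the step that requires genuine input beyond the local analysis already set up in this section.
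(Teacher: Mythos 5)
Your first argument is essentially the paper's proof: reduce $\mathrm{Im}\,\mathcal W^{\epsilon_I}$ to $2V(\alpha,\xi_1)+2V(\alpha,\xi_2)$ on $\mathrm{D_H}$ (the $\epsilon_I$-dependent term being real there), maximize in each $\xi_s$ via Costantino's strict concavity to get $V(\alpha,\xi(\alpha))=\mathrm{Vol}(\Delta_{|\pi-\alpha|})$ by the Murakami--Yano/Ushijima formula, and then invoke the maximality of the volume of the all-angles-zero configuration among hyperideal tetrahedra, with the equality case following from the strictness of both maximizations. Note that this route is already global, so the ``local-to-global'' obstacle you raise at the end does not arise for it; that concern is only relevant to your alternative argument via Proposition \ref{convexity}, which the paper does not need here.

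There is, however, a concrete error in your identification of the maximizer and of the constant. When $\alpha_1=\dots=\alpha_6=\pi$ the dihedral angles $|\pi-\alpha_i|$ are all $0$, and the hyperideal ``tetrahedron'' $\Delta_{(0,\dots,0)}$ is the \emph{regular ideal octahedron}, of volume $v_8=8\Lambda\big(\frac{\pi}{4}\big)\approx 3.66$; it is not the regular ideal tetrahedron (volume $3\Lambda\big(\frac{\pi}{3}\big)\approx 1.01$), and the regular ideal octahedron does \emph{not} decompose into two regular ideal tetrahedra, so the sentence reconciling ``$2\mathrm{Vol}$'' with ``$4\mathrm{Vol}$'' by such a decomposition is false. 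The correct bookkeeping is simply
$$\mathrm{Im}\,\mathcal W^{\epsilon_I}(\alpha_I,\xi_1,\xi_2)=2V(\alpha,\xi_1)+2V(\alpha,\xi_2)\leqslant 4\,\mathrm{Vol}(\Delta_{|\pi-\alpha|})\leqslant 4\,\mathrm{Vol}(\Delta_{(0,\dots,0)}),$$
where the factor $4$ is $2\times 2$ and no geometric decomposition is involved. Your explicit Lobachevsky evaluation in the second paragraph (all $\delta$-terms vanish and $-\Lambda\big(\frac{7\pi}{4}\big)+4\Lambda\big(\frac{\pi}{4}\big)+3\Lambda\big(\frac{\pi}{4}\big)=8\Lambda\big(\frac{\pi}{4}\big)$) already yields $V\big(\pi,\dots,\pi,\frac{7\pi}{4}\big)=\mathrm{Vol}(\Delta_{(0,\dots,0)})$ and hence the correct total; once the misidentification is removed, your argument coincides with the paper's.
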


\begin{proof} On $\mathrm{D_H},$ we have 
$$\mathrm{Im}\mathcal W^{\epsilon_I}(\alpha_I,\xi_1,\xi_2)=2V(\alpha_1,\dots,\alpha_6,\xi_1)+2V(\alpha_1,\dots,\alpha_6,\xi_2)$$ 
for $V$ defined in (\ref{V}). Then the result is a consequence of the result of Costantino\,\cite{C} and the Murakami-Yano formula\,\cite{MY} (see Ushijima\,\cite{U} for the case of hyperideal tetrahedra). Indeed, by \cite{C}, for a fixed $\alpha=(\alpha_1,\dots,\alpha_6)$ of the hyperideal type, the function $f(\xi)$ defined by $f(\xi)=V(\alpha,\xi)$ is strictly concave down and the unique maximum point $\xi(\alpha)$ exists and lies in $(\max\{\tau_i\},\min\{\eta_j,2\pi\}),$ ie, $(\alpha,\xi(\alpha))\in\mathrm{B_H}.$ Then by \cite{U}, $V(\alpha,\xi(\alpha))=\mathrm{Vol}(\Delta_{|\pi-\alpha|}),$ the volume of the hyperideal tetrahedron $\Delta_{|\pi-\alpha|}$ with dihedral angles $|\pi-\alpha_1|,\dots, |\pi-\alpha_6|.$ Since $\xi(\pi,\dots,\pi)=\frac{7\pi}{4}$ and $\Delta_{(0,\dots,0)}$ has the maximum volume among all the hyperideal tetrahedra, $V\big(\pi,\dots,\pi,\frac{7\pi}{4}\big)=\mathrm{Vol}(\Delta_{(0,\dots,0)})\geqslant \mathrm{Vol}(\Delta_{|\pi-\alpha|})=V(\alpha,\xi(\alpha))\geqslant V(\alpha,\xi)$ for any $(\alpha,\xi)\in \mathrm{B_H}.$ 

For the equality part, suppose $(\alpha_1,\dots,\alpha_6,\xi_1,\xi_2)\neq \big(\pi,\dots,\pi,\frac{7\pi}{4},\frac{7\pi}{4}\big).$ If $(\alpha_1,\dots,\alpha_6)\neq (\pi,\dots,\pi),$ then 
$\mathrm{Im}\mathcal W^{\epsilon_I}(\alpha_I,\xi_1,\xi_2)\leqslant 4\mathrm{Vol}(\Delta_{|\pi-\alpha|})<4\mathrm{Vol}(\Delta_{(0,\dots,0)}).$ If $(\alpha_1,\dots,\alpha_6)=(\pi,\dots,\pi)$ but, say, $\xi_1\neq \frac{7\pi}{4},$ then the strict concavity of $f(\xi)$ implies that $\mathrm{Im}\mathcal W^{\epsilon_I}(\pi,\dots,\pi,\xi_1,\xi_2)< \mathrm{Im}\mathcal W^{\epsilon_I}\big(\pi,\dots,\pi,\frac{7\pi}{4},\frac{7\pi}{4}\big).$
\end{proof}

\begin{proof}[Proof of Proposition \ref{critical}] Let $\delta_0>0$ be as in Proposition \ref{convexity}.
By Lemma \ref{convexity}, Proposition \ref{absm} and the compactness of $\mathrm{D_H}\setminus\mathrm{D_{\delta_0}},$
$$4\mathrm{Vol}(\Delta_{(0,\dots,0)})>\max_{\mathrm{D_H}\setminus\mathrm{D_{\delta_0}}} \mathrm{Im}\mathcal W^{\epsilon_I}.$$
By Proposition \ref{crit} and continuity, if $\{\beta_i\}_{i\in I}$ and $\{\alpha_j\}_{j\in J}$ are sufficiently close to $\pi,$ then the critical point $z^{\epsilon_I}$ of $\mathcal W^{\epsilon_I}$  as in Proposition \ref{crit} lies in $\mathrm{D_{\delta_0,\mathbb C}},$ and $\mathrm{Im}\mathcal W^{\epsilon_I}(z^{\epsilon_I})=4\mathrm{Vol}(\Delta(\theta_I;\theta_J))$ is sufficiently close to $4\mathrm{Vol}(\Delta_{(0,\dots,0)})$ so that
 $$\mathrm{Im}\mathcal W^{\epsilon_I}(z^{\epsilon_I})>\max_{\mathrm{D_H}\setminus\mathrm{D_{\delta_0}}} \mathrm{Im}\mathcal W^{\epsilon_I}.$$
 
Therefore, we only need to estimate  the integral on $\mathrm{D_{\delta_0}}.$ To do this, we consider as drawn in Figure \ref{surface} the surface $S^{\epsilon_I}=S^{\epsilon_I}_{\text{top}}\cup S^{\epsilon_I}_{\text{side}}$ in $\overline{\mathrm{D_{\delta_0,\mathbb C}}},$ where
$$S^{\epsilon_I}_{\text{top}}=\{ (\alpha_I,\xi_1,\xi_2)\in \mathrm{D_{\delta_0,\mathbb C}}\ |\ ((\mathrm{Im}(\alpha_I)),\mathrm{Im}(\xi_1),\mathrm{Im}(\xi_2))=\mathrm{Im}(z^{\epsilon_I})\}$$
and
$$S^{\epsilon_I}_{\text{side}}=\{ (\alpha_I,\xi_1,\xi_2)+t\sqrt{-1}\cdot \mathrm{Im}(z^{\epsilon_I})\ |\ (\alpha_I,\xi_1,\xi_2)\in\partial \mathrm{D_{\delta_0}},t\in[0,1]\}.$$

\begin{figure}[htbp]
\centering
\includegraphics[scale=0.3]{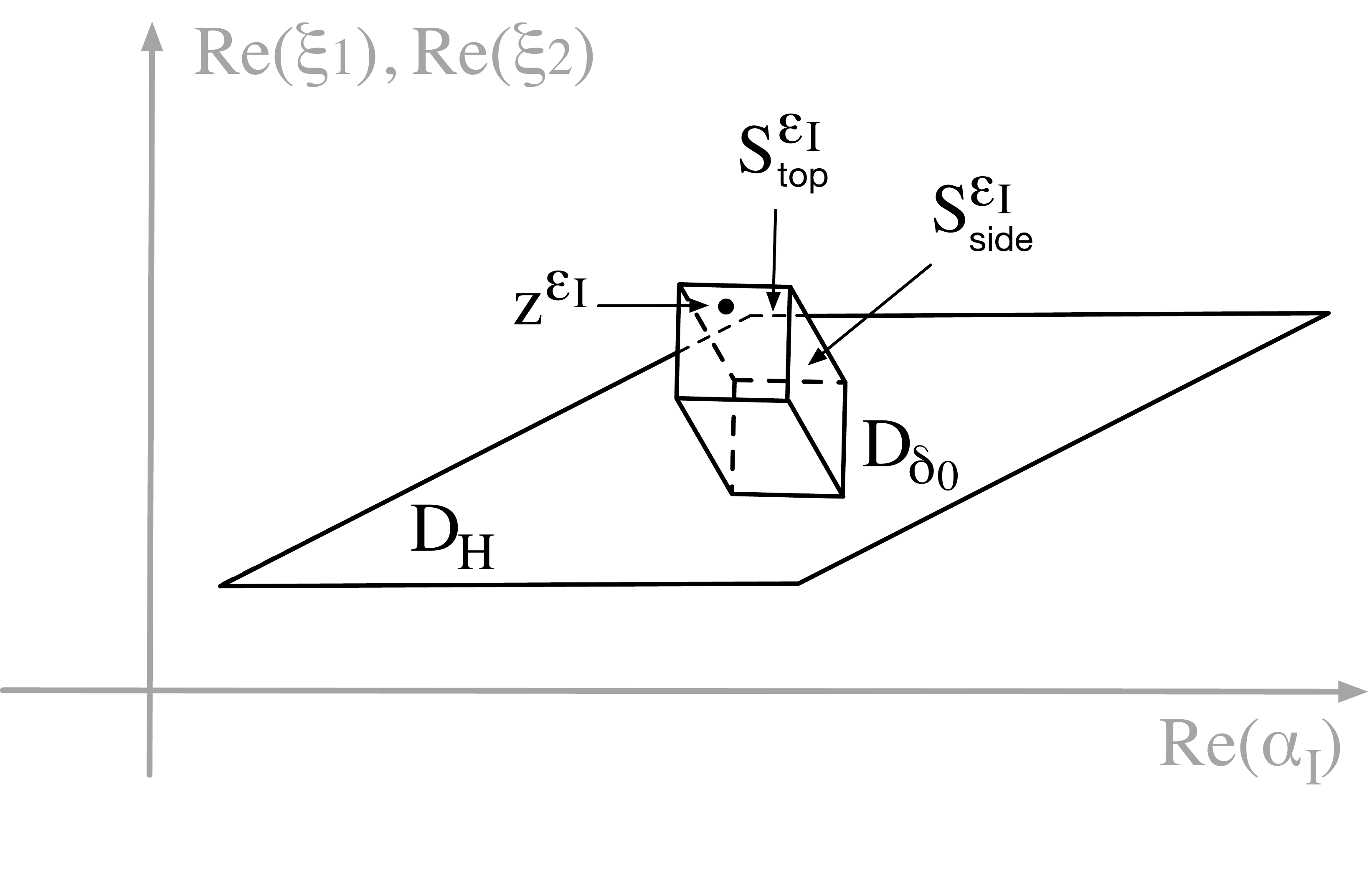}
\caption{The deformed surface $S^{\epsilon_I}$}
\label{surface} 
\end{figure}

By analyticity, the integral remains the same if we deform the domain from $\mathrm{D_{\delta_0}}$ to $S^{\epsilon_I}.$

By Proposition \ref{convexity}, $\mathrm{Im}\mathcal W^{\epsilon_I}$  is concave down on $S^{\epsilon_I}_{\text{top}}.$ Since $z^{\epsilon_I}$ is the critical points of $\mathrm{Im}\mathcal W^{\epsilon_I},$ it is the only absolute maximum on $S^{\epsilon_I}_{\text{top}}.$

On the side $S^{\epsilon_I}_{\text{side}},$ for each $(\alpha_I,\xi_1,\xi_2)\in \partial \mathrm{D_{\delta_0}},$ consider the function 
$$g^{\epsilon_I}_{(\alpha_I,\xi_1,\xi_2)}(t)= \mathrm{Im}\mathcal W^{\epsilon_I}((\alpha_I,\xi_1,\xi_2)+t\sqrt{-1}\cdot \mathrm{Im}(z^{\epsilon_I}))$$
on $[0,1].$ By Lemma \ref{convexity}, $g^{\epsilon_I}_{(\alpha_I,\xi_1,\xi_2)}(t)$ is concave up for any $(\alpha_I,\xi_1,\xi_2)\in \partial \mathrm{D_{\delta_0}}.$ As a consequence, $g^{\epsilon_I}_{(\alpha_I,\xi_1,\xi_2)}(t)\leqslant \max\{g^{\epsilon_I}_{(\alpha_I,\xi_1,\xi_2)}(0), g^{\epsilon_I}_{(\alpha_I,\xi_1,\xi_2)}(1)\}.$ Now by the previous two steps, since $(\alpha_I,\xi_1,\xi_2)\in \partial \mathrm{D_{\delta_0}},$ 
$$g^{\epsilon_I}_{(\alpha_I,\xi_1,\xi_2)}(0)= \mathrm{Im}\mathcal W^{\epsilon_I}(\alpha_I,\xi_1,\xi_2)<\mathrm{Im}\mathcal W^{\epsilon_I}(z^{\epsilon_I});$$
and since $(\alpha_I,\xi_1,\xi_2)+\sqrt{-1}\cdot \mathrm{Im}(z^{\epsilon_I})\in S^{\epsilon_I}_{\text{top}},$
$$g^{\epsilon_I}_{(\alpha_I,\xi_1,\xi_2)}(0)= \mathrm{Im}\mathcal W^{\epsilon_I}((\alpha_I,\xi_1,\xi_2)+\sqrt{-1}\cdot \mathrm{Im}(z^{\epsilon_I}))<\mathrm{Im}\mathcal W^{\epsilon_I}(z^{\epsilon_I}).$$ As a consequence,
 $$\mathrm{Im}\mathcal W^{\epsilon_I}(z^{\epsilon_I})>\max_{S^{\epsilon_I}_{\text{side}}} \mathrm{Im}\mathcal W^{\epsilon_I}.$$

Therefore, we proved that $z^{\epsilon_I}$ is the unique maximum point of $\mathrm{Im}\mathcal W^{\epsilon_I}$ on $S^{\epsilon_I}\cup\big( \mathrm{D_H}\setminus\mathrm{D_{\delta_0}}\big),$ and $\mathcal W^{\epsilon_I}$ has critical value $4\pi^2+4\sqrt{-1}\cdot\mathrm{Vol}(\Delta(\theta_I;\theta_J)$ at $z^{\epsilon_I}.$

By Proposition \ref{nonsingular}, $\det\mathrm{Hess}\mathcal W^{\epsilon_I}(z^{\epsilon_I})\neq 0.$

Finally,  we estimate the difference between $\mathcal W^{\epsilon_I}_r$ and $\mathcal W^{\epsilon_I}.$ By Lemma \ref{converge}, (3), we have
 $$\varphi_r\Big(\frac{\pi}{r}\Big)=\mathrm{Li}_2(1)+\frac{2\pi\sqrt{-1}}{r}\log\Big(\frac{r}{2}\Big)-\frac{\pi^2}{r}+O\Big(\frac{1}{r^2}\Big);$$
and for $z$ with $0<\mathrm{Re} z<\pi$ have
 $$\varphi_r\Big(z+\frac{k\pi}{r}\Big)=\varphi_r(z)+\varphi'_r(z)\cdot\frac{k\pi}{r}+O\Big(\frac{1}{r^2}\Big).$$
 Then by Lemma \ref{converge}, in $\big\{(\alpha_I,\xi_1,\xi_2)\in \overline{\mathrm{D_{H,\mathbb C}^\delta}}\ \big|\ |\mathrm{Im}(\alpha_i)| < L\text{ for } i\in I, |\mathrm{Im}(\xi_1)| < L, |\mathrm{Im}(\xi_2)| < L\}$ for some $L>0,$
 $$\mathcal W^{\epsilon_I}_r(\alpha_I,\xi_1,\xi_2)=\mathcal W^{\epsilon_I}(\alpha_I,\xi_1,\xi_2)-\frac{8\pi\sqrt{-1}}{r}\log\Big(\frac{r}{2}\Big)+\frac{4\pi \sqrt{-1} \cdot\kappa(\alpha_I,\xi_1,\xi_2)}{r}+\frac{\nu_r(\alpha_I,\xi_1,\xi_2)}{r^2},$$
with
 \begin{equation*}
 \begin{split}
&\kappa(\alpha_I,\xi_1,\xi_2)\\
=&\sum_{i=1}^4 \sqrt{-1}\tau_i- \sqrt{-1}\xi_1- \sqrt{-1}\xi_2-3\sqrt{-1}\pi \\
&+\frac{1}{2}\sum_{i=1}^4\sum_{j=1}^3\log\big(1-e^{2\sqrt{-1}(\eta_j-\tau_i)}\big)-\frac{3}{2}\sum_{i=1}^4\log\big(1-e^{2\sqrt{-1}(\tau_i-\pi)}\big)\\
&+\frac{3}{2}\log\big(1-e^{2\sqrt{-1}(\xi_1-\pi)}\big)-\frac{1}{2}\sum_{i=1}^4\log\big(1-e^{2\sqrt{-1}(\xi_1-\tau_i)}\big)-\frac{1}{2}\sum_{j=1}^3\log\big(1-e^{2\sqrt{-1}(\eta_j-\xi_1)}\big)\\
&+\frac{3}{2}\log\big(1-e^{2\sqrt{-1}(\xi_2-\pi)}\big)-\frac{1}{2}\sum_{i=1}^4\log\big(1-e^{2\sqrt{-1}(\xi_2-\tau_i)}\big)-\frac{1}{2}\sum_{j=1}^3\log\big(1-e^{2\sqrt{-1}(\eta_j-\xi_2)}\big)\\
 \end{split}
 \end{equation*}
 and  $|\nu_r(\alpha_I,\xi_1,\xi_2)|$ bounded from above by a constant independent of $r.$
 Then
  \begin{equation*}
 \begin{split}
&e^{\sum_{i\in I}\epsilon_i\sqrt{-1}\big(\alpha_i+\beta_i+\frac{2\pi}{r}\big)+\frac{r}{4\pi \sqrt{-1}}{\mathcal W}^{\epsilon_I}_r(\alpha_I,\xi_1,\xi_2)}\\
=&\Big(\frac{r}{2}\Big)^{-2}e^{\sum_{i\in I}\epsilon_i\sqrt{-1}(\alpha_i+\beta_i)+\kappa(\alpha_I,\xi_1,\xi_2)}\cdot e^{\frac{r}{4\pi \sqrt{-1}}\Big(\mathcal W^{\epsilon_I}(\alpha_I,\xi_1,\xi_2)+\frac{\nu_r(\alpha_I,\xi_1,\xi_2)-\sum_{i\in I}\epsilon_i 8\pi^2}{r^2}\Big)}.
 \end{split}
 \end{equation*}

Now let $D_{\mathbf z}=\big\{(\alpha_I,\xi_1,\xi_2)\in \overline{\mathrm{D_{H,\mathbb C}^\delta}}\ \big|\ |\mathrm{Im}(\alpha_i)| < L\text{ for } i\in I, |\mathrm{Im}(\xi_1)| < L, |\mathrm{Im}(\xi_2)| < L\}$ for some $L>0,$ Let $\mathbf a_r=((\beta_i)_{i\in I},(\alpha_j)_{j\in J})$ (recall that $\beta_i=\frac{2\pi n_i}{r}$ and $\alpha_j=\frac{2\pi m_j}{r}$ depends on $r$),
$f^{\mathbf a_r}(\alpha_I,\xi_1,\xi_2)=\frac{\mathcal W^{\epsilon_I}(\alpha_I,\xi_1,\xi_2)}{4\pi\sqrt{-1}},$ $g^{\mathbf a_r}(\alpha_I,\xi_1,\xi_2)=\psi(\alpha_I,\xi_1,\xi_2)e^{\sum_{i\in I}\epsilon_i\sqrt{-1}(\alpha_i+\beta_i)+\kappa(\alpha_I,\xi_1,\xi_2)},$ $f_r^{\mathbf a_r}(\alpha_I,\xi_1,\xi_2)=\frac{{\mathcal W}_r^{\epsilon_I}(\alpha_I,\xi_1,\xi_2)}{4\pi\sqrt{-1}}-\frac{2}{r}\log\big(\frac{r}{2}\big),$ $\upsilon_r(\alpha_I,\xi_1,\xi_2)=\nu_r(\alpha_I,\xi_1,\xi_2)-\sum_{i\in I}\epsilon_i 8\pi^2,$ $S_r=S^{\epsilon_I}\cup \big(\mathrm{D_H}\setminus\mathrm{D_{\delta_0}}\big)$ and $z^{\epsilon_I}$ is the critical point of $f$ in $D_{\mathbf z}.$  Then all the conditions of Proposition \ref{saddle} are satisfied and the result follows. 

When $\beta_i=\alpha_j=\pi,$ a direct computation shows that 
\begin{equation*}
\begin{split}
C^{\epsilon_I}(z^{\epsilon_I})=\frac{r^{|I|+2}}{2^{2|I|+2}\pi^{|I|+2}}\Big(\frac{2\pi}{r}\Big)^{
\frac{|I|+2}{2}}\Big(\frac{r}{2}\Big)^{-2} g\Big(\pi,\dots,\pi,\frac{7\pi}{4},\frac{7\pi}{4}\Big)=\frac{r^{\frac{|I|}{2}-1}}{2^{\frac{3|I|}{2}+1}\pi^{\frac{|I|}{2}+1}}.
\end{split}
\end{equation*}
\end{proof}

\begin{corollary}\label{5.8} If $\epsilon>0$ is sufficiently small and all $\{\beta_i\}_{i\in I}$ and $\{\alpha_j\}_{j\in J}$ are in $\{\pi-\epsilon,\pi+\epsilon
\},$ then 
$$\sum_{\epsilon_I\in\{1,-1\}^I}\frac{C^{\epsilon_I}(z^{\epsilon_I})}{\sqrt{-\det\mathrm{Hess}\Big(\frac{\mathcal W^{\epsilon_I}(z^{\epsilon_I})}{4\pi\sqrt{-1}}\Big)}}\neq 0.$$
\end{corollary}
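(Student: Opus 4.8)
The plan is to evaluate the sum at the fully symmetric point $\beta_i=\alpha_j=\pi$ for all $i\in I$ and $j\in J$, where all $2^{|I|}$ summands degenerate to a common nonzero value, and then upgrade to a neighborhood by continuity.

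First I would note that when $\beta_i=\pi$ for all $i\in I$, the term $-\sum_{i\in I}2\epsilon_i(\alpha_i-\pi)(\beta_i-\pi)$ in $\mathcal W^{\epsilon_I}$ vanishes identically, so $\mathcal W^{\epsilon_I}(\alpha_I,\xi_1,\xi_2)=U(\alpha_1,\dots,\alpha_6,\xi_1)+U(\alpha_1,\dots,\alpha_6,\xi_2)$ is independent of $\epsilon_I$; in particular all the functions $\mathcal W^{\epsilon_I}$ coincide. Correspondingly, by Proposition \ref{crit} with all $l_i=0$, every critical point $z^{\epsilon_I}$ equals $\big(\pi_I,\frac{7\pi}{4},\frac{7\pi}{4}\big)$, so the Hessians $\mathrm{Hess}\big(\mathcal W^{\epsilon_I}(z^{\epsilon_I})/(4\pi\sqrt{-1})\big)$ are all the same matrix, which is non-singular by Proposition \ref{nonsingular}; and by Proposition \ref{critical} all the coefficients $C^{\epsilon_I}(z^{\epsilon_I})$ equal the common positive number $\frac{r^{|I|/2-1}}{2^{3|I|/2+1}\pi^{|I|/2+1}}$. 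Fixing a branch of $\sqrt{-\det\mathrm{Hess}(\mathcal W^{\epsilon_I}(z^{\epsilon_I})/(4\pi\sqrt{-1}))}$, which is legitimate since the argument is a fixed nonzero complex number, each summand equals the same nonzero complex number, so the whole sum equals $2^{|I|}$ times that number and is nonzero.

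Then I would deduce the statement in a neighborhood of $\pi$. By Proposition \ref{critical} each $C^{\epsilon_I}(z^{\epsilon_I})$ depends continuously on $\{\beta_i\}_{i\in I}$ and $\{\alpha_j\}_{j\in J}$; by Proposition \ref{crit} together with the continuity of $\xi(\cdot)$ the critical point $z^{\epsilon_I}$ varies continuously and stays in $\mathrm{D_{\delta_0,\mathbb C}}$; and by Proposition \ref{nonsingular} the quantity $-\det\mathrm{Hess}\big(\mathcal W^{\epsilon_I}(z^{\epsilon_I})/(4\pi\sqrt{-1})\big)$ is continuous and nowhere zero on $\mathrm{D_{\delta_0,\mathbb C}}$, so its square root can be chosen to depend continuously on $\{\beta_i\}_{i\in I}$ and $\{\alpha_j\}_{j\in J}$. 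Hence $\sum_{\epsilon_I\in\{1,-1\}^I}\frac{C^{\epsilon_I}(z^{\epsilon_I})}{\sqrt{-\det\mathrm{Hess}(\mathcal W^{\epsilon_I}(z^{\epsilon_I})/(4\pi\sqrt{-1}))}}$ is a continuous function of $(\beta_I,\alpha_J)$ near $(\pi,\dots,\pi)$ taking a nonzero value there, and choosing $\epsilon>0$ small enough that the sum remains away from $0$ on $\{\pi-\epsilon,\pi+\epsilon\}^{|I|+|J|}$ gives the claim.

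The only point requiring care — and the main, rather mild, obstacle — is the branch of $\sqrt{-\det\mathrm{Hess}}$: a priori it is the one dictated by the Saddle Point Method (Proposition \ref{saddle}), so one must check that this choice is locally continuous in the parameters and that, at the symmetric point, it assigns the same value to all $\epsilon_I$. Both hold because at $\beta_i=\alpha_j=\pi$ the data $\mathcal W^{\epsilon_I}$, $z^{\epsilon_I}$, and the steepest-descent surface $S^{\epsilon_I}$ are literally independent of $\epsilon_I$, while non-singularity of the Hessian on the connected set $\mathrm{D_{\delta_0,\mathbb C}}$ allows a single continuous branch to be propagated; consequently no cancellation among the $2^{|I|}$ terms can occur once the parameters are sufficiently close to $\pi$.
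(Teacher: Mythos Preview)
Your proposal is correct and follows essentially the same approach as the paper: evaluate at the fully symmetric point $\beta_i=\alpha_j=\pi$, where all $\mathcal W^{\epsilon_I}$, all $z^{\epsilon_I}$, all $C^{\epsilon_I}(z^{\epsilon_I})$, and all Hessian determinants coincide so that the sum is $2^{|I|}$ times a common nonzero term, and then invoke continuity. Your additional remark about the branch of the square root is a reasonable point of care that the paper leaves implicit.
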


\begin{proof} If $\beta_i=\alpha_j=\pi$ for all $i\in I$ and $j\in J,$ then all $z^{\epsilon_I}=\big(\pi,\dots,\pi,\frac{7\pi}{4},\frac{7\pi}{4}\big)$ and all $\mathcal W^{\epsilon_I}$ are the same functions. As a consequence, all the $C^{\epsilon_I}(z^{\epsilon_I})$'s and all Hessian determinants $\det\mathrm{Hess}\Big(\frac{\mathcal W^{\epsilon_I}(z^{\epsilon_I})}{4\pi\sqrt{-1}}\Big)$'s are the same at this point, implying that the sum is not equal to zero. Then by continuity, if $\epsilon$ is small enough, then the sum remains non-zero.
\end{proof}

\begin{remark} We suspect that all $C^{\epsilon_I}(z^{\epsilon_I})$'s and all $\det\mathrm{Hess}\Big(\frac{\mathcal W^{\epsilon_I}(z^{\epsilon_I})}{4\pi\sqrt{-1}}\Big)$'s are always the same for any given $\{\beta_i\}_{i\in I}$ and $\{\alpha_j\}_{j\in J}.$
\end{remark}

\subsection{Estimate of the other Fourier coefficients}\label{ot}

\begin{proposition}\label{other} Suppose $\{\beta_i\}_{i\in I}$ and $\{\alpha_j\}_{j\in J}$ are in $\{\pi-\epsilon,\pi+\epsilon
\}$ for a sufficiently small $\epsilon>0.$  If $(m_I,n_1,n_2)\neq(0,\dots,0),$ then
$$\Big|\widehat{f^{\epsilon_I}_r}(m_I,n_1,n_2)\Big|<O\Big(e^{\frac{r}{2\pi}\big(2\mathrm{Vol}(\Delta(\theta_I;\theta_J))-\epsilon'\big)}\Big)$$
for some $\epsilon'>0.$
\end{proposition}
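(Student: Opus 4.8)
The plan is to run the argument of Proposition~\ref{critical} with the extra linear term carried along. Using Proposition~\ref{4.2}, abbreviate $L=\sum_{i\in I}2\pi m_i\alpha_i+4\pi n_1\xi_1+4\pi n_2\xi_2$, so the exponent there is $\frac{r}{4\pi\sqrt{-1}}(\mathcal W_r^{\epsilon_I}-L)$, and with $\delta_0$ as in Proposition~\ref{convexity} split the domain of integration as $\mathrm{D_H}=\mathrm{D_{\delta_0}}\cup(\mathrm{D_H}\setminus\mathrm{D_{\delta_0}})$. On $\mathrm{D_H}\setminus\mathrm{D_{\delta_0}}$ all coordinates are real, so $L$ is real and $\mathrm{Im}(\mathcal W_r^{\epsilon_I}-L)=\mathrm{Im}\,\mathcal W_r^{\epsilon_I}$; the modulus of the integrand there therefore obeys exactly the bound used for the leading coefficient, and by Lemma~\ref{absm} and the compactness of $\mathrm{D_H}\setminus\mathrm{D_{\delta_0}}$ it is at most $\mathrm{poly}(r)\cdot e^{\frac{r}{4\pi}(4\mathrm{Vol}(\Delta_{(0,\dots,0)})-\eta)}$ for some $\eta=\eta(\delta_0)>0$. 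Hence, once $\theta_1,\dots,\theta_6$ are small enough that $\mathrm{Vol}(\Delta(\theta_I;\theta_J))$ is within $\eta/8$ of $\mathrm{Vol}(\Delta_{(0,\dots,0)})$, this contribution is $O\big(e^{\frac{r}{2\pi}(2\mathrm{Vol}(\Delta(\theta_I;\theta_J))-\epsilon')}\big)$ for any $\epsilon'<\eta/4$, uniformly in $(m_I,n_1,n_2)$.

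The new point is the contribution of $\mathrm{D_{\delta_0}}$, where $\psi\equiv1$ and the integrand agrees with a function holomorphic on $\mathrm{D_{\delta_0,\mathbb C}}$ (for $\delta_0$ small the arguments of $\varphi_r$ in $\mathcal W_r^{\epsilon_I}$ stay in a compact subset of the strip $\{-\tfrac{\pi}{r}<\mathrm{Re}\,z<\pi+\tfrac{\pi}{r}\}$). Fix a small $c>0$ and let $\sigma\in\{-1,0,1\}^{|I|+2}$ have $\mathrm{sgn}(m_i)$ in the $\alpha_i$-slots and $\mathrm{sgn}(n_s)$ in the $\xi_s$-slots. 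I would deform $\mathrm{D_{\delta_0}}$, keeping $\partial\mathrm{D_{\delta_0}}$ fixed and staying in $\mathrm{D_{\delta_0,\mathbb C}}$, exactly as in Proposition~\ref{critical}, to a surface $S=S_{\mathrm{top}}\cup S_{\mathrm{side}}$ whose top lies at imaginary height $\mathrm{Im}(z^{\epsilon_I})+c\,\sigma$ rather than $\mathrm{Im}(z^{\epsilon_I})$. On $S_{\mathrm{top}}$ the imaginary parts are pinned, so $-\mathrm{Im}\,L$ equals the constant $-\sum_{i\in I}2\pi m_i(\epsilon_i\mu_il_i+c\,\mathrm{sgn}(m_i))-4\pi\sum_{s=1,2}n_s(\lambda+c\,\mathrm{sgn}(n_s))$, where $\epsilon_i\mu_il_i$ and $\lambda:=\mathrm{Im}\,\xi((\pi+\epsilon_i\mu_i\sqrt{-1}l_i)_{i\in I},\alpha_J)$ are the imaginary parts of the coordinates of $z^{\epsilon_I}$; when $\theta_1,\dots,\theta_6$ are small enough that $\max_i l_i$ and $|\lambda|$ are below $c/2$, this constant is $\le-\pi c\big(\sum_{i\in I}|m_i|+\sum_s|n_s|\big)\le-\pi c$, since $(m_I,n_1,n_2)\neq(0,\dots,0)$. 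Meanwhile $\mathrm{Im}\,\mathcal W^{\epsilon_I}$ is strictly concave down on $S_{\mathrm{top}}$ by Proposition~\ref{convexity}, so it is maximised at the interior point $\widetilde z$ where its gradient in the real directions vanishes; by the implicit function theorem $\widetilde z$ is within $O(c)$ of $z^{\epsilon_I}$, and since $z^{\epsilon_I}$ is a critical point of the holomorphic function $\mathcal W^{\epsilon_I}$ one gets $\mathrm{Im}\,\mathcal W^{\epsilon_I}(\widetilde z)=4\mathrm{Vol}(\Delta(\theta_I;\theta_J))+O(c^2)$. On $S_{\mathrm{side}}$ the argument of Proposition~\ref{critical} goes through verbatim: along the vertical segments $\mathrm{Im}(\mathcal W^{\epsilon_I}-L)$ is convex (by Proposition~\ref{convexity}, the added linear term not affecting convexity), hence bounded by its value at an endpoint, lying on $S_{\mathrm{top}}$ or on $\partial\mathrm{D_{\delta_0}}$ (where, as in the first paragraph, $\mathrm{Im}\,\mathcal W^{\epsilon_I}\le4\mathrm{Vol}(\Delta_{(0,\dots,0)})-\eta$). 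Combining these, $\max_S\mathrm{Im}(\mathcal W^{\epsilon_I}-L)\le4\mathrm{Vol}(\Delta(\theta_I;\theta_J))-\pi c+O(c^2)$, which for $c$ small is $\le4\mathrm{Vol}(\Delta(\theta_I;\theta_J))-2\epsilon'$ with $\epsilon':=\pi c/4$. Replacing $\mathcal W_r^{\epsilon_I}$ by $\mathcal W^{\epsilon_I}$ up to the $O(\tfrac{\log r}{r})$ correction as in Proposition~\ref{critical}, and bounding $\big|\int_S\big|$ by $(\text{measure of }S)\cdot\max_S(\text{integrand})$ — an upper bound suffices, no saddle-point asymptotics are needed — the $\mathrm{D_{\delta_0}}$-part is $O\big(e^{\frac{r}{2\pi}(2\mathrm{Vol}(\Delta(\theta_I;\theta_J))-\epsilon')}\big)$, in fact with an extra factor $e^{-\frac{r}{4}c(\sum_i|m_i|+\sum_s|n_s|)}$. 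Adding the two parts and shrinking $\epsilon'$ slightly to absorb polynomial factors gives the claim.

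I expect the real difficulty to be bookkeeping the order of the parameters rather than any single estimate: $\delta_0$ must be fixed first (from Proposition~\ref{convexity}), which fixes $\eta$; then $c$ must be small enough for the deformed surface to remain inside $\mathrm{D_{\delta_0,\mathbb C}}$ (hence where $\psi\equiv1$ and the integrand is holomorphic) and for the $O(c^2)$ term to be dominated by $\pi c$; and only then may $\epsilon$ — the bound on the $\theta_i$ — be taken small enough to force $\max_i l_i<c/2$ and $|\lambda|<c/2$, to keep $z^{\epsilon_I}\in\mathrm{D_{\delta_0,\mathbb C}}$, and to make $\mathrm{Vol}(\Delta(\theta_I;\theta_J))$ within $\eta/8$ of $\mathrm{Vol}(\Delta_{(0,\dots,0)})$, whereupon $\epsilon'=\pi c/4$ (shrunk below $\eta/4$) works. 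The only other point needing attention, already present in the proof of Proposition~\ref{critical}, is the uniform control of $\mathcal W_r^{\epsilon_I}-\mathcal W^{\epsilon_I}$ on the deformed contour and, on $\mathrm{D_H}\setminus\mathrm{D_{\delta_0}}$, near $\partial\mathrm{D_H}$, where the $\varphi_r$-arguments approach the boundary of the strip; there the rapid vanishing of the cutoff $\psi$ absorbs the at most polynomial blow-up.
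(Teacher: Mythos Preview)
Your proof is correct and follows the same overall architecture as the paper's: split $\mathrm{D_H}$ into a small ball and its complement, observe that on the complement $L$ is real so nothing changes from the leading case, and on the ball deform the contour into the imaginary directions so that $-\mathrm{Im}\,L$ provides a negative contribution that beats the variation of $\mathrm{Im}\,\mathcal W^{\epsilon_I}$.

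The execution inside the ball is genuinely different, however. The paper picks a single nonzero component, say $m_1$, deforms only $\alpha_1$ by $\pm\delta_1$, and controls the variation of $\mathrm{Im}\,\mathcal W^{\epsilon_I}$ by a first-order Mean Value Theorem estimate: the directional derivative is bounded by a constant just below $\frac{2\pi}{2\sqrt{2|I|+4}}$ on a $\delta_1$-ball, so the change in $\mathrm{Im}\,\mathcal W^{\epsilon_I}$ is strictly less than $2\pi\delta_1$, while $-\mathrm{Im}\,L$ drops by at least $2\pi\delta_1$. Your argument instead deforms simultaneously in every nonzero direction by a small $c$ and uses the second-order structure at the critical point $z^{\epsilon_I}$: since the holomorphic gradient vanishes there, $\mathrm{Im}\,\mathcal W^{\epsilon_I}$ can only move by $O(c^2)$ on the shifted slice, which is dominated by the linear gain $\pi c$ from $-\mathrm{Im}\,L$. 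This is a cleaner mechanism --- it ties the estimate directly to the criticality of $z^{\epsilon_I}$ rather than to a calibrated derivative bound --- and it has the practical payoff that your bound comes with the extra factor $e^{-\frac{r}{4}c(\sum_i|m_i|+\sum_s|n_s|)}$, so summability over $(m_I,n_1,n_2)\in\mathbb Z^{|I|+2}$ in Proposition~\ref{Poisson} is immediate. The paper's bound as literally stated is a fixed $\epsilon'$ independent of $(m_I,n_1,n_2)$, and one has to revisit the proof to extract the decay in $|m_1|$ needed for the sum. Your discussion of the order in which $\delta_0$, $c$, and $\epsilon$ are chosen is accurate and matches what the paper needs as well.
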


\begin{proof} Recall that if $\beta_i=\alpha_j=\pi$ for all $i\in I$ and $j\in J,$ then the total derivative 
$$D\mathcal W^{\epsilon_I}\Big(\pi,\dots,\pi,\frac{7\pi}{4}, \frac{7\pi}{4}\Big)=(0,\dots,0).$$
Hence there exists a $\delta_1>0$ and an $\epsilon>0$ such that if $\{\beta_i\}_{i\in I}$ and $\{\alpha_j\}_{j\in J}$ are in $\{\pi-\epsilon,\pi+\epsilon
\},$ then for all $(\alpha_I,\xi_1,\xi_2)\in D_{\delta_1,\mathbb C}$ and for any unit vector $\mathbf u=((u_i)_{i\in I},w_1,w_2)\in \mathbb R^{|I|+2},$ the directional derivatives 
$$|D_{\mathbf u}\mathrm{Im}\mathcal W^{\epsilon_I}(\alpha_I,\xi_1,\xi_2)|=\Big|\sum_{i\in I}u_i\frac{\partial \mathrm{Im}\mathcal W^{\epsilon_I}}{\partial \mathrm{Im}(\alpha_i)}+w_1\frac{\partial \mathrm{Im}\mathcal W^{\epsilon_I}}{\partial \mathrm{Im}(\xi_1)}+w_2\frac{\partial \mathrm{Im}\mathcal W^{\epsilon_I}}{\partial \mathrm{Im}(\xi_2)}\Big|<\frac{2\pi-\epsilon''}{2\sqrt{2|I|+4}}$$
for some $\epsilon''>0.$

On $\mathrm{D_H},$ we have 
\begin{equation*}
\begin{split}
 &\mathrm{Im}\Big(\mathcal W^{\epsilon_I}(\alpha_I,\xi_1,\xi_2)-\sum_{i\in I}2\pi m_i\alpha_i-4\pi n_1\xi_1-4\pi n_2\xi_2\Big)=\mathrm{Im}\mathcal W^{\epsilon_I}(\alpha_I,\xi_1,\xi_2).
\end{split}
\end{equation*}
Then by Lemma \ref{convexity}, Proposition \ref{absm} and the compactness of $\mathrm{D_H}\setminus\mathrm{D_{\delta_1}},$
$$4\mathrm{Vol}(\Delta_{(0,\dots,0)})>\max_{\mathrm{D_H}\setminus\mathrm{D_{\delta_1}}} \mathrm{Im}\Big(\mathcal W^{\epsilon_I}(\alpha_I,\xi_1,\xi_2)-\sum_{i\in I}2\pi m_i\alpha_i-4\pi n_1\xi_1-4\pi n_2\xi_2\Big)+\epsilon'''$$
for some $\epsilon'''>0.$
By Proposition \ref{crit} and continuity, if $\{\beta_i\}_{i\in I}$ and $\{\alpha_j\}_{j\in J}$ are sufficiently close to $\pi,$ then the critical point $z^{\epsilon_I}$ of $\mathcal W^{\epsilon_I}$  as in Proposition \ref{crit} lies in $\mathrm{D_{\delta_1,\mathbb C}},$ and $\mathrm{Im}\mathcal W^{\epsilon_I}(z^{\epsilon_I})=4\mathrm{Vol}(\Delta(\theta_I;\theta_J))$ is sufficiently close to $4\mathrm{Vol}(\Delta_{(0,\dots,0)})$ so that
\begin{equation}\label{b}
\mathrm{Im}\mathcal W^{\epsilon_I}(z^{\epsilon_I})>\max_{\mathrm{D_H}\setminus\mathrm{D_{\delta_1}}} \mathrm{Im}\Big(\mathcal W^{\epsilon_I}(\alpha_I,\xi_1,\xi_2)-\sum_{i\in I}2\pi m_i\alpha_i-4\pi n_1\xi_1-4\pi n_2\xi_2\Big)+\epsilon'''.
\end{equation}
 
Therefore, we only need to estimate  the integral on $\mathrm{D_{\delta_1}}.$ 

If $(m_I,n_1,n_2)\neq (0,\dots,0),$ then there is at least one of $\{m_i\}_{i\in I},$ $n_1$ and $n_2$ that is nonzero. Without loss of generality, assume that $m_1\neq 0.$

If $m_1>0,$ then consider the surface $S^+=S^+_{\text{top}}\cup S^+_{\text{side}}$ in $\overline{\mathrm{D_{\delta_1,\mathbb C}}}$ where
$$S^+_{\text{top}}=\{ (\alpha_I,\xi_1,\xi_2)\in \mathrm{D_{\delta_1,\mathbb C}}\ |\ (\mathrm{Im}(\alpha_I),\mathrm{Im}(\xi_1),\mathrm{Im}(\xi_2))=(\delta_1,0,\dots,0)\}$$
and
$$S^+_{\text{side}}=\{ (\alpha_I,\xi_1,\xi_2)+(t\sqrt{-1}\delta_1,0,\dots,0)\ |\ (\alpha_I,\xi_1,\xi_2)\in\partial \mathrm{D_{\delta_1}}, t\in[0,1]\}.$$      
On the top, for any $(\alpha_I,\xi_1,\xi_2)\in S^+_{\text{top}},$ by the Mean Value Theorem, 
\begin{equation*}
\begin{split}
\big|\mathrm{Im}\mathcal W^{\epsilon_I}(z^{\epsilon_I})-\mathrm{Im}\mathcal W^{\epsilon_I}(\alpha_I,\xi_1,\xi_2)\big|
=&\big|D_{\mathbf u}\mathrm{Im}\mathcal W^{\epsilon_I}(z)\big|\cdot\big\|z^{\epsilon_I}-(\alpha_I,\xi_1,\xi_2)\big\|\\
<&\frac{2\pi-\epsilon''}{2\sqrt{2|I|+4}}\cdot2\sqrt{2|I|+4} \delta_1\\
=&2\pi\delta_1-\epsilon''\delta_1,
\end{split}
\end{equation*}
where $z$ is some point on the line segment connecting $z^{\epsilon_I}$ and $(\alpha_I,\xi_1,\xi_2),$ $\mathbf u=\frac{z^{\epsilon_I}-(\alpha_I,\xi_1,\xi_2)}{\|z^{\epsilon_I}-(\alpha_I,\xi_1,\xi_2)\|}$ and $2\sqrt{2|I|+4} \delta_1$ is the diameter of $\mathrm{D_{\delta_1,\mathbb C}}.$
Then
\begin{equation*}
\begin{split}
\mathrm{Im}\Big(\mathcal W^{\epsilon_I}(\alpha_I,\xi_1,\xi_2)-\sum_{i\in I}2\pi m_i\alpha_i-4\pi n_1\xi_1-4\pi n_2\xi_2\Big)=&\mathrm{Im}\mathcal W^{\epsilon_I}(\alpha_I,\xi_1,\xi_2)-2\pi m_1\delta_1\\
<&\mathrm{Im}\mathcal W^{\epsilon_I}(z^{\epsilon_I})+2\pi\delta_1-\epsilon''\delta_1-2\pi\delta_1\\
=&\mathrm{Im}\mathcal W^{\epsilon_I}(z^{\epsilon_I})-\epsilon'' \delta_1.
\end{split}
\end{equation*}

On the side, for any point $(\alpha_I,\xi_1,\xi_2)+(t\sqrt{-1}\delta_1,0,\dots,0)\in S^+_{\text{side}},$ by the Mean Value Theorem again, we have
$$\big|\mathrm{Im}\mathcal W^{\epsilon_I}\big((\alpha_I,\xi_1,\xi_2)+(t\sqrt{-1}\delta_1,0,\dots,0)\big)-\mathrm{Im}\mathcal W^{\epsilon_I}(\alpha_I,\xi_1,\xi_2)\big|<\frac{2\pi-\epsilon''}{2\sqrt{2|I|+4}} t\delta_1.$$
Then 
\begin{equation*}
\begin{split}
\mathrm{Im}\mathcal W^{\epsilon_I}\big((\alpha_I,\xi_1,\xi_2)+(t\sqrt{-1}\delta_1,0,\dots,0)\big)-2\pi m_1 t\delta_1<&\mathrm{Im}\mathcal W^{\epsilon_I}(\alpha_I,\xi_1,\xi_2)+\frac{2\pi-\epsilon''}{2\sqrt{2|I|+4}} t\delta_1-2\pi t\delta_1\\
<&\mathrm{Im}\mathcal W^{\epsilon_I}(\alpha_I,\xi_1,\xi_2)\\
<&\mathrm{Im}\mathcal W^{\epsilon_I}(z^{\epsilon_I})-\epsilon''',
\end{split}
\end{equation*}
where the last inequality comes from the fact that $(\alpha_I,\xi_1,\xi_2)\in \partial \mathrm{D_{\delta_1}}\subset \mathrm{D_H}\setminus\mathrm{D_{\delta_1}}$ and (\ref{b}).

Now let $\epsilon'=\min\{\epsilon''\delta_1,\epsilon'''\},$ then on $S^+\cup \big(\mathrm{D_H}\setminus\mathrm{D_{\delta_1}}\big),$ 
$$\mathrm{Im}\Big(\mathcal W^{\epsilon_I}(\alpha_I,\xi_1,\xi_2)-\sum_{i\in I}2\pi m_i\alpha_i-4\pi n_1\xi_1-4\pi n_2\xi_2\Big)<\mathrm{Im}\mathcal W^{\epsilon_I}(z^{\epsilon_I})-\epsilon',$$
and the result follows.

If $m_1<0,$ then we consider the surface $S^-=S^-_{\text{top}}\cup S^-_{\text{side}}$ in $\overline{\mathrm{D_{\delta_1,\mathbb C}}}$ where
$$S^-_{\text{top}}=\{ (\alpha_I,\xi_1,\xi_2)\in \mathrm{D_{\delta_1,\mathbb C}}\ |\ (\mathrm{Im}(\alpha_I),\mathrm{Im}(\xi_1),\mathrm{Im}(\xi_2))=(-\delta_1,0,\dots,0)\}$$
and
$$S^-_{\text{side}}=\{ (\alpha_I,\xi_1,\xi_2)-(t\sqrt{-1}\delta_1,0,\dots,0)\ |\ (\alpha_I,\xi_1,\xi_2)\in\partial \mathrm{D_{\delta_1}}, t\in[0,1]\}.$$      
Then the same estimate as in the previous case proves that on
$S^-\cup \big(\mathrm{D_H}\setminus\mathrm{D_{\delta_1}}\big),$ 
$$\mathrm{Im}\Big(\mathcal W^{\epsilon_I}(\alpha_I,\xi_1,\xi_2)-\sum_{i\in I}2\pi m_i\alpha_i-4\pi n_1\xi_1-4\pi n_2\xi_2\Big)<\mathrm{Im}\mathcal W^{\epsilon_I}(z^{\epsilon_I})-\epsilon',$$
from which the result follows.
\end{proof}


\subsection{Estimate of the error term}\label{ee}

The goal of this section is to estimate the error term in Proposition \ref{Poisson}.

\begin{proposition}\label{error} Suppose $\{\alpha_j\}_{j\in J}$ are in $\{\pi-\epsilon,\pi+\epsilon
\}$ for a sufficiently small $\epsilon>0.$ Then
the error term in Proposition \ref{Poisson} is less than $O\big(e^{\frac{r}{2\pi}(2\mathrm{Vol}(\Delta(\theta_I;\theta_J))-\epsilon')}\big)$
for some $\epsilon'>0.$
\end{proposition}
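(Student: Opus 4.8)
The plan is to write the error term in Proposition~\ref{Poisson} out explicitly and then bound it by the method of Proposition~\ref{other}. Recall from the discussion preceding Proposition~\ref{4.2} that
$$\mathrm{\widehat {Y}}_r(b_I;a_J)=\frac{(-1)^{|I|\big(\frac r2+1\big)}n(a_J)}{4\{1\}^{|I|-2}}\sum_{(a_I',k_1,k_2)\in\mathbb Z^{|I|+2}}f_r\big(2a_I',k_1,k_2\big)+\textrm{error term},$$
with $f_r=\sum_{\epsilon_I}f_r^{\epsilon_I}=\sum_{\epsilon_I}\psi\cdot g_r^{\epsilon_I}.$ A lattice point $(\frac{2\pi a_I}{r},\frac{2\pi k_1}{r},\frac{2\pi k_2}{r})$ lies in $\mathrm{D_H}$ exactly when the associated $6$-tuple $(a_1,\dots,a_6)$ is of the hyperideal type and $\max\{T_i\}\leqslant k_s\leqslant\min\{Q_j,r-2\},$ that is, exactly for the hyperideal-type terms occurring in Proposition~\ref{computation}. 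Comparing the two sums, the error term equals $\dfrac{(-1)^{|I|(\frac r2+1)}n(a_J)}{4\{1\}^{|I|-2}}$ times
$$\sum_{\epsilon_I}\bigg(\sum_{(a_I,k_1,k_2)\in\mathrm{NH}}g_r^{\epsilon_I}(a_I,k_1,k_2)+\sum_{(\frac{2\pi a_I}{r},\frac{2\pi k_1}{r},\frac{2\pi k_2}{r})\in\mathrm{D_H}\setminus\overline{\mathrm{D_H^\delta}}}\big(1-\psi\big)g_r^{\epsilon_I}(a_I,k_1,k_2)\bigg),$$
where $\mathrm{NH}$ is the set of multi-even $a_I$ and integers $k_1,k_2$ in the summation range of Proposition~\ref{computation} for which $(a_1,\dots,a_6)$ is $r$-admissible but \emph{not} of the hyperideal type (so $\psi\equiv 0$ there and $g_r^{\epsilon_I}$ is built from the variant of $U_r$ prescribed by Lemma~\ref{factorial}). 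The prefactor is polynomially bounded in $r$ (since $n(a_J)$ is and $|\{1\}|=2\sin\frac{2\pi}{r}\asymp\frac1r$), there are only $2^{|I|}$ multi-signs, and each inner sum has $O(r^{|I|+2})$ terms; so it suffices to show that each $|g_r^{\epsilon_I}(a_I,k_1,k_2)|$ occurring above is $O\big(e^{\frac{r}{2\pi}(2\mathrm{Vol}(\Delta(\theta_I;\theta_J))-\epsilon'')}\big)$ for some fixed $\epsilon''>0,$ after which the statement follows by absorbing the polynomial factor into a smaller $\epsilon'.$

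The key uniform estimate is this: at any lattice point all $\alpha_i$ are real, so $|g_r^{\epsilon_I}(a_I,k_1,k_2)|=e^{\frac r{4\pi}\mathrm{Im}\,\mathcal W_r^{\epsilon_I}(\frac{2\pi a_I}{r},\frac{2\pi k_1}{r},\frac{2\pi k_2}{r})},$ and the arguments of all the $\varphi_r$'s appearing in $U_r$ (in either its hyperideal or its non-hyperideal form) are then of the shape $\frac{m\pi}{r}$ with $m\in\mathbb Z$ avoiding the poles of $\varphi_r,$ so $g_r^{\epsilon_I}$ is finite. By Lemma~\ref{converge}, $\mathcal W_r^{\epsilon_I}=\mathcal W^{\epsilon_I}-\frac{8\pi\sqrt{-1}}{r}\log\big(\frac r2\big)+O\big(\frac1r\big)$ — the $O\big(\frac1{r^2}\big)$ in~\eqref{conv1} only worsens to $O\big(\frac1r\big)$ near the faces of the admissible region and near the wall separating the hyperideal and non-hyperideal loci, where some $\varphi_r$-arguments shrink to $\frac{\pi}{r},$ which is harmless after multiplying by $\frac r{4\pi}$ — and the same holds for the non-hyperideal variant of $U_r.$ Consequently
$$|g_r^{\epsilon_I}(a_I,k_1,k_2)|\leqslant C\,r^{-2}\,e^{\frac r{4\pi}\mathrm{Im}\,\mathcal W^{\epsilon_I}(\frac{2\pi a_I}{r},\frac{2\pi k_1}{r},\frac{2\pi k_2}{r})}$$
on $\mathrm{D_H},$ with the corresponding bound in terms of the non-hyperideal analogue of $\mathcal W^{\epsilon_I}$ on the non-hyperideal locus.

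For the transition-band terms this concludes the argument just as in Proposition~\ref{other} (specialized to $m_I=0,$ $n_1=n_2=0,$ and restricted to $\mathrm{D_H}\setminus\overline{\mathrm{D_H^\delta}}$ in place of all of $\mathrm{D_H}$): by Lemma~\ref{absm} the function $\mathrm{Im}\,\mathcal W^{\epsilon_I}$ attains its maximum $4\mathrm{Vol}(\Delta_{(0,\dots,0)})$ on $\overline{\mathrm{D_H}}$ only at the interior point $\big(\pi,\dots,\pi,\frac{7\pi}{4},\frac{7\pi}{4}\big),$ so by compactness $\max_{\overline{\mathrm{D_H}}\setminus\mathrm{D_H^\delta}}\mathrm{Im}\,\mathcal W^{\epsilon_I}<4\mathrm{Vol}(\Delta_{(0,\dots,0)});$ since all the dihedral angles are small, $\mathrm{Vol}(\Delta(\theta_I;\theta_J))$ is close to $\mathrm{Vol}(\Delta_{(0,\dots,0)})$ (the two agree at zero angles, by Proposition~\ref{crit} and Lemma~\ref{absm}, and $\mathrm{Vol}$ is continuous in the angles), hence $\max_{\overline{\mathrm{D_H}}\setminus\mathrm{D_H^\delta}}\mathrm{Im}\,\mathcal W^{\epsilon_I}\leqslant 4\mathrm{Vol}(\Delta(\theta_I;\theta_J))-\epsilon'$ for some $\epsilon'>0,$ which together with the bound of the previous paragraph controls the transition-band part.

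The remaining piece — the sum over the non-hyperideal lattice points — is the one I expect to require the most care. One must establish that the non-hyperideal analogue of $\mathrm{Im}\,\mathcal W^{\epsilon_I}$, equivalently $\lim_r\frac{4\pi}{r}\log|g_r^{\epsilon_I}|$ computed on the non-hyperideal locus, is $\leqslant 4\mathrm{Vol}(\Delta_{(0,\dots,0)})$ over the whole admissible region, with equality only at $\big(\pi,\dots,\pi,\frac{7\pi}{4},\frac{7\pi}{4}\big);$ this should follow from Costantino's analysis of the exponential growth rate of quantum $6j$-symbols~\cite{C} together with the Murakami--Yano formula~\cite{MY,U} and the same Lobachevsky-function identity that yields~\eqref{UV} and Lemma~\ref{absm}, now run with the non-hyperideal conventions of Lemma~\ref{factorial}. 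The non-hyperideal locus is a closed subset of the compact admissible region lying at a positive distance from $\big(\pi,\dots,\pi,\frac{7\pi}{4},\frac{7\pi}{4}\big)$ — at the center all the inequalities defining the hyperideal type are strict — so the supremum of the potential there is $<4\mathrm{Vol}(\Delta_{(0,\dots,0)}),$ hence $\leqslant 4\mathrm{Vol}(\Delta(\theta_I;\theta_J))-\epsilon'$ once the angles are small. Combining the two estimates gives the bound on the error term. The crux is the bookkeeping of which factorials need the shifted branch of Lemma~\ref{factorial} on the non-hyperideal locus and the verification that the resulting potential is strictly dominated, away from the central point, by the volume of the regular ideal octahedron — the counterpart here of the $6j$-symbol estimates underlying~\cite{C} and the proof of the Chen--Yang volume conjecture.
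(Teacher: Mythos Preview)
Your decomposition of the error term and the overall strategy match the paper's, but the paper's execution is sharper in two places where you have to work hard or hedge.

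First, for the pointwise bound on $|g_r^{\epsilon_I}|$ you invoke Lemma~\ref{converge} and then argue that the $O(1/r^2)$ remainder ``only worsens to $O(1/r)$'' at lattice points where some $\varphi_r$-argument shrinks to $\frac{\pi}{r}.$ This is delicate: the explicit correction $\frac{2\pi^2 e^{2\sqrt{-1}z}}{3(1-e^{2\sqrt{-1}z})}\frac{1}{r^2}$ in \eqref{conv1} indeed degrades to $O(1/r)$ there, but the tacit $O(1/r^4)$ tail has the same singular denominators and you have not controlled it. The paper sidesteps this entirely by using Lemma~\ref{est}, the estimate $\log|\{n\}!|=-\frac{r}{2\pi}\Lambda\big(\frac{2n\pi}{r}\big)+O(\log r),$ which is uniform over all integers $0<n<r$ and directly yields $|g_r^{\epsilon_I}(a_I,k_1,k_2)|\leqslant r^{O(1)}e^{\frac{r}{2\pi}\cdot 2V(\alpha,\xi_1)+2V(\alpha,\xi_2)}$ (and its non-hyperideal analogue) without any boundary analysis. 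This is the clean tool here; your route can presumably be repaired, but Lemma~\ref{est} is what you want.

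Second, for the non-hyperideal lattice points you correctly identify that one must show the potential on $\mathrm{D_A}\setminus\mathrm{D_H}$ stays strictly below $4\mathrm{Vol}(\Delta_{(0,\dots,0)}),$ and you gesture at Costantino and Murakami--Yano. This is in fact a nontrivial estimate: the paper packages both the boundary strip and the non-hyperideal region into a single quantity $M_{\alpha_J}=\max\big\{V(\alpha,\xi_1)+V(\alpha,\xi_2):(\alpha_I,\xi_1,\xi_2)\in\partial\mathrm{D_H}\cup(\mathrm{D_A}\setminus\mathrm{D_H})\big\}$ and invokes \cite[Sections~3 \& 4]{BDKY} for the inequality $M_{\alpha_J}<2v_8.$ That reference carries out precisely the ``bookkeeping of which factorials need the shifted branch'' you anticipate and proves the domination by the octahedral volume on the non-hyperideal locus; it is not a one-line consequence of \cite{C} and \cite{MY,U}. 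With $M_{\alpha_J}<2v_8$ in hand, continuity in $(\theta_I,\theta_J)$ gives $M_{\alpha_J}<2\mathrm{Vol}(\Delta(\theta_I;\theta_J))$ for small angles, and one then chooses $\delta$ so that $\mathrm{D_H^\delta}$ avoids a neighborhood of the bad set; Lemma~\ref{est} finishes the pointwise bound on every term with $\big(\frac{2\pi a_I}{r},\frac{2\pi k_1}{r},\frac{2\pi k_2}{r}\big)\notin\mathrm{D_H^\delta}.$ So your outline is right, but the two ingredients you should plug in are Lemma~\ref{est} for uniformity and the \cite{BDKY} estimate for the non-hyperideal region.
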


For the proof we need the following estimate, which first appeared in \cite[Proposition 8.2]{GL} for $q=e^{\frac{\pi \sqrt{-1}}{r}},$ and for the root $q=e^{\frac{2\pi \sqrt{-1}}{r}}$ in \cite[Proposition 4.1]{DK}.

\begin{lemma}\label{est}
 For any integer $0<n<r$ and at $q=e^{\frac{2\pi \sqrt{-1}}{r}},$
 $$ \log\left|\{n\}!\right|=-\frac{r}{2\pi}\Lambda\left(\frac{2n\pi}{r}\right)+O\left(\log(r)\right).$$
\end{lemma}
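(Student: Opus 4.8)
The plan is to reduce the statement to a Riemann–sum estimate for the Lobachevsky function. First I would use the elementary factorization of $\{n\}!$: at $q=e^{2\pi\sqrt{-1}/r}$ we have $\{k\}=q^{k}-q^{-k}=2\sqrt{-1}\sin\frac{2\pi k}{r}$, so $|\{k\}|=\bigl|2\sin\frac{2\pi k}{r}\bigr|$, and since $r$ is odd and $0<k\le n<r$ none of these vanish; hence
$$\log\bigl|\{n\}!\bigr|=\sum_{k=1}^{n}\log\Bigl|2\sin\tfrac{2\pi k}{r}\Bigr|.$$
By the definition $\Lambda(\theta)=-\int_{0}^{\theta}\log|2\sin t|\,dt$ together with $\Lambda(0)=0$, the quantity $\frac{2\pi}{r}\sum_{k=1}^{n}\log\bigl|2\sin\frac{2\pi k}{r}\bigr|$ is the right-endpoint Riemann sum, with mesh $\frac{2\pi}{r}$, of $\int_{0}^{2\pi n/r}\log|2\sin t|\,dt=-\Lambda\bigl(\frac{2\pi n}{r}\bigr)$. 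So the lemma is equivalent to the estimate $\bigl|\frac{2\pi}{r}\sum_{k=1}^{n}\log|2\sin\frac{2\pi k}{r}|+\Lambda(\frac{2\pi n}{r})\bigr|=O\bigl(\frac{\log r}{r}\bigr)$, which after multiplying by $\frac{r}{2\pi}$ gives exactly $\log|\{n\}!|=-\frac{r}{2\pi}\Lambda(\frac{2\pi n}{r})+O(\log r)$.

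For that Riemann–sum estimate I would argue as follows. The integrand $g(t)=\log|2\sin t|$ is real-analytic away from $\pi\mathbb{Z}$, where it has integrable logarithmic singularities, and it is piecewise monotone; since $0<\frac{2\pi n}{r}<2\pi$, the interval $[0,\frac{2\pi n}{r}]$ meets only boundedly many maximal monotone stretches of $g$. On each such stretch the right-endpoint rectangle rule applied to a monotone function has error bounded by $\frac{2\pi}{r}$ times the oscillation of $g$ over the grid points in that stretch, and this telescopes; the oscillation is $O(\log r)$ because at a grid point $\frac{2\pi k}{r}$ the value $g(\frac{2\pi k}{r})$ never drops below $\log\bigl|2\sin\frac{\pi}{r}\bigr|=-O(\log r)$ (the grid stays at distance $\ge\frac{\pi}{r}$ from $\pi\mathbb{Z}$) and never exceeds $\log 2$. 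Summing the boundedly many contributions gives the required $O(\frac{\log r}{r})$. The one delicate point, and precisely what forces the final error to be $O(\log r)$ rather than $O(1)$, is this accumulation of rectangle-rule error in the $O(1)$ terms sitting next to the logarithmic singularities; everything else is routine.

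An alternative route, matching the machinery already developed in the paper, is to take logarithms of the closed formulas in Lemma \ref{factorial}, pass to real parts via $\mathrm{Re}\bigl(\frac{r}{4\pi\sqrt{-1}}w\bigr)=\frac{r}{4\pi}\mathrm{Im}\,w$, and substitute Lemma \ref{converge} together with $\mathrm{Li}_2(e^{2\sqrt{-1}\theta})=\frac{\pi^{2}}{6}+\theta(\theta-\pi)+2\sqrt{-1}\Lambda(\theta)$ from (\ref{dilogLob}); the polynomial-in-$\frac{2\pi n}{r}$ terms and the $\mathrm{Li}_2(1)$ terms are real and cancel out of the imaginary part, the $\log(r/2)$ term in Lemma \ref{converge}(3) contributes only $O(\log r)$, and what survives is $-\frac{r}{2\pi}\Lambda(\frac{2\pi n}{r}+\frac{\pi}{r})+O(\log r)$ (using the $\pi$-periodicity of $\Lambda$ in the range $n\ge\frac{r-1}{2}$), after which $\Lambda(\frac{2\pi n}{r}+\frac{\pi}{r})$ is replaced by $\Lambda(\frac{2\pi n}{r})$ at cost $O(\frac{\log r}{r})$ since $\Lambda'=-\log|2\sin(\cdot)|$ is only logarithmically singular. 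In this second route one must also dispose of the finitely many values of $n$ for which neither formula of Lemma \ref{factorial} has its argument strictly inside $(0,\pi)$, namely $n=\frac{r-1}{2}$ and $n=r-1$, by peeling off a single factor $\{n\}!=\{n\}\{n-1\}!$ and using $\log|\{n\}|=\log|2\sin\frac{2\pi n}{r}|=O(\log r)$.
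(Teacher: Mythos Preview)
Your argument is correct. Note, however, that the paper does not give its own proof of this lemma: it simply cites Garoufalidis--L\^e \cite[Proposition~8.2]{GL} for the root $q=e^{\pi\sqrt{-1}/r}$ and Detcherry--Kalfagianni \cite[Proposition~4.1]{DK} for $q=e^{2\pi\sqrt{-1}/r}$. Your first approach---rewriting $\log|\{n\}!|$ as $\sum_{k=1}^{n}\log\bigl|2\sin\frac{2\pi k}{r}\bigr|$ and comparing with the Riemann sum for $-\Lambda$---is exactly the argument those references use, and your handling of the logarithmic singularities (grid points stay at distance $\geqslant\pi/r$ from $\pi\mathbb Z$ because $r$ is odd, giving oscillation $O(\log r)$ on each monotone stretch) is the standard and correct way to produce the $O(\log r)$ error. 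Your second route via Lemmas~\ref{factorial} and~\ref{converge} also works and is a nice internal-to-the-paper alternative, though it is not what the cited sources do.
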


\begin{proof}[Proof of Proposition \ref{error} ]
For a fixed $\alpha_J=(\alpha_j)_{j\in J},$ let 
\begin{equation*}
M_{\alpha_J}=\max\big\{V(\alpha_1,\dots,\alpha_6,\xi_1)+V(\alpha_1,\dots,\alpha_6,\xi_2)\ \big|\ (\alpha_I,\xi_1,\xi_2)\in\partial \mathrm {D_H}\cup\big(\mathrm {D_A}\setminus \mathrm{D_H}\big)\big\}
\end{equation*}
where $V$ is as defined in (\ref{V}). Then by \cite[Sections 3 \& 4]{BDKY}, 
$$M_{\alpha_J}<2v_8=2\mathrm{Vol}(\Delta_{(0,\dots,0)});$$
and by continuity, if $\epsilon$ is sufficiently small and $\{\theta_1,\dots,\theta_6\}$ are less than $\epsilon,$ then
$$M_{\alpha_J}<2\mathrm{Vol}(\Delta{(\theta_I;\theta_J)}).$$

Now by Lemma \ref{est} and the continuity, for $\epsilon'=\frac{2\mathrm{Vol}(\Delta{(\theta_I;\theta_J)})-M_{\alpha_J}}{2},$ we can choose a sufficiently small $\delta>0$ so that if $\big(\frac{2\pi a_I}{r},\frac{2\pi k_1}{r},\frac{2\pi k_2}{r}\big)\notin \mathrm{D_H^\delta},$ then
$$\Big|g_r^{\epsilon_I}(a_I, k_1, k_2)\Big|<O\Big(e^{\frac{r}{2\pi}(M_{\alpha_J}+\epsilon')}\Big)=O\Big(e^{\frac{r}{2\pi}(2\mathrm{Vol}(\Delta{(\theta_I;\theta_J)})-\epsilon')}\Big).$$
Let $\psi$ be the bump function supported on $(\mathrm{D_H}, \mathrm{D_H^{\delta}}).$ Then the error term in Proposition \ref{Poisson} is less than $O\big(e^{\frac{r}{2\pi}(2\mathrm{Vol}(\Delta{(\theta_I;\theta_J)})-\epsilon')}\big).$
\end{proof}


\subsection{Proof of Theorem \ref{main}}\label{pf}

\begin{proof}[Proof of Theorem \ref{main}] Let $\epsilon>0$ be sufficiently small so that the conditions of Propositions \ref{critical}, \ref{other} and  \ref{error} and of Corollary \ref{5.8} are satisfied, and suppose $\{\beta_i\}_{i\in I}$ and $\{\alpha_j\}_{j\in J}$ are all in $(\pi-\epsilon, \pi+\epsilon).$

By Propositions \ref{4.2}, \ref{Poisson}, \ref{critical}, \ref{other} and  \ref{error}, 
\begin{equation*}
\begin{split}
\mathrm{\widehat {Y}}_r(b_I; a_J)=&\frac{(-1)^{|I|\big(\frac{r}{2}+1\big)}\cdot n(a_J)}{4\{1\}^{|I|-2}}\Big(\sum_{\epsilon_I\in\{1,-1\}^I}\widehat{ f_r^{\epsilon_I}}(0,\dots,0)\Big)\Big(1+O\big(e^{\frac{r}{2\pi}{(-\epsilon')}}\big)\Big)\\
=&\frac{(-1)^{|I|\big(\frac{r}{2}+1\big)}\cdot n(a_J)}{4\{1\}^{|I|-2}}\bigg( \sum_{\epsilon_I\in\{1,-1\}^I}\frac{C^{\epsilon_I}(z^{\epsilon_I})}{\sqrt{-\det\mathrm{Hess}\Big(\frac{\mathcal W^{\epsilon_I}(z^{\epsilon_I})}{4\pi\sqrt{-1}}\Big)}}\bigg)  e^{\frac{r}{2\pi}2\mathrm{Vol}(\Delta(\theta_I;\theta_J))}\Big( 1 + O \Big( \frac{1}{r} \Big) \Big);
\end{split}
\end{equation*}
and by Corollary \ref{5.8},
 $$\sum_{\epsilon_I\in\{1,-1\}^I}\frac{C^{\epsilon_I}(z^{\epsilon_I})}{\sqrt{-\det\mathrm{Hess}\Big(\frac{\mathcal W^{\epsilon_I}(z^{\epsilon_I})}{4\pi\sqrt{-1}}\Big)}}\neq 0,$$
which completes the proof.
\end{proof}


\section{Numerical evidence for Conjecture \ref{conj}}\label{appendix}

In this appendix we show numerical evidence supporting Conjecture \ref{conj}. We provide calculations for two deeply truncated tetrahedra of type $((1),(23456))$ and one deeply truncated tetrahedron of every other type in Figure \ref{deep}. All the calculations are performed with the Mathematica software.

We provide plots for:
\begin{enumerate}[(1)]
 \item Every tetrahedron with five angles equal to $0$ and a deeply truncated edge with angle between $0$ and $\frac{\pi}{2}$ at $r=2017.$
 \item Two tetrahedra with a single deeply truncated edge.
 \item Two tetrahedra with two deeply truncated edges (one per type).
 \item Three tetrahedra with three deeply truncated edges (one per type).
\end{enumerate}

Because of Proposition \ref{prop:duality} this actually accounts for all partitions $(I,J)$ up to relabeling.

In every case we show, if a tetrahedron has angle $\alpha$ at an edge, the sequence of colorings we choose for that edge is $\left\lfloor \frac{r}{4\pi}(\pi-\alpha)\right\rfloor.$

The fact that the angles and lengths we list correspond to hyperbolic tetrahedra can be checked directly from the Gram matrix using Proposition \ref{prop:criterion}.

\vspace{.5cm}
\begin{minipage}{.45\textwidth}
  \includegraphics[width=.95\textwidth]{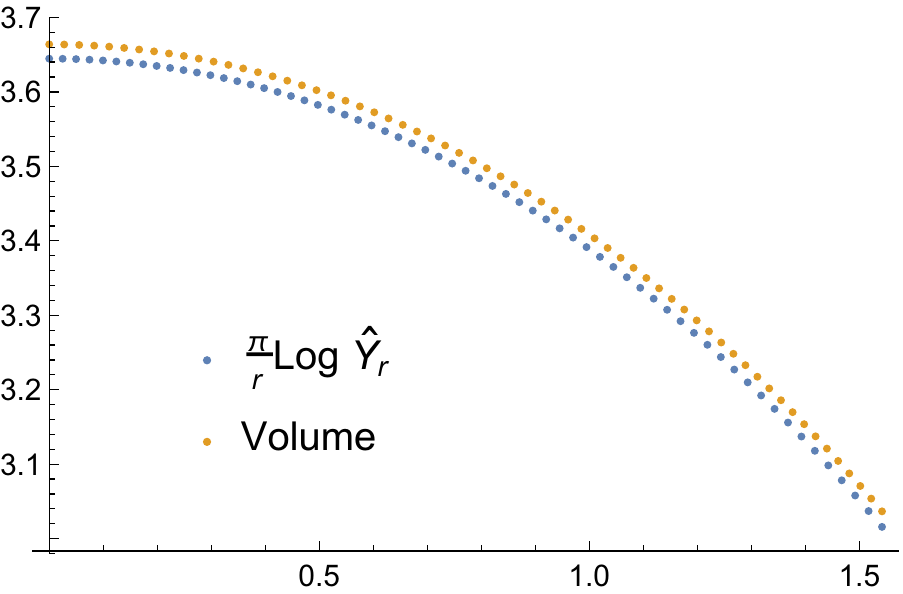}
\end{minipage}
\begin{minipage}{.45\textwidth}
 \begin{flushright}
 {\tabulinesep=1.2mm
 \begin{tabu}{|C{2.3cm}|C{4.5cm}|}
 \hline
  Type & $\left( (1),(23456)\right)$\\  \hline
  Angles & $(\alpha),\left(0,0,0,0,0\right)$ \\ \hline Edge lengths & $l$  \\ \hline Error & $<0.1\%$\\
  \hline
 \end{tabu}}

\end{flushright}
\end{minipage}

\vspace{1.5cm}

\begin{minipage}{.45\textwidth}
  \includegraphics[width=.95\textwidth,left]{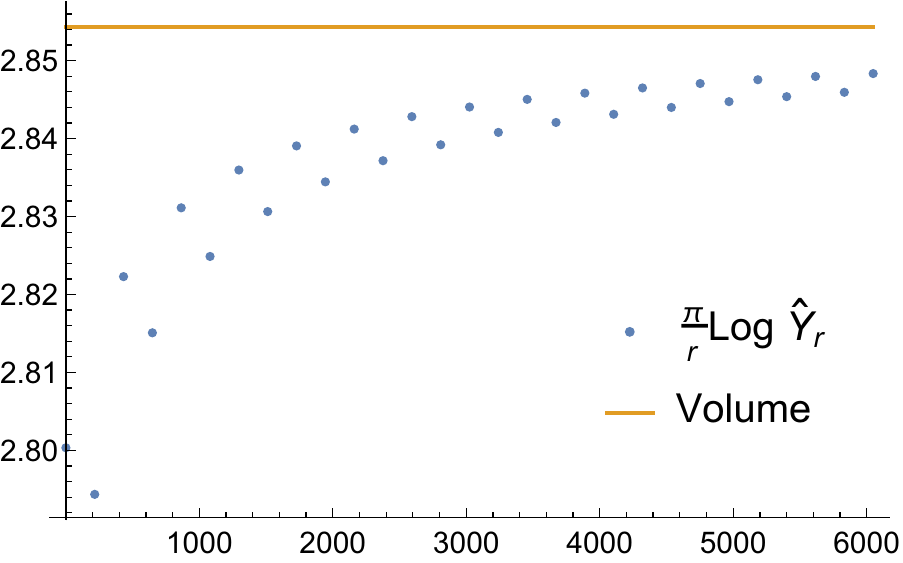}
\end{minipage}
\begin{minipage}{.45\textwidth}
 \begin{flushright}
 {\tabulinesep=1.2mm
 \begin{tabu}{|C{2.3cm}|C{4.5cm}|}
 \hline
  Type & $\left( (1),(23456)\right)$\\  \hline
  Angles & $(0),\left(\frac{\pi}{5},\frac{\pi}{4},\frac{\pi}{4},\frac{\pi}{4},\frac{\pi}{4}\right)$ \\ \hline Edge lengths & $0$ \\  \hline Volume & $2.8543$ \\ \hline $\frac{\pi}{6049}\log \widehat{\mathrm Y}_{6049}$ &  $2.84835$ \\ \hline Error & $0.2\%$\\
  \hline
 \end{tabu}}

\end{flushright}
\end{minipage}

\vspace{1.5cm}

\begin{minipage}{.45\textwidth}
  \includegraphics[width=.95\textwidth]{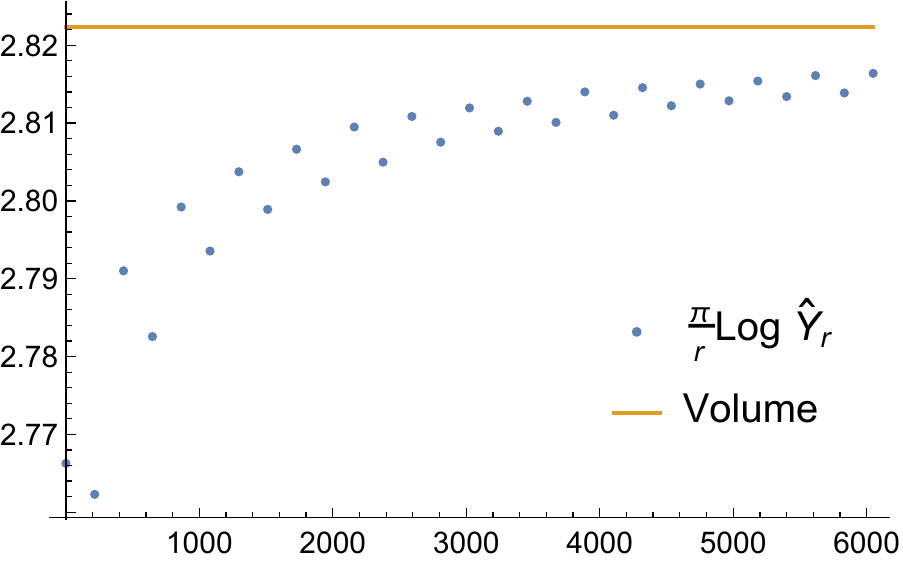}
\end{minipage}
\begin{minipage}{.45\textwidth}
 \begin{flushright}
 {\tabulinesep=1.2mm
 \begin{tabu}{|C{2.3cm}|C{4.5cm}|}
 \hline
  Type & $\left( (1),(23456)\right)$\\  \hline
  Angles & $(0.4005),\left(\frac{\pi}{5},\frac{\pi}{4},\frac{\pi}{4},\frac{\pi}{4},\frac{\pi}{4}\right)$ \\ \hline Edge lengths & $0.3214$ \\  \hline Volume & $2.8223$ \\ \hline $\frac{\pi}{6049}\log \widehat{\mathrm Y}_{6049}$ & $2.8163$ \\ \hline Error & $0.2\%$\\
  \hline
 \end{tabu}}

\end{flushright}
\end{minipage}

\vspace{1.5cm}

\begin{minipage}{.45\textwidth}
  \includegraphics[width=.95\textwidth,left]{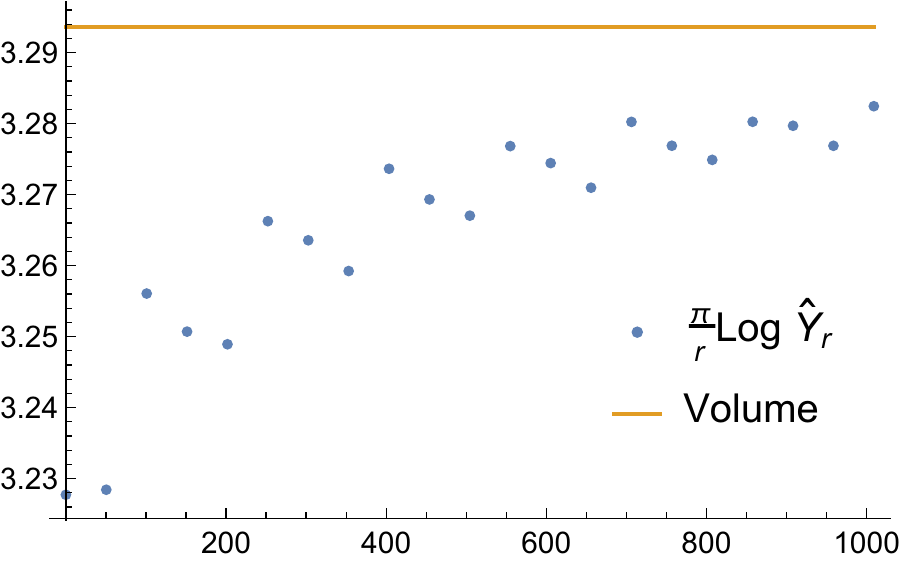}
\end{minipage}
\begin{minipage}{.45\textwidth}
 \begin{flushright}
 {\tabulinesep=1.2mm
 \begin{tabu}{|C{2.3cm}|C{5cm}|}
\hline
  Type & $\left( (12),(3456)\right)$\\  \hline
  Angles & $(0.1638,0.2160),\left(\frac{\pi}{5},\frac{\pi}{6},\frac{\pi}{5},\frac{\pi}{6}\right)$ \\ \hline Edge lengths &  $0.1486, 0.2024$ \\  \hline Volume & $3.2937$ \\ \hline $\frac{\pi}{1009}\log \widehat{\mathrm Y}_{1009}$ & $3.2825$ \\ \hline Error & $0.3\%$\\
  \hline
 \end{tabu}}
\end{flushright}
\end{minipage}

\vspace{1.5cm}

\begin{minipage}{.45\textwidth}
  \includegraphics[width=.95\textwidth,left]{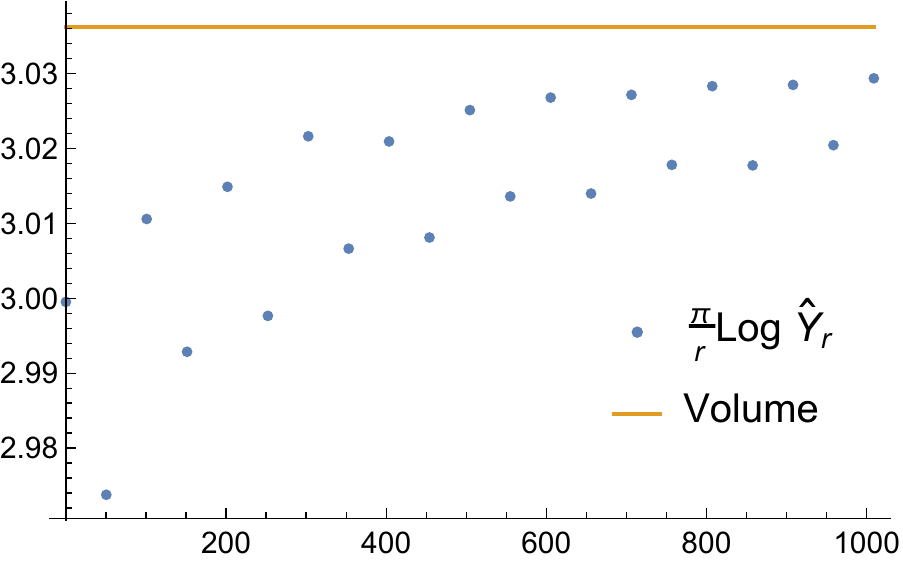}
\end{minipage}
\begin{minipage}{.45\textwidth}
 \begin{flushright}
 {\tabulinesep=1.2mm
 \begin{tabu}{|C{2.3cm}|C{5cm}|}
 \hline
  Type & $\left( (14),(2356)\right)$\\  \hline
  Angles & $(0.3862,0.2302),\left(\frac{\pi}{4},\frac{\pi}{5},\frac{\pi}{4},\frac{\pi}{4}\right)$ \\ \hline Edge lengths & $0.2842, 0.1673$ \\  \hline Volume & $3.0362$ \\ \hline $\frac{\pi}{1009}\log \widehat{\mathrm Y}_{1009}$ & $3.0293$ \\ \hline Error & $0.2\%$\\
  \hline
 \end{tabu}}
\end{flushright}
\end{minipage}

\vspace{1.5cm}

\begin{minipage}{.45\textwidth}
  \includegraphics[width=.95\textwidth,left]{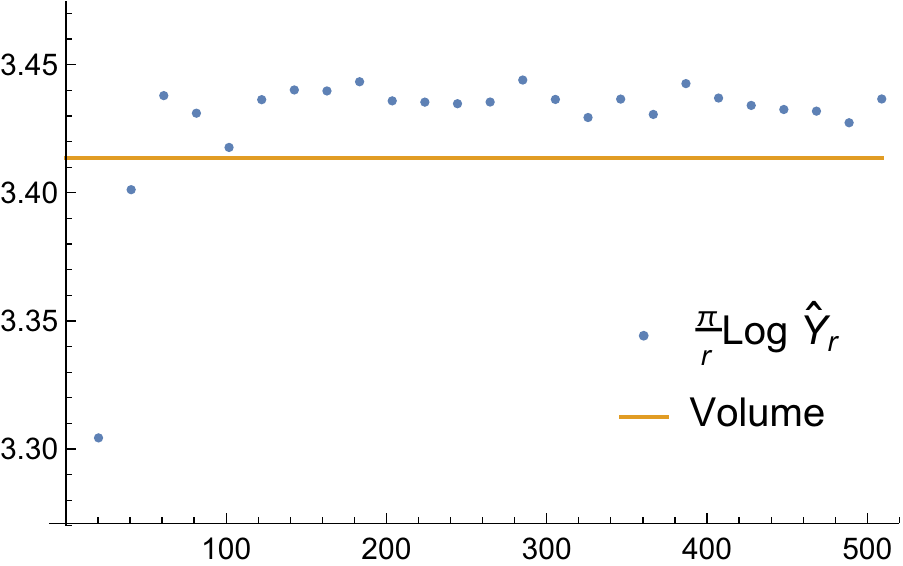}
\end{minipage}
\begin{minipage}{.45\textwidth}
 \begin{flushright}
 {\tabulinesep=1.2mm
 \begin{tabu}{|C{2.3cm}|C{6cm}|}
 \hline
  Type & $\left( (123),(456)\right)$\\  \hline
  Angles & $(0.1282,0.2060,0.2955),\left(\frac{\pi}{8},\frac{\pi}{6},\frac{\pi}{5}\right)$ \\ \hline Edge lengths &  $0.1210, 0.2008,0.29683$  \\  \hline Volume & $3.4136$ \\ \hline $\frac{\pi}{509}\log \widehat{\mathrm Y}_{509}$ & $3.4366$ \\ \hline Error: & $0.7\%$\\
  \hline
 \end{tabu}}
\end{flushright}
\end{minipage}

\vspace{1.5cm}

\begin{minipage}{.45\textwidth}
  \includegraphics[width=.95\textwidth,left]{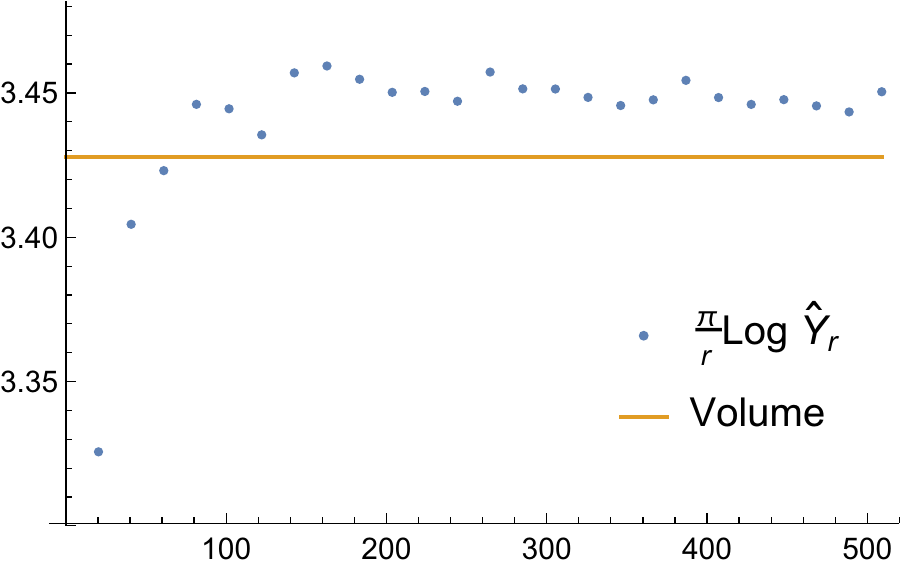}
\end{minipage}
\begin{minipage}{.45\textwidth}
 \begin{flushright}
 {\tabulinesep=1.2mm
 \begin{tabu}{|C{2.3cm}|C{6cm}|}
 \hline
  Type & $\left( (124),(356)\right)$\\  \hline
  Angles & $(0.1042,0.1802,0.1339),\left(\frac{\pi}{7},\frac{\pi}{6},\frac{\pi}{5}\right)$ \\ \hline Edge lengths &  $0.0931, 0.1743,0.1203$ \\  \hline Volume & $3.4277$ \\ \hline $\frac{\pi}{509}\log \widehat{\mathrm Y}_{509}$ & $3.4504$ \\ \hline Error: & $0.7\%$\\
  \hline
 \end{tabu}}
\end{flushright}
\end{minipage}

\vspace{1.5cm}

\begin{minipage}{.45\textwidth}
  \includegraphics[width=.95\textwidth,left]{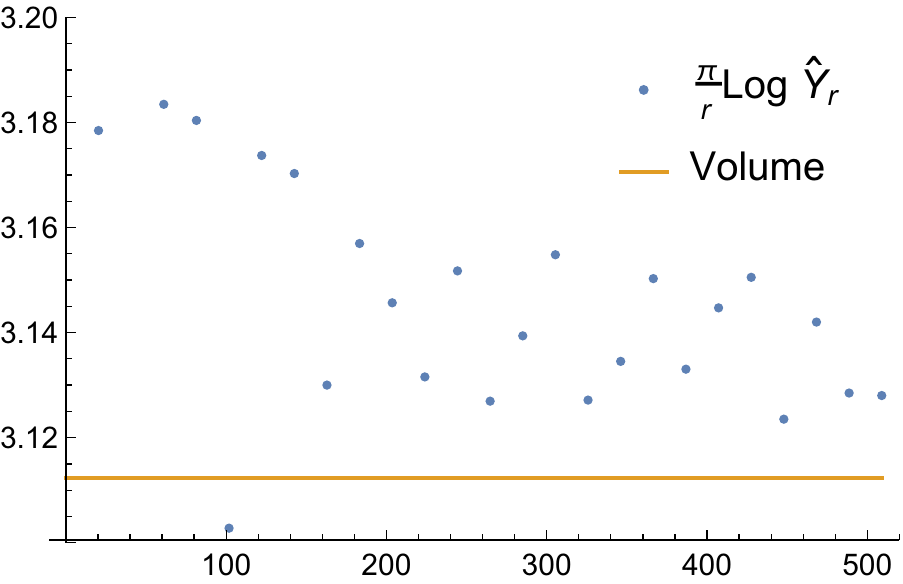}
\end{minipage}
\begin{minipage}{.45\textwidth}
 \begin{flushright}
 {\tabulinesep=1.2mm
 \begin{tabu}{|C{2.3cm}|C{6cm}|}
 \hline
  Type & $\left( (126),(345)\right)$\\  \hline
  Angles & $(0.4041,0.5014,0.4064),\left(\frac{2\pi}{13},\frac{3\pi}{13},\frac{4\pi}{17}\right)$ \\ \hline Edge lengths &  $0.4284, 0.5045,0.3817$  \\  \hline Volume & $3.1123$ \\ \hline $\frac{\pi}{509}\log \widehat{\mathrm Y}_{509}$ & $3.1280$ \\ \hline Error: & $0.5\%$\\
  \hline
 \end{tabu}}
\end{flushright}
\end{minipage}

 \noindent 
Giulio Belletti\\
Scuola Normale Superiore\\
Pisa, Italy\\ 
(giulio.belletti@sns.it)
\\

\noindent
Tian Yang\\
Department of Mathematics\\  Texas A\&M University\\
College Station, TX 77843, USA\\
(tianyang@math.tamu.edu)


\begin{thebibliography}{99}
\bibitem{BAR} J.W. Barrett, {\em Geometrical measurements in three-dimensional quantum gravity}, Int. J. Modern Phys. A 18.supp02 (2003): 97--113.

\bibitem{BEL} G. Belletti, {\em A maximum volume conjecture for hyperbolic polyhedra}, arXiv:2002.01904.

\bibitem{BDKY} G. Belletti, R. Detcherry, E. Kalfagianni, and T. Yang, {\em Growth of quantum 6j-symbols and applications to the Volume Conjecture}, to appear in J. Differential Geom. 

\bibitem{BHMV} C. Blanchet, N. Habegger, G. Masbaum and P. Vogel, {\em Three-manifold invariants derived from the Kauffman bracket}, Topology 31 (1992), no. 4, 685--699.

\bibitem{BB} X. Bao and F. Bonahon,  {\em Hyperideal polyhedra in hyperbolic 3-space}. Bull. Soc. Math. France 130 (2002), no. 3, 457--491.

\bibitem{CM} Q. Chen and J. Murakami, {\em Asymptotics of quantum $6j$ symbols}, preprint, arXiv:1706.04887.


\bibitem{CY} Q. Chen and T. Yang, {\em Volume Conjectures for the Reshetikhin-Turaev and the Turaev-Viro Invariants}, Quantum
Topol. 9 (2018), no. 3, 419--460.

\bibitem {C} F. Costantino, {\em $6j$-symbols, hyperbolic structures and the volume conjecture}, Geom. Topol. 11 (2007), 1831--1854.


\bibitem{DK} R. Detcherry and E. Kalfagianni, {\em Gromov norm and Turaev-Viro invariants of 3-manifolds}, to appear in Ann. Sci. de l'Ecole Normale Sup..

\bibitem{DKY} R. Detcherry, E. Kalfagianni and T. Yang, {\em Turaev-Viro invariants, colored Jones polynomials and volume}, Quantum Topol. 9 (2018), no. 4, 775--813.


\bibitem{F} L. Faddeev, {\em Discrete Heisenberg-Weyl group and modular group}, Lett. Math. Phys. 34 (1995), no. 3, 249--254.

\bibitem{FKV} L. Faddeev, R. Kashaev and A. Volkov, {\em Strongly coupled quantum discrete Liouville theory}, I. Algebraic approach and duality. Comm. Math. Phys. 219 (2001), no. 1, 199--219.



\bibitem{GL} S. Garoufalidis and T. Le,  {\em Asymptotics of the colored Jones function of a knot}, Geom. Topol. 15 (2011), no. 4, 2135--2180.

\bibitem{GL2} R. Guo and F. Luo, {\em Rigidity of polyhedral surfaces II}, Geom. Topol. 13 (2009), no. 3, 1265--1312.

\bibitem{KL} L. Kauffman and S. Lins, {\em Temperley-Lieb recoupling theory and invariants of 3-manifolds} (1994), No. 134, Princeton University Press.

\bibitem{Ka1} R. Kashaev, {\em A link invariant from quantum dilogarithm}, Modern Phys. Lett. A 10 (1995), no. 19, 1409--1418.

\bibitem{Ka2} R. Kashaev, {\em The hyperbolic volume of knots from the quantum dilogarithm}, Lett. Math. Phys. 39 (1997), no. 3, 269--275.

\bibitem{KM} A. Kolpakov and J. Murakami, {\em Volume of a doubly truncated hyperbolic tetrahedron}, Aequationes Math. 85 (2013), no. 3, 449--463. 

\bibitem{KM2} A. Kolpakov and J. Murakami, {\em Combinatorial decompositions, Kirillov-Reshetikhin invariants, and the volume conjecture for hyperbolic polyhedra}, Exp. Math. 27 (2018), no. 2, 193--207. 

\bibitem{LIU} Z. Liu, {\em Quon language: surface algebras and Fourier duality}, Comm. Math. Phys. 366.3 (2019), 865--894.

\bibitem{L} D. London, {\em A note on matrices with positive definite real part}, Proc. Amer. Math. Soc. 82 (1981), no. 3, 322--324.


\bibitem{Luo} F. Luo {\em A combinatorial curvature flow for compact 3-manifolds with boundary}, Electron. Res. Announc. Amer. Math. Soc. 11 (2005), 12--20.

\bibitem{LY} F. Luo and T. Yang, {\em Volume and rigidity of hyperbolic polyhedral 3-manifolds}, J. Topol. 11 (2018), no. 1, 1--29.


\bibitem{MM} H. Murakami and J. Murakami, {\em The colored Jones polynomials and the simplicial volume of a knot}, Acta Math. 186 (2001), no. 1, 85--104.

\bibitem{MY} J. Murakami and M. Yano, {\em On the volume of hyperbolic and spherical tetrahedron},  Comm. Annal. Geom. 13 (2), 379--400 (2005).


\bibitem{O} T. Ohtsuki, {\em On the asymptotic expansion of the Kashaev invariant of the $5_2$ knot}, Quantum Topol. 7 (2016), no. 4, 669-735.






\bibitem{O2} T. Ohtsuki, {\em On the asymptotic expansion of the quantum $SU(2)$ invariant at $q=\exp(4\pi\sqrt{-1}/N)$ for closed hyperbolic 3-manifolds obtained by integral surgery along the figure-eight knot}, Algebr. Geom. Topol. 18 (2018), no. 7, 4187--4274. 




\bibitem{SS} E. Stein and R. Shakarchi, {\em Fourier analysis}, An introduction. Princeton Lectures in Analysis, 1. Princeton University Press, Princeton, NJ, 2003. xvi+311 pp. ISBN: 0-691-11384-X.

\bibitem{T} W. Thurston, {\em The geometry and topology of $3$-manifolds}, Princeton Univ. Math.
Dept. (1978). Available from http://msri.org/publications/books/gt3m/.

\bibitem{TV} V. Turaev and O. Viro, {\em State sum invariants of $3$-manifolds and quantum $6j$-symbols}, Topology 31 (1992), no. 4, 865--902.

\bibitem{U} Ushijima, {\em A volume formula for generalised hyperbolic tetrahedra}, Non-Euclidean geometries, 249--265, Math. Appl. (N. Y.), 581, Springer, New York, 2006.

\bibitem{W} E. Witten, {\em Quantum field theory and the Jones polynomial}, Comm. Math. Phys. 121 (3): 351--399.


\bibitem{WY} K. H. Wong and T. Yang, {\em On the Volume Conjecture for hyperbolic Dehn-filled $3$-manifolds along the figure-eight knot}, arXiv:2003.10053.

\bibitem{WY2} K. H. Wong and T. Yang, {\em Relative Reshetikhin-Turaev invariants, hyperbolic cone metrics and discrete Fourier transforms I }, arXiv:2008.05045.

\bibitem{Ya} T. Yang, {\em A relative version of the Turaev-Viro invariants and the volume of hyperbolic polyhedral 3-manifolds,} arXiv: 2009.04813.

\bibitem{Y} Y. Yokota, {\em Topological invariants of graphs in 3-space}, Topology 1996 Jan 1;35(1):77-87.

\bibitem{Z} D. Zagier, {\em The dilogarithm function}, Frontiers in number theory, physics, and geometry. II, 3--65, Springer, Berlin, 2007.



\end{thebibliography}
\end{document}